\documentclass[12pt,reqno]{amsart}
\setcounter{tocdepth}{4}
\usepackage[T1]{fontenc} 
\usepackage[margin=1in]{geometry}
\usepackage{graphicx}
\usepackage{amssymb,amsmath,amsthm}
\usepackage[all]{xy}
\usepackage{multicol}
\usepackage{amsfonts,amsmath, tikz}
\usetikzlibrary{topaths,calc}
\usepackage{leftidx}
\usepackage{tikz, tikz-cd}
\usetikzlibrary{topaths,calc}
\usepackage{mathrsfs}
\usepackage{lscape}
\usepackage{setspace}
\usepackage{wrapfig}

\newtheorem{theorem}{Theorem}[section]
\newtheorem{proposition}[theorem]{Proposition}

\newtheorem{lemma}[theorem]{Lemma}

\theoremstyle{definition}
\newtheorem{definition}[theorem]{Definition}
\newtheorem{remark}[theorem]{Remark}
\numberwithin{equation}{section}
\newtheorem{fig}[theorem]{Figure}
\newtheorem{example}[theorem]{Example}

 \usepackage[pagewise]{lineno}

\begin{document}
%\doublespacing 

\subjclass[2010]{53D05, 53D10,  	53D17.}
\keywords{b-symplectic, scattering, contact hypersurface, minimally degenerate Poisson.}
\title[Scattering Symplectic Geometry]{Symplectic, Poisson, and Contact Geometry on Scattering Manifolds} 

\author{Melinda Lanius}
\address{Department of Mathematics, University of Arizona, 617 North Santa Rita Avenue, Tucson, AZ 85721, USA}
\email{lanius@math.arizona.edu}

\begin{abstract}  We introduce scattering-symplectic manifolds, manifolds with a type of minimally degenerate Poisson structure that is not too restrictive so as to have a large class of examples, yet restrictive enough for standard Poisson invariants to be computable. This paper will demonstrate the potential of the scattering symplectic setting. In particular, we construct scattering-symplectic spheres and scattering symplectic gluings between strong convex symplectic fillings of a contact manifold. By giving an explicit computation of the Poisson cohomology of a scattering symplectic manifold, we also introduce a new method of computing Poisson cohomology. 
\end{abstract}
\maketitle

\section{Introduction}

To study the standard Euclidean symplectic form near infinity, one can use scattering-symplectic geometry. To be precise, let $(p_1,q_1,\dots,p_n,q_n)$ be the standard coordinates on $\mathbb{R}^{2n}$ and $$r =\sqrt{p_1^2+q_1^2+\dots+p_n^2+q_n^2}$$ be the radial coordinate. Away from the origin, define a function $x=\dfrac{1}{r}$. We can compactify $\mathbb{R}^{2n}$ with a sphere $\mathbb{S}^{2n-1}$ at infinity that is given by the zero set of $x$. Then in coordinates $t_i=p_ix$ and  $s_i=q_ix$ the standard Euclidean symplectic form  is expressible as \begin{align*}\omega &= \sum_{i=1}^n dp_i\wedge dq_i\\ &=\sum_{i=1}^n\dfrac{dx}{x^3}\wedge (s_idt_i-t_ids_i)+\frac{1}{x^2}dt_i\wedge ds_i\\ &=\frac{dx}{x^3}\wedge\alpha-\frac{d\alpha}{2x^2} \end{align*} where $\alpha= s_idt_i-t_ids_i$ defines the standard contact structure on the sphere $\mathbb{S}^{2n-1}$.  This compactified space equipped with $\omega$ is an example of a scattering-symplectic manifold (with boundary). In fact, we will later show that any scattering-symplectic form $\omega$ on any manifold $M$ will define a contact structure on the singular locus of $\omega$ (i.e. the zero set of $x$). Consqeuntly, scattering-symplectic geometry has several interesting connections to questions arising in contact geometry. If we invert $\omega$, then we have a Poisson bivector that is non-degenerate away from the boundary sphere. Generally, studying scattering-symplectic structures is equivalent to understanding a new class of mildly degenerate Poisson structures. The purpose of this paper is to introduce this class of mildly-degenerate Poisson structures and to discuss our approach for studying them. \\

\noindent {\bf Organization.} In Section 2, we discuss the general framework that we will use for our in-depth study of scattering-symplectic structures. While many of the results in this section are not novel, our approach to the proofs brings techniques from microlocal analysis to bear in the study of mildly-degenerate Poisson structures. As is frequently the case in mathematics, the right tools can make a seemingly hard problem quite tractable. Our design is that others will find these tools very useful in understanding their own structures of interest.  In Section 3, we delve into scattering-symplectic geometry. Below we recount some of the exciting features of this new area. 

\subsection{ \bf Why geometric microlocal analysis? Why scattering-symplectic geometry?} 

Our work was inspired by the $b$-symplectic setting, introduced by Victor Guillemin, Eva Miranda, and Ana Rita Pires \cite{guillemin2014symplectic}. In the same way a non-degenerate Poisson structure corresponds to a sympelctic form, Guillemin, Miranda, and Pires view a type of mildly degenerate Poisson structure as non-degenerate on the $b$-tangent bundle, a vector bundle introduced by Richard Melrose in his study of cylindrical metrics (See Chapter 7 \cite{Melrose95}). This so-called $b$-Poisson structure corresponds to a b-symplectic form. The $b$-setting, in particular the $b$-tangent bundle, has its genesis in a compactification of a manifold with a cylindrical end in that the $b$-tangent bundle is a Lie algebroid that extends the standard tangent bundle to this compactification.

\begin{fig} The left torus has a cylindrical end. The right has a Euclidean end.

\begin{center}

\includegraphics[scale=.35]{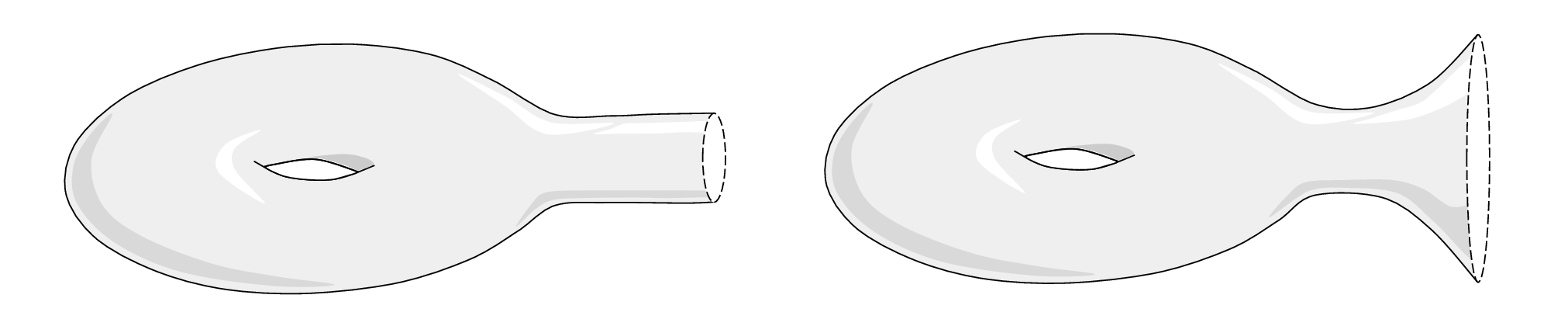}

{\bf $b$-manifold \hspace{15ex} sc-manifold \hspace{4ex} ~}

\end{center} 

\end{fig}

Scattering metrics, or metrics that behave like the funnel end of a Euclidean cone, are also extensively studied by Richard Melrose. His scattering-tangent bundle arises in a compactification of a manifold with a Euclidean end and is similarly acceptable for forming a correspondence between a non-degenerate Poisson structure on the scattering-tangent bundle and a scattering-sympelctic form.\\

\subsection{Main Results}~\\

\noindent {\bf \emph{Simple Poisson Structures on Spheres.}} Because every closed symplectic manifold has non-trivial degree 2 de Rham cohomology group, the only sphere that admits a symplectic structure is the 2 dimensional sphere $\mathbb{S}^2$. An immediate question when expanding the notion of symplectic is to look for these structures on spheres. 
 
\begin{theorem} Every even dimensional sphere $\mathbb{S}^{2n}$ admits a scattering symplectic structure $\omega$ such that the equator $\mathbb{S}^{2n-1}$ is the singular hypersurface and $\omega$ induces the standard contact structure on $\mathbb{S}^{2n-1}$.\end{theorem}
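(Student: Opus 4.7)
The plan is to construct $\omega$ by realizing $\mathbb{S}^{2n}$ as a gluing of two copies of the radially compactified $\mathbb{R}^{2n}$ along the sphere at infinity, and transporting the standard symplectic form on each hemisphere.

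First I would recall from the discussion in the introduction that the standard symplectic form $\sum dp_i\wedge dq_i$ on $\mathbb{R}^{2n}$ extends, under the compactification $x=1/R$, to the scattering symplectic form
$$\omega_0 = \frac{dx}{x^3}\wedge\alpha - \frac{d\alpha}{2x^2},\qquad \alpha = \sum_i(s_i\,dt_i - t_i\,ds_i),$$
on the closed disk $\overline{D}^{2n}$, with singular hypersurface the boundary $\mathbb{S}^{2n-1}=\{x=0\}$, on which $\alpha$ cuts out the standard contact structure.

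Next I would view $\mathbb{S}^{2n}$ as the union $D_+\cup_{\mathbb{S}^{2n-1}} D_-$ of two such disks identified along their common boundary, with smooth structure specified by a signed defining function $t$ of the equator that restricts to $x_+$ on $D_+$ and to $-x_-$ on $D_-$. Equip each hemisphere with its own copy $\omega_\pm$ of $\omega_0$, written in the coordinate $x_\pm$. The critical check is that in the smooth coordinate $t$ near the equator both forms acquire the same expression
$$\frac{dt}{t^3}\wedge\alpha - \frac{d\alpha}{2t^2};$$
on $D_-$ this follows because substituting $x_-=-t$ introduces two sign flips (in $x_-^3$ and in $dx_-$) that cancel, while the $1/x_-^2$ factor is unaffected. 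Hence $\omega_+$ and $\omega_-$ assemble into a single form $\omega$ defined across the equator.

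Finally I would verify that $\omega$ is indeed scattering symplectic and induces the standard contact structure on the equator: nondegeneracy at $\{t=0\}$ reduces, as in the introduction, to the contact condition $\alpha\wedge(d\alpha)^{n-1}\neq 0$ on $\mathbb{S}^{2n-1}$, which the standard $\alpha$ satisfies. The main obstacle is making the smoothness across the equator rigorous, since here the singular hypersurface is two-sided rather than a boundary; one has to check that the scattering calculus extends symmetrically through the equator and that the signed coordinate $t$ provides a legitimate smooth chart. The sign cancellation exhibited above is precisely what makes the uniform model $\frac{dt}{t^3}\wedge\alpha - \frac{d\alpha}{2t^2}$ valid on a full two-sided collar, so this check should be routine once the setup is pinned down.
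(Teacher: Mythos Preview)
Your approach is correct and closely related to the paper's, though the verification strategy differs. The paper's primary proof (Proposition~\ref{scspheres}) writes down essentially the same global form, $\beta = d(\sigma/z^2)$ on $\mathbb{S}^{2n}\subset\mathbb{R}^{2n+1}$ with $z$ the height coordinate and $\sigma=\tfrac{1}{2}\sum(x_i\,dy_i - y_i\,dx_i)$, and then verifies nondegeneracy by a direct computation in the coordinate charts $U_{x_i}$, $U_{y_i}$, and at the poles. Your form $\frac{dt}{t^3}\wedge\alpha - \frac{d\alpha}{2t^2} = d\bigl(-\alpha/(2t^2)\bigr)$ is the same object up to normalization, but instead of a chart-by-chart check you get nondegeneracy on the open hemispheres for free (each is the standard Euclidean symplectic form) and reduce the equatorial check to the contact condition on $\alpha$. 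The paper also notes, in the remark following Theorem~\ref{gluingtheorem}, that these spheres arise alternatively from its general convex--convex gluing theorem, which interpolates with bump functions; your argument is a streamlined special case of that gluing, exploiting the exact symmetry of the two hemispheres so that the forms match on the nose and no interpolation is needed.
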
 

One very natural context that provides a generalized symplectic structure on spheres is folded symplectic geometry. Richard Melrose \cite{melrose1981equivalence}, by defining folded contact structures, introduced a particular idea of minimally degenerate differential form. Building on \cite{melrose1981equivalence}, Ana Cannas da Silva, Victor Guillemin, and Christopher Woodward \cite{da2000unfolding} define a folded symplectic manifold $(M^{2n},Z,\omega)$ to be a $2n$-dimensional manifold $M$ equipped with a closed two-form $\omega$ that is non-degenerate except on a hypersurface $Z$ where there exist coordinates such that locally $Z=\left\{x_1=0\right\}$ and 

\centerline{$\displaystyle{\omega=x_1dx_1\wedge dy_1+\sum_{i=2}^n dx_i\wedge dy_i}.$}

By allowing this very mild degeneracy, all even dimensional spheres admit a folded symplectic structure. Folded symplectic geometry is similar in spirit to our main object of study. One significant difference is that scattering-symplectic structures correspond to classical minimally-degenerate Poisson structures. The inverse of the morphism $\omega^\flat:TM\to T^*M$ (where it is invertible) for a folded symplectic form does not extend to define a Poisson structure on the entire manifold. 

Unfortunately, there are no b-Poisson spheres in dimensions greater than two: Ioan M\u{a}rcut, and Boris Osorno Torres \cite{muarcuct2014cohomological} showed that a compact $b$-symplectic manifold $(M, Z)$ of dimension $2n$ has a class $c$ in $H^2(M)$ such that $c^{n-1}$ is nonzero in
$H^{2n-2}(M)$. \\

\noindent {\bf \emph{Symplectic Gluing.}} Recall that a symplectic manifold $(M,\omega)$ is a strong symplectic filling of a contact manifold $(Z,\xi)$  if $Z$ is the boundary of $M$ and near $Z$ there is a Liouville vector field $V$ transverse to $Z$ with $i_V\omega$ defining the contact structure $\xi$ such that $\mathcal{L}_V\omega=\omega$. 
These fillings come in two flavors: \emph{convex} means the Liouville vector field $V$ points outward at the boundary $Z$ and \emph{concave} means $V$ points inward at the boundary. 

In order to glue two strong symplectic fillings along a common contact boundary to form a symplectic manifold, one side must be a concave filling and the other must be convex. In this paper, we demonstrate a natural way to expand the symplectic category to allow gluings of convex to convex and concave to concave fillings. 

\begin{theorem} Given two strong convex symplectic fillings of a contact manifold, their union over $Z$ admits a scattering symplectic structure that coincides with the existing symplectic structures outside of a collar
neighborhood of  $Z$.

Given two strong concave symplectic fillings of a contact manifold, their union over $Z$ admits a folded symplectic structure that coincides with the existing symplectic structures outside of a collar
neighborhood of $Z$. \end{theorem}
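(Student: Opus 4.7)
The plan is to leave each $\omega_i$ unchanged on the bulk of $M_i$ and to splice in an explicit interpolating form $\tilde\omega = d(h(x)\alpha)$ across $Z$, modeled on the paper's Kronheimer--Mrowka example $\omega_0 \mapsto -\frac{dx}{x^3}\wedge\theta + \frac{1}{2x^2}d\theta$. A suitable choice of the profile function $h$ near $x = 0$ will yield a scattering form (convex case) or a folded form (concave case).

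First I would normalize the two collars. Fix a contact form $\alpha$ on $Z$ representing $\xi$. Since $\omega_i$ is exact on a tubular neighborhood of $Z$ in $M_i$, one can choose a primitive $\lambda_i$ of $\omega_i$ there with $\lambda_i|_Z = \alpha$; then $V_i = (\omega_i^\flat)^{-1}(\lambda_i)$ is a Liouville vector field with $\iota_{V_i}\omega_i|_Z = \alpha$, whose flow exhibits $\omega_i = d(e^{t_i}\alpha)$ on a collar of $Z$ in $M_i$. On the union $M_1\cup_Z M_2$, introduce a common coordinate $x$ with $Z = \{x = 0\}$, $M_1\subset\{x\ge 0\}$, $M_2\subset\{x\le 0\}$. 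The outward (convex) or inward (concave) direction of $V_i$ pins down the sign of $dt_i/dx$, producing
\[
\omega_1 = d(e^{-x}\alpha),\quad \omega_2 = d(e^{x}\alpha) \quad\text{(convex case)},
\]
with the reversed signs in the concave case.

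Next I would construct $\tilde\omega = d(h(x)\alpha)$ on the collar $(-\delta,\delta)\times Z$. Choose $h$ smooth and positive, agreeing with the corresponding coefficient of $\omega_i$ for $|x|\ge\delta/2$, and with prescribed profile at $x = 0$: in the convex case, $h(x) = c/x^2$ near $0$, so that $x^2 h$ extends smoothly and non-vanishingly across $0$ and $\tilde\omega$ is a smooth section of $\wedge^2({}^{\mathrm{sc}}T^*M)$; in the concave case, $h(0) = 1$, $h'(0) = 0$, $h''(0) > 0$, so that $h$ is smooth across $0$. Extending by $\omega_i$ outside the collar gives a globally defined form that coincides with $\omega_i$ away from $Z$.

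Verification reduces to computing
\[
\tilde\omega^n = n\, h'(x)\, h(x)^{n-1}\, dx\wedge\alpha\wedge(d\alpha)^{n-1},
\]
and using the contact volume $\alpha\wedge(d\alpha)^{n-1}\neq 0$. The form is symplectic away from $x = 0$ since $h, h'\ne 0$ there. In the convex case, $h'h^{n-1}\sim -2nc^n/x^{2n+1}$ near $Z$, yielding a nonvanishing scattering volume form, so $\tilde\omega$ is scattering-symplectic. In the concave case, $h'h^{n-1}\sim h''(0)\, x$ vanishes transversely at $Z$, and $\tilde\omega|_Z = d\alpha$ has maximal rank $2n-2$, matching the Cannas--Guillemin--Woodward folded normal form. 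The main technical step is the collar normalization, which rests on the surjectivity of the primitive-restriction map on a tubular neighborhood; once that is in place, the construction and its verification are direct.
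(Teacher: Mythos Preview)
Your approach is essentially the paper's: normalize each collar via the Liouville flow to $\omega_i=d(e^{\mp r_i}\alpha)$ (this is the paper's Lemma~\ref{gluelemma}), then splice in an exact form $d(h\,\alpha)$ whose profile $h$ carries a second-order pole at $Z$ in the convex case and a nondegenerate minimum in the concave case, and verify nondegeneracy using $\alpha\wedge(d\alpha)^{n-1}\neq 0$. The paper's convex gluing uses the inversion $r_1r_2=1$ (placing $Z$ at $r_1=1$) and writes $h$ as an explicit combination of two bump functions checked by hand in Lemma~\ref{nondeglemma}, while your single transverse coordinate $x$ and single profile $h$ are tidier; the concave constructions are identical once one sets $h(r)=\psi(r)e^{r}+\psi(-r)e^{-r}$. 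Two small points you should make explicit in a full write-up: the two fillings a priori induce different contact \emph{forms} for $\xi$, so one collar must be reparametrized by $r\mapsto r+\log g$ before $h$ can be taken to depend on $x$ alone (the paper also glosses over this); and the interpolated $h$ must be chosen with $h'\neq 0$ throughout the transition region, which is elementary but deserves a sentence.
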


\begin{fig} On the left, convex to convex produces scattering symplectic. On the right, concave to concave gives folded symplectic.

\begin{center}\includegraphics[scale=.65]{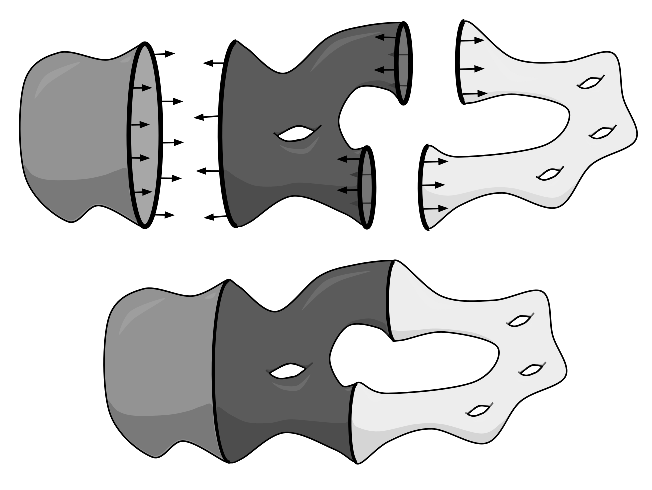}\end{center}

 \end{fig}

\vspace{2ex} 

 We can use this theorem to construct many examples of scattering-symplectic manifolds, see Section \ref{gluingsection}. For instance, $\mathbb{T}^2\times\mathbb{S}^2$ is scattering-symplectic with singular hypersurface three torus $\mathbb{T}^3$. We also have that $\mathbb{S}^3\times\mathbb{S}^1$ is scattering-symplectic with singular hypersurface $\mathbb{S}^2\times\mathbb{S}^1$. While many scattering symplectic manifolds arise in this way, not all such structures can be obtained by gluing two strong convex fillings, see proposition \ref{notfill}.\\

\noindent {\bf \emph{ Poisson Cohomology.}}  Given a contact structure $\xi$ on $Z$, let $\alpha$ be a contact one form defining $\xi$. Consider the space of forms 

\centerline{$\Omega_\xi^{k}(Z):=\alpha \wedge \Omega^{k}(Z)\subset \Omega^{k+1}(Z)$.}

\begin{theorem} If $(M,\pi)$ is a scattering-Poisson manifold, with induced contact structure $\xi$ on $Z$, then the Poisson cohomology $H^p_{\pi}(M)$ is 

\centerline{$\displaystyle{H^{p}(M)\oplus H^{p-1}(Z)\oplus\Omega^{p-1}(Z)\oplus\Omega_\xi^{p-1}(Z)\oplus \ker(d\alpha\wedge:\Omega_\xi^{p-2}(Z)\to\Omega_\xi^{p}(Z)).}$}
\end{theorem}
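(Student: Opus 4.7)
The plan is to invoke the isomorphism of complexes $(\mathcal{V}^*, d_\pi) \simeq ({}^\mathcal{R}\Omega^*, d_\mathcal{R})$ provided by the rigged algebroid discussed above, thereby reducing Poisson cohomology to the Lie algebroid cohomology of $\mathcal{R}$. First I would pin down $\mathcal{R}^*$ in a tubular neighborhood $U \cong [0,\varepsilon)_x \times Z$ of the singular hypersurface. From the normal form $\omega = \tfrac{dx}{x^3}\wedge\alpha - \tfrac{d\alpha}{2x^2}$ one computes $\omega^\flat(\partial_x) = \tfrac{\alpha}{x^3}$, $\omega^\flat(R) = -\tfrac{dx}{x^3}$ for the Reeb field $R$, and $\omega^\flat(v) = -\tfrac{1}{2x^2}\iota_v d\alpha$ for $v \in \xi$, so local sections of $\mathcal{R}^*$ are spanned by $\tfrac{dx}{x^3}$, $\tfrac{\alpha}{x^3}$, and $\tfrac{1}{x^2}\gamma$ for $\gamma \in \Gamma(\xi^*)$, while away from $Z$ the algebroid coincides with $T^*M$.

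Next I would compute ${}^\mathcal{R}H^*(M)$ by a Mayer--Vietoris argument applied to the cover $\{U, V\}$ with $V = M \setminus Z$. Since $\pi$ is nondegenerate on $V$, we have ${}^\mathcal{R}H^*(V) = H^*(M \setminus Z)$, and the retraction onto $Z$ gives ${}^\mathcal{R}H^*(U \setminus Z) \simeq H^*(Z)$. The substantive step is the computation of ${}^\mathcal{R}H^*(U)$: I would Taylor-expand an ${}^\mathcal{R}$-form in the normal coordinate $x$, decompose each coefficient via the contact splitting $\Omega^*(Z) = \alpha \wedge \Omega^{*-1}_\xi(Z) \oplus \Omega^*_\xi(Z)$, and solve the coboundary equation order by order in $x$. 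Most Taylor coefficients can be killed by explicit primitives (solving an ODE in $x$); only the lowest-order singular coefficients persist as cohomology, and on the contact-supported part the residual differential turns out to be wedging with $d\alpha$, reflecting the $\tfrac{d\alpha}{2x^2}$ curvature term of the model.

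The five summands then fall out as follows: $H^p(M) \oplus H^{p-1}(Z)$ is the standard de Rham contribution, obtained by patching $H^*(M \setminus Z)$ against the smooth (non-singular) part of ${}^\mathcal{R}H^*(U)$ through the long exact sequence of the pair $(M, M \setminus Z)$, while the remaining three pieces $\Omega^{p-1}(Z) \oplus \Omega_\xi^{p-1}(Z) \oplus \ker\bigl(d\alpha \wedge \colon \Omega_\xi^{p-2}(Z) \to \Omega_\xi^p(Z)\bigr)$ enumerate the surviving singular Taylor coefficients on $U$, organized by the order of their $\tfrac{1}{x}$ pole and by whether they sit in the Reeb direction $\alpha$ or along the contact distribution $\xi$. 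These pieces are invisible to the connecting map in Mayer--Vietoris because they are $x$-flat off of $Z$, so the sequence splits into the direct sum in the statement.

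The main obstacle is this local calculation on $U$. One must organize the formal-power-series analysis carefully enough to see that the only obstructions to exactness are precisely the claimed vector spaces---in particular, pinning down that the final residual differential on the contact-direction pieces is exactly $d\alpha \wedge$, so that the answer is $\ker(d\alpha\wedge)$ and not some larger or smaller subspace. This is where the bracket of the rigged algebroid, dictated by the scattering-symplectic normal form, gets used in full; the rest of the argument is then a standard Mayer--Vietoris assembly.
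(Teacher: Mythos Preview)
Your strategy is essentially the paper's: reduce to the rigged algebroid via $\omega^\flat$, identify the local generators of $\mathcal{R}^*$ exactly as you do, and then extract the cohomology by Taylor-expanding in $x$ and solving the closedness equations order by order. The paper carries out precisely this local computation, and the $d\alpha\wedge$ kernel appears for exactly the reason you anticipate.

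The one genuine difference is the global assembly. You propose Mayer--Vietoris on $\{U, M\setminus Z\}$; the paper instead observes that ${}^b\Omega^*(M)$ sits inside ${}^\mathcal{R}\Omega^*(M)$ as a subcomplex, and that in a fixed tubular neighborhood the inclusion splits (the ``regular'' $b$-part and the ``singular'' quotient part $\mathscr{C}^*$ commute with $d$). This gives ${}^\mathcal{R}H^p(M) = {}^bH^p(M) \oplus H^p(\mathscr{C}^*)$ directly, and then Mazzeo--Melrose supplies ${}^bH^p(M)\simeq H^p(M)\oplus H^{p-1}(Z)$. This is cleaner than Mayer--Vietoris because it avoids having to track the connecting homomorphism and argue that the sequence splits; your route would work, but you would still have to compute ${}^\mathcal{R}H^*(U)$ in full (which already contains the $b$-piece $H^*(Z)\oplus H^{*-1}(Z)$) and then chase the long exact sequence to reassemble $H^*(M)\oplus H^{*-1}(Z)$ from $H^*(M\setminus Z)$ and $H^*(Z)$.

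One terminological slip: the singular representatives are not ``$x$-flat off of $Z$''---they are singular at $Z$ and perfectly smooth elsewhere. What you presumably mean is that they restrict to \emph{exact} forms on $U\setminus Z$ (their primitives, e.g.\ $-\tfrac{1}{2kx^{2k}}\alpha_0$, are smooth away from $Z$ but not $\mathcal{R}$-forms across $Z$), hence die under the restriction map in Mayer--Vietoris. That is correct, but say it that way.
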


\begin{remark} To provide this splitting of the Poisson cohomology into its various components, we fixed a tubular neighborhood of $Z$. A more invariant version can be found in Theorem \ref{scpoissoncohom}.\end{remark}

\noindent {\bf Acknowledgements. }

I would first and foremost like to thank Pierre Albin, who inspired this project and guided me through its completion. I am particularly grateful to Rui Loja Fernandes for his extensive comments.  I am grateful to Ioan M\u{a}rcu\c{t} and Ralph Klaasse for their suggestions and interesting questions. Travel support was provided by Pierre Albin's Simon's Foundation grant \#317883. Finally, I wish to thank the referee for the their comments and suggestions, which greatly improved the quality of the paper.

\section{Generalities}

In this section, we establish the framework we will use to understand scattering-symplectic structures. 

{\bf Rescaling Lie algebroids.} Richard Melrose explained how to rescale a vector bundle with a filtration over a hypersurface; see Proposition 8.1 in \cite{Melrose93}.
We adapt his construction to Lie algebroids. Note that Melrose rescaling is equivalent to lower elementary modification introduced by Travis Li in his thesis \cite{gualtieri2013symplectic,li2013constructions}. We will describe how to rescale a Lie algebroid with respect to a suitable subbundle over a hypersurface. Let $(M,Z)$ be a manifold $M$ with a coorientable hypersurface $Z\subset M$ and let $(\mathcal{A},\rho,[\cdot,\cdot])$ be a Lie algebroid over $M$. We will assume $Z$ to be compact, unless otherwise stated. Given a Lie subalgebroid $F\subseteq \mathcal{A}|_Z\to Z$, consider the space of sections:  

\begin{equation}\label{eq:D} \mathcal{D}=\left\{{u\in \mathcal{C}^{\infty}(M; \mathcal{A}): u\vert_Z\in\mathcal{C}^{\infty}(Z;F)}\right\}.\end{equation}

 \noindent $\mathcal{D}$ consists of the set of smooth sections of $\mathcal{A}$ that take values in $F$ at $Z$. There exists a Lie algebroid $(^F\mathcal{A}, \rho{}_F, [\cdot,\cdot]_F)$ over $M$ whose space of sections `is' $\mathcal{D}$ as defined in (\ref{eq:D}).

\begin{theorem}\label{scaling} There exists a vector bundle $^F\mathcal{A}\to M$ with a Lie algebroid morphism

\centerline{$i: {^F\mathcal{A}}\to \mathcal{A}$} \noindent that is an isomorphism over $M\setminus Z$ such that $i_*\mathcal{C}^\infty(M;\mathcal{A})=\mathcal{D}$. 
 \end{theorem} 
 
\begin{proof} To establish notation for verifying we have a Lie algebroid structure on our vector bundle we will include the steps to show that ${}^\mathcal{F}\mathcal{A}$ is a vector bundle. However, as mentioned above, these steps are solely due to Melrose. We begin by noting that $\mathcal{D}$ is preserved under multiplication by any smooth function on $M$ and thus is a $\mathcal{C}^{\infty}(M)$ module. For any $p\in M$, we will consider the ideal 

\centerline{$\mathcal{I}_p:=\left\{f\in\mathcal{C}^{\infty}(M):f(p)=0\right\}$.} \noindent Let 

\centerline{$\displaystyle{{}^F\mathcal{A}_p=\mathcal{D}/(\mathcal{I}_p\cdot\mathcal{D}) ~~\text{  and  }~~ ^{F}\mathcal{A}=\bigsqcup_{p\in M}{}^{F}\mathcal{A}_p.}$} \noindent Then because we can describe $\mathcal{A}_p$ as $\mathcal{C}^{\infty}(M; \mathcal{A})/(\mathcal{I}_p\cdot \mathcal{C}^{\infty}(M; \mathcal{A}))$, there exists a natural map $i: {^{F}\mathcal{A}_p}\to \mathcal{A}_p$ taking a section in $\mathcal{D}$ and evaluating it at the point $p$. For all $p\in M\setminus Z$, this map is an isomorphism. 

Suppose $p\in Z$ and let $\left\{{v_1,\dots,v_k}\right\}$ be a local basis of smooth sections of $F$. We can smoothly extend this to a basis  $\left\{{v_1,\dots,v_k,v_{k+1},\dots,v_M}\right\}$ of $\mathcal{A}$. Given a local $Z$ defining function $x$, any element $X\in\mathcal{D}$ is locally of the form 

\centerline{$\displaystyle{X=\sum_{i=1}^{k}g_iv_i+\sum_{j=k+1}^M xg_jv_j}$} \noindent for smooth functions $g_i$. Thus  $\left\{{v_1,\dots,v_k,xv_{k+1},\dots,xv_M}\right\}$ is a local basis of $^F\mathcal{A}$ and the coefficients $g_1,\dots,g_M$ give a local trivialization. 

Given any other local basis  $\left\{{v^\prime_1,\dots,v^\prime_k}\right\}$ for $F$, we can extend smoothly to a local basis $\left\{{v^\prime_1,\dots,v^\prime_k,v^\prime_{k+1},\dots,v^\prime_M}\right\}$ of $\mathcal{A}$. Each of $\left\{{v^\prime_1,\dots,v^\prime_k,xv^\prime_{k+1},\dots,xv^\prime_M}\right\}$ can be expressed as a smooth linear combination of $\left\{{v_1,\dots,v_k,xv_{k+1},\dots,xv_M}\right\}$. So this induces smooth transformations among the coefficients $g_i$ and $^{F}\mathcal{A}$ inherits a natural smooth bundle structure from $\mathcal{A}$ with bundle map $i:{^F\mathcal{A}}\to \mathcal{A}$. 

This ends the proof of Melrose's vector bundle construction. At this point we will explain how to equip the bundle with a Lie algebroid structure. Given $(\mathcal{A},[\cdot,\cdot],\rho)$, we can consider the rescaled vector bundle $^F\mathcal{A}$ as a Lie algebroid by taking anchor map $\rho_F$ to be the composition of the bundle map $i: {^F\mathcal{A}}\to \mathcal{A}$ with bundle map $\rho:\mathcal{A}\to TM$ and by taking as a Lie bracket $[\cdot,\cdot]_{F}=i^{-1}([i(\cdot),i(\cdot)])$, the bracket induced from the bracket $[\cdot,\cdot]$ on $\mathcal{A}$. 

We must verify that $[\cdot,\cdot]_{F}$ is a well-defined map $\Gamma({^F\mathcal{A}})\times\Gamma({^F\mathcal{A}}) \to\Gamma({^F\mathcal{A}})$, where $\Gamma({^F\mathcal{A}})$ is the $\mathcal{C}^\infty(M)$-module of smooth sections of ${^F\mathcal{A}}\to M$. As described above, given a local $Z$ defining function $x$, any elements $X,Y\in\mathcal{D}$ are locally of the form 

\centerline{$\displaystyle{X=\sum_{i=1}^{k}g_iv_i+\sum_{j=k+1}^M xg_jv_j~~\text{ and }~~ Y=\sum_{i=1}^{k}f_iv_i+\sum_{j=k+1}^M xf_jv_j.}$} \noindent Then  

\centerline{\scalebox{.95}{$\displaystyle{[X,Y]_F=\sum_{i,j=1}^{k}[g_iv_i,f_jv_j]+\sum_{i=1}^{k}\sum_{j=k+1}^{M}[g_iv_i,xf_jv_j]+\sum_{i=1}^{k}\sum_{j=k+1}^M [xg_iv_i,f_jv_j]+\sum_{i,j=k+1}^{M}[xg_iv_i,xf_jv_j].}$}} 
\noindent It suffices to show that each term of the sum restricts to a section of $\Gamma(F)$ or vanishes at $Z$.  
For $1 \leq i,j\leq k$, we have that $[g_iv_i,f_jv_j]|_Z\in\Gamma(F)$ by assumption and consequently $[g_iv_i,f_jv_j]\in\Gamma(^F\mathcal{A})$. Consider $[g_iv_i,xf_jv_j]=g_i\rho(v_i)(x)\cdot f_jv_j + x[g_iv_i,f_jv_j]$. Since we assumed that $\rho(F)|_Z\subseteq TZ$, $\rho(v_i)(x)=0$ which implies $[g_iv_i,xf_jv_j]\in\Gamma(^F\mathcal{A})$. By antisymmetry of the Lie bracket, $[xg_iv_i,f_jv_j]\in\Gamma(^F\mathcal{A})$. Finally we consider the term $[xg_iv_i,xf_jv_j]=xg_i\rho(v_i)(x)\cdot f_jv_j+x[xg_iv_i,f_jv_j]\in\Gamma(^F\mathcal{A})$. 

Our final task is to check that $[\cdot,\cdot]_F$ satisfies the Leibniz rule: 

\centerline{$[X,fY]_F=\rho_F(X)f\cdot Y + f[X,Y]_F$}

\noindent where $X,Y\in\Gamma(^F\mathcal{A})$, $f\in C^\infty(M)$ and $\rho_F(X)f$ is the Lie derivative of $f$ with respect to the vector field $\rho_F(X)$. Consider $[X,fY]_F=i^{-1}([i(X),i(fY)])=$ 

\centerline{$\displaystyle{i^{-1}(\rho(i(X))f\cdot i(Y)+f[i(X),i(Y)])=\rho_F(X)f\cdot Y+ f[X,Y]_F}$} \noindent because $i$ is an injective bundle map on $M\setminus Z$ and this identity extends by continuity. 
\end{proof} 

Note that the restriction of ${}^F\mathcal{A}$ to $Z$ is not $F$, but rather is a vector bundle of the same rank as $\mathcal{A}$. As explained by Melrose in Lemma 8.5 of \cite{Melrose93}, ${}^F\mathcal{A}|_Z$ is isomorphic to the graded bundle 
\begin{equation}\label{eq:Arestriction} F\oplus (N^*Z\otimes \mathcal{A}|_Z/F).\end{equation} 
The conormal bundle here makes this bundle invariant of choice of local $Z$ defining function. 
Further, $^F\mathcal{A}$ is, non-canonically, isomorphic to $\mathcal{A}$: Given a local $Z$ defining function $x$, we can map a local expression of an element of $\Gamma(^F\mathcal{A})$ to an element in  $\Gamma(\mathcal{A})$ by 

\centerline{$\displaystyle{\sum_{i=1}^{k}g_iv_i+\sum_{j=k+1}^M xg_jv_j\to \sum_{i=1}^{k}g_iv_i+\dfrac{1}{x}\sum_{j=k+1}^M xg_jv_j.}$} \vspace{2ex}

Next, we will explore some specific applications of this construction and introduce the scattering tangent bundle. \\

\noindent {\bf \emph{ Zero tangent bundle.}}\label{zerobundle} We can consider the tangent bundle of a manifold $TM\to M$ as a Lie algebroid with Lie bracket the standard bracket on vector fields and with anchor map the identity map. We can apply Theorem \ref{scaling} to a pair $(M,Z)$ and rescale $TM$ using the subbundle $0\to Z$.   The rescaled bundle ${^0TM}\to M$ is called the {\emph{zero tangent bundle}} and was introduced by Rafe Mazzeo and Richard Melrose in the context of manifolds with boundary \cite{mazzeo1988hodge,mazzeo1987meromorphic}. In this case $\mathcal{D}$ is the set of vector fields that vanish at $Z$. If $x,y_1,\dots,y_n$ are local coordinates near a point in $Z$, and $x$ is a local defining function for $Z$, then vector fields 

\centerline{$x\dfrac{\partial}{\partial x}, x\dfrac{\partial}{\partial y_1},\dots,x\dfrac{\partial}{\partial y_n}$} \noindent form a local basis for ${^0TM}$. Note that these do not vanish at $Z$ as sections of $^0TM$. The dual bundle ${^0T^*M}\to M$ is called the {\emph{zero cotangent bundle}} and is locally generated by 

\centerline{$\dfrac{dx}{x}, \dfrac{dy_1}{x},\dots,\dfrac{dy_n}{x}.$}
\noindent The anchor map of the $^0TM$ algebroid is evaluation in the tangent bundle and the bracket is induced by the standard Lie bracket on $TM$. \\

\noindent {\bf \emph{b-tangent bundle.}} We recover the \emph{$b$-tangent bundle} as formulated in \cite{guillemin2014symplectic}, by applying Theorem \ref{scaling} to the tangent bundle $TM$ over a pair $(M,Z)$, and taking as subbundle $TZ\to Z$. The $b$-tangent bundle is the vector bundle whose space of sections is 

\centerline{$\mathcal{D}=\left\{{u\in \mathcal{C}^{\infty}(M; TM):i\circ u\vert_Z\in \mathcal{C}^{\infty}(Z; TZ)}\right\}$,} \noindent the vector fields that are tangent to $Z$. If $x,y_1,\dots,y_n$ are local coordinates near a point in $Z$, and $x$ is a local  defining function for $Z$, then the vector fields and co-vectors
\begin{center}\begin{tabular}{l c c c c r}
$x\dfrac{\partial}{\partial x}, \dfrac{\partial}{\partial y_1},\dots,\dfrac{\partial}{\partial y_n}$& & & & &$\dfrac{dx}{x}, dy_1,\dots,dy_n$\\
\end{tabular}\end{center}
respectively form local bases for ${^{b}TM}$ and $^{b}T^*M$. The anchor map of the $^bTM$ algebroid is evaluation in the tangent bundle and the bracket is induced by the standard Lie bracket on $TM$. \\

\noindent {\bf \emph{ Scattering tangent bundle.}} Next we rescale the $b$-tangent bundle $^bTM \to M$ over a pair $(M,Z)$ using the subbundle $0\to Z$. The resulting bundle ${^{sc}TM}\to M$ is called the {\emph{scattering tangent bundle}}. If $x$ is a local defining function for $Z$, and $y_1,\dots,y_n$ are local coordinates in $Z$, then the vector fields 

\centerline{$x^2\dfrac{\partial}{\partial x}, x\dfrac{\partial}{\partial y_1},\dots,x\dfrac{\partial}{\partial y_n}$} \noindent form a local basis for ${^{sc}TM}$.  The dual bundle ${^{sc}T^*M}\to M$ is called the {\emph{scattering cotangent bundle}} and is locally generated by 

\centerline{$\dfrac{dx}{x^2}, \dfrac{dy_1}{x},\dots,\dfrac{dy_n}{x}.$}
\noindent The anchor map is evaluation in the $b$-tangent bundle and then in the tangent bundle $TM$. In the same way, the bracket is induced by the standard Lie bracket on $TM$. \vspace{2ex}

The next example is a generalization of the scattering tangent bundle that is comparable to Geoffrey Scott's $b^k$ generalization of the $b$-tangent bundle \cite{scott2016geometry}. \\

\noindent {\bf \emph{ Scattering-$k$ tangent bundle.}} Consider the scattering tangent bundle $^{sc}TM$ associated to $(M,Z)$. The scattering-2 tangent bundle $^{sc^2}TM$ is the vector bundle whose space of sections is $\mathcal{D}=\left\{{u\in \mathcal{C}^{\infty}(M; ^{sc}TM):u\vert_Z=0}\right\}.$ In other words, $^{sc^2}TM$ is rescaling $^{sc}TM$ by the subbundle $0\to Z$. 

In this fashion, given the scattering-$(k-1)$ tangent bundle $^{sc^{(k-1)}}TM$ associated to $(M,Z)$, the {\emph{scattering-$k$ tangent bundle}} $^{sc^k}TM$ is the vector bundle whose space of sections is $\mathcal{D}=$ $\left\{{u\in \mathcal{C}^{\infty}(M; ^{sc^{(k-1)}}TM):u\vert_Z=0}\right\}.$ If $x$ is a local defining function for $Z$, and $y_1,\dots,y_n$ are local coordinates in $Z$, then the vector fields 

\centerline{$x^{k+1}\dfrac{\partial}{\partial x}, x^k\dfrac{\partial}{\partial y_1},\dots,x^k\dfrac{\partial}{\partial y_n}$} \noindent form a local basis for ${^{sc^k}TM}$.  The dual bundle ${^{sc^k}T^*M}\to M$ is called the {\emph{scattering-$k$ cotangent bundle}} and is locally generated by 

\centerline{$\displaystyle{\frac{dx}{x^{k+1}}, \frac{dy_1}{x^k},\dots,\frac{dy_n}{x^k}.}$} 
\vspace{2ex}

 {\bf Morphisms of rescaled tangent bundles} In 1996 Ryszard Nest and Boris Tsygan introduced the notion of $b$-symplectic structures \cite{nest1996formal} on Melrose's $b$-tangent bundle. To elaborate more on the properties of $b$-symplectic manifolds, Victor Guillemin, Eva Miranda, and Ana Rita Pires \cite{guillemin2014symplectic} define the $b$-category. We will use this category for all rescaled bundles that arise from a sequence of rescalings of $TM$.   
\begin{definition} \cite{guillemin2014symplectic} A $b$-manifold is a pair $(M, Z)$ consisting of an oriented manifold $M$
and an oriented hypersurface $Z\subset M$. A $b$-map is a map 

\centerline{$f:(M_1, Z_1) \to (M_2, Z_2)$,} \noindent (a map $f:M_1\to M_2$ such that $f(Z_1)\subseteq Z_2$) with   $f$ transverse to $Z_2$ such that $f^{-1}(Z_2) = Z_1$. The $b$-category is
the category whose objects are $b$-manifolds and morphisms are $b$-maps.\end{definition} 

\subsection{$\mathcal{A}$-multivector fields}

Given a Lie algebroid $(\mathcal{A},[\cdot,\cdot],\rho)$, we can build {\bf $\mathcal{A}$-multivector fields}. These are 

\centerline{$^{\mathcal{A}}\mathcal{V}^k(M)=\mathcal{C}^{\infty}(M;\wedge^k\mathcal{A})$} \noindent and are analogous to multi vectorfields constructed using $\mathcal{A}=TM$. We can define an $\mathcal{A}$-Schouten bracket for these sections with a slight modification of the standard formula: The bracket of two $\mathcal{A}$-multivector fields is given by

\centerline{${\displaystyle [a_{1}\cdots a_{m},b_{1}\cdots b_{n}]=\sum _{i,j}(-1)^{i+j}[a_{i},b_{j}]a_{1}\cdots a_{i-1}a_{i+1}\cdots a_{m}b_{1}\cdots b_{j-1}b_{j+1}\cdots b_{n}}$} 
\noindent for $\mathcal{A}$-vector fields $a_i, b_j$ and where $[a_{i},b_{j}]$ denotes the $\mathcal{A}$-bracket. The bracket with a function $f$ is 
${\displaystyle [f,a_{1}\cdots a_{m}]=-i_{df}(a_{1}\cdots a_{m})}$ 
for $\mathcal{A}$-vector fields $a_i$ where $i_{df}$ denotes the interior product. 

This bracket provides enough structure for us to define an $\mathcal{A}$-Poisson structure on $\mathcal{A}$. 

\begin{definition} Given a Lie algebroid $(\mathcal{A},[\cdot,\cdot],\rho)$, an $\mathcal{A}$-bivector $\pi\in{}^{\mathcal{A}}\mathcal{V}^2(M)$ is {\bf Poisson} if $[\pi,\pi]=0$ where $[\cdot,\cdot]$ is the $\mathcal{A}$-Schouten bracket. \end{definition}

\subsection{Lifting} Given a Poisson structure $(M,\pi)$, we want to find an algebroid $\mathcal{A}$ where we can `view' $\pi$ as non-degenerate. Rigorously, we do this through lifts. 

\begin{definition}
Given a Poisson manifold $(M,\pi)$ and algebroid $\mathcal{A}\to M$, we say that $\pi$ is {\bf $\mathcal{A}$-Poisson} if there exists a Poisson bivector $\pi_\mathcal{A}\in\mathcal{C}^\infty(M;\wedge^2\mathcal{A})$ such that  $\rho(\pi_\mathcal{A})=\pi$. The bivector $\pi_\mathcal{A}$ is called an {\bf $\mathcal{A}$-lift} of $\pi$. \end{definition} 

\subsection{Anchored Lie algebroids} Frequently an $\mathcal{A}$-lift is degenerate. Since our goal is to eventually find a Lie algebroid with a lift that is non-degenerate, we can continue trying to lift our bivector by `stacking' Lie algebroids. 

\begin{definition}\emph{(\cite{Klaasse17}, Section 2.6).} Let $(\mathcal{B},[\cdot,\cdot],\rho_{\mathcal{B}})$ be a Lie algebroid of a manifold $M$.  Another Lie algebroid $(\mathcal{A},[\cdot,\cdot],\rho_\mathcal{A})$ is {\bf $\mathcal{B}$-anchored} if there exists a Lie algebroid morphism  $\phi:\mathcal{A}\to\mathcal{B}$ over the same base covering the identity: 

\begin{center}$\begin{tikzcd}
\mathcal{A} \arrow[rd, "\rho_{\mathcal{A}}"'] \arrow[rr, "\phi"] &  & \mathcal{B} \arrow[ld, "\rho_{\mathcal{B}}"] \\
 & TM & 
\end{tikzcd}$\end{center} \noindent We call $\phi$ the {\bf factoring map}. 
\end{definition}

The fact that we can anchor a Lie algebroid with another will  be extremely useful in the computation of  Lie algebroid cohomology. Recall that to every algebroid there is an associated cohomology theory. Recall, for any Lie algebroid $(\mathcal{A},[\cdot,\cdot]_\mathcal{A},\rho_\mathcal{A})$ over $M$, with dual $\mathcal{A}^*$, the degree $k$ $\mathcal{A}$-forms are 

\centerline{${}^\mathcal{A}\Omega^{k}(M)=\Gamma(\wedge^k\mathcal{A}^*),$} \noindent the sections of the $k$th exterior power of the dual bundle $\mathcal{A}^*$. The differential operator $d_\mathcal{A}$ acting on $^\mathcal{A}\Omega^{*}(M)$,
$d_\mathcal{A}:{^\mathcal{A}\Omega^{k}(M)}\to {^\mathcal{A}\Omega^{k+1}(M)}$ 
is defined by

\centerline{$\displaystyle{(d_\mathcal{A}\beta)(\alpha_0, \alpha_1,\dots, \alpha_k)= \sum_{i=0}^{k}(-1)^i\rho_{\mathcal{A}}({\alpha_i})\cdot\beta(\alpha_0,\dots,\hat{\alpha}_i,\dots,\alpha_k)}$}

\centerline{$\displaystyle{+\sum_{0\leq i<j\leq k}(-1)^{i+j}\beta([\alpha_i,\alpha_j]_\mathcal{A},\alpha_0,\dots,\hat{\alpha_i},\dots,\hat{\alpha_j},\dots,\alpha_k)}$}

\noindent for $\beta\in {^\mathcal{A}\Omega^{k}(M)}$, and  $\alpha_0,\dots, \alpha_k \in \Gamma(\mathcal{A}).$ This is a complex whose cohomology is called the Lie algebroid cohomology of $\mathcal{A}$ or $\mathcal{A}$-cohomology. 

We are now capable of generalizing the notion of a symplectic structure to any Lie algebroid. 

\begin{definition} Given a rank $2k$ Lie algebroid $(\mathcal{A},[\cdot,\cdot],\rho)$ over a manifold $M$, an $\mathcal{A}$-symplectic structure is a degree 2 $\mathcal{A}$-form $\omega\in{}^{\mathcal{A}}\Omega^2(M)$ such that $d_\mathcal{A}\omega = 0$ and $\wedge^k \omega$ is a nowhere vanishing section of ${}^{\mathcal{A}}\Omega^{2k}(M)$. 
\end{definition} 

When $\mathcal{A}=TM$ with the standard Lie bracket, we recover the definition of a symplectic form.

 \subsubsection{{\bf Local $Z$ defining functions and density bundles}}

Given a smooth manifold with cooriented hypersurface $Z\subset M$, the Lie algebroids $\mathcal{A}$ obtained from rescaling the tangent bundle, e.g. ${}^bTM,{}^0TM, {}^{sc}TM$, do not depend on a choice of local $Z$ defining function, hence neither do their $\mathcal{A}$-de Rham cohomologies. However, it is convenient for computations to work with a fixed defining function in a tubular neighborhood of $Z$.  Consequently, when we compute cohomology using a particular choice of local defining function we must conclude our computation by taking into account what happens to classes under change of local defining function. In the next example we will demonstrate the consequences of changing local defining function for some scattering forms. 

%\begin{example} Let $x$ be a $Z$ defining function on a manifold $(M,Z)$. Then $v\in{}^b\Omega^k(M)$ can be expressed as $v=\dfrac{dx}{x}\wedge\alpha+\beta$ for $\alpha,\beta\in\Omega^*(M)$.Note $dv=\dfrac{dx}{x}\wedge(x\partial_x\beta -d\alpha)+d\beta$ is a $b$-form with $d$ the usual differential applied on $M\setminus Z$. Notice that $dv=\dfrac{dx}{x}\wedge(x\partial_x\beta -d\alpha)+d\beta=0$ if and only if 

%\centerline{$ d\alpha = x\partial_x\beta \text{ and }d\beta=0.$}

%We can express any other $Z$ defining function $\widetilde{x}$ as $x=f\widetilde{x}$ for some nowhere vanishing function $f\in \mathcal{C}^{\infty}(M)$. Then 

%\centerline{$\dfrac{dx}{x}=\dfrac{\widetilde{x}df+fd\widetilde{x}}{\widetilde{x}f}=\dfrac{df}{f}+\dfrac{d\widetilde{x}}{\widetilde{x}}~~\text{ and }~~ \dfrac{dx}{x}\wedge\alpha+\beta=\dfrac{df}{f}\wedge\alpha+\dfrac{d\widetilde{x}}{\widetilde{x}}\wedge\alpha+\beta.$} \noindent Notice that $\log{|f|}$ is a smooth function because $f$ is non-vanishing. Then $d(\log{|f|}~\alpha)=\dfrac{df}{f}\wedge\alpha$ because $\alpha$ is closed. Thus, in cohomology 

%\centerline{$\left[\dfrac{dx}{x}\wedge\alpha+\beta \right]=\left[\dfrac{d\widetilde{x}}{\widetilde{x}}\wedge\alpha+\beta\right]$} \noindent and the cohomology class is unambiguous despite a representative being expressed using a particular $Z$ defining function. \end{example} 

 \begin{example}Let $x$ be a $Z$ defining function on a manifold $(M,Z)$ where $Z$ separates $M$. Consider the scattering-form   $v=\dfrac{dx}{x^{k+1}}\wedge\alpha+\dfrac{\beta}{x^k}$ for $\alpha,\beta\in\Omega^*(M)$. We express any other $Z$ defining function $\widetilde{x}$ as $x=f\widetilde{x}$ for some nowhere vanishing function $f\in \mathcal{C}^{\infty}(M)$. Then
  
  \centerline{$\dfrac{dx}{x^{k+1}}\wedge\alpha+\dfrac{\beta}{x^k}=\dfrac{d\widetilde{x}}{\widetilde{x}^{k+1}}\wedge\left(\dfrac{1}{f^k}+\dfrac{\partial_{\widetilde{x}}  f}{f^{k+1}}\widetilde{x}\right)\alpha+\dfrac{1}{\widetilde{x}^k}\left(\dfrac{d_Zf}{f^{k+1}}\wedge\alpha+\dfrac{\beta}{f^k}\right).$} \noindent
 Thus the $\alpha$ and $\beta$ decomposition is highly dependent on the choice of $Z$ defining function. We will show in the course of proving Theorem \ref{scderham} that the only real ambiguity above is the scaling of $\alpha$ by $\dfrac{1}{f^k}$. This apparent dependence on $x$ is accounted for by the density bundle in equation (\ref{eq:Arestriction}), the restriction of a rescaled bundle to $Z$. A more complete discussion of densities can be found in section 4.5 of \cite{Melrose93}. A $Z$ defining function $x$ induces a trivialization of $(N^*Z)^s\to Z$. If $\widetilde{x}=fx$ is another $Z$ defining function, then $(d\widetilde{x})^s=f^s(dx)^s$ as sections of $(N^*Z)^s$. Because $\displaystyle{{}^{sc}TM|_Z\simeq \left(TZ\oplus (N^*Z\otimes TM|_Z/TZ)\right)\otimes N^*Z},$ a density bundle  appears in the cohomology groups to account for changes of $Z$ defining function and our presentation of the cohomology is independent of $Z$ defining function. 
 \end{example}

%\begin{definition} Let $p\in Z$. The space of $s$-densities on $N^*_pZ$ is, for $s\in\mathbb{R}$, the space 

%\centerline{$|N^*_pZ|^s=\left\{{ \psi:N^*_pZ\setminus\left\{{0}\right\}\to\mathbb{R}~\biggr\rvert\psi(\lambda V)=|\lambda|^s\psi(V) ~\forall~V\in N^*_pZ\setminus\left\{{0}\right\}, \lambda\neq 0 }\right\}$} \noindent and the \emph{$s$-densities on $N^*Z$ over $Z$} is the bundle
 
 %\centerline{$\displaystyle{|N^*Z|^s=\bigsqcup_{p\in Z}|N^*_pZ|^s.}$} \end{definition} 

\subsection{Techniques for computation} Once we have identified a Lie algebroid where we can lift a Poisson bivector of interest, we need an explicit description of its Lie algebroid cohomology. In this section we will briefly discuss the computational techniques that we have found useful. \\

\noindent {\bf \emph{ Split short exact sequences.}} Recall that  an $\mathcal{A}$-anchored Lie algebroid $(\mathcal{B},[\cdot,\cdot]_\mathcal{B},\rho_\mathcal{B})$,  admits a `factoring' of its anchor map through the anchor map of the Lie algebroid $(\mathcal{A},[\cdot,\cdot],\rho_{\mathcal{A}})$: there exists a Lie algebroid map $\phi:\mathcal{B}\to\mathcal{A}$ such that $\rho_\mathcal{B}=\rho_\mathcal{A}\circ\phi$. We will only be considering instances when the factoring map $\phi$ is an isomorphism away from a collection of smooth hypersurfaces $Z\subset M$. In these cases we compute the Lie algebroid cohomology of $\mathcal{B}$ by forming a short exact sequence of complexes:  \begin{equation}\label{eq:ses}0\to {}^{\mathcal{A}}\Omega^k(M)\to {}^{\mathcal{B}}\Omega^k(M)\to\mathscr{C}^k(\mathcal{B},\mathcal{A})\to 0 \end{equation} \noindent where $\mathscr{C}^k(\mathcal{B},\mathcal{A})$ is the quotient ${}^{\mathcal{B}}\Omega^k(M)/{}^{\mathcal{A}}\Omega^k(M)$. The quotient complex inherits a differential $d_{\mathscr{C}}$ induced from the differential $d_\mathcal{B}$ on ${}^{\mathcal{B}}\Omega^k(M)$. Indeed, if $P$ is the projection $P:{}^{\mathcal{B}}\Omega^k(M)\to\mathscr{C}^k(\mathcal{B},\mathcal{A})$, then $d_\mathscr{C}(\eta)=P(d_\mathcal{B}(\theta))$ where $\theta\in{}^\mathcal{B}\Omega^k(M)$ is any form such that $P(\theta)=\eta$.

This short exact sequence allows us to break the computation of cohomology into a series of tractable bite-sized pieces.   In particular, the short exact sequence induces a long exact sequence in cohomology: 

\centerline{$\dots \xrightarrow{\partial} {}^{\mathcal{A}}H^n(M) \to {}^{\mathcal{B}}H^n(M) \to H^n(\mathscr{C})\xrightarrow{\partial} {}^{\mathcal{A}}H^{n+1}(M)\to\dots $} \noindent When the short exact sequence above is split, the boundary map $\partial$ is trivial and we have that 

\centerline{${}^{\mathcal{B}}H^k(M)\simeq {}^\mathcal{A}H^k(M)\oplus H^k(\mathscr{C})$.} 

Any Lie algebroid is trivially $TM$-anchored. Consider the $0$-tangent bundle. The anchor map is evaluation in $T M$. The sequence in equation (\ref{eq:ses}) becomes 

\centerline{$0\to \Omega^k(M)\to {}^{0}\Omega^k(M)\to\mathscr{C}^k\to 0$.} \noindent This sequence happens to be split. However, we typically can `do better' in our choice of split short exact sequence. Given the $b$-tangent bundle of a hypersurface $Z$ in manifold $M$, the Mazzeo-Melrose theorem tells us that the $b$-cohomology ${}^bH^p(M)\simeq H^p(M)\oplus H^{p-1}(Z)$. Since we already understand $b$-cohomology, we would be left with an easier computation if we anchored the 0-tangent bundle with the $b$-tangent bundle instead of the tangent bundle. Since the anchor maps for both of these algebroids are evaluation, the 0-tangent bundle is $b$-anchored and we have a short exact sequence 

\centerline{$0\to {}^b\Omega^k(M)\to {}^{0}\Omega^k(M)\to\mathscr{C}^k\to 0$.} 

\noindent {\bf \emph{ Taylor series expansion at a hypersurface.}} Another powerful tool in computing Lie algebroid cohomology is a Taylor series expansion. The idea is to consider an $\mathcal{A}$-anchored Lie algebroid $(\mathcal{B},[\cdot,\cdot]_\mathcal{B},\rho_\mathcal{B})$ where the factoring map $\phi:\mathcal{B}\to\mathcal{A}$ is an isomoprhism outside of a hypersurface (or collection of hypersurfaces) $Z\subset M$.  In a tubular neighborhood $U\supset Z$, we express a $\mathcal{B}$-form $\mu$ as $\mu = \eta + \theta$ where  $\eta \in\mathscr{C}^k$ and $\theta\in {}^\mathcal{A}\Omega^k(M)$. We say that $\eta$ is the singular part of $\mu$ and $\theta$ is the regular part. 

We can always construct a Taylor series for Lie algebroids obtained by a sequence of Melrose rescalings of $TM$. Recall that Taylor's theorem tells us for any integer $k\geq 1$ and any real smooth function $f:\mathbb{R}\to \mathbb{R}$, there exists a function $h_k:\mathbb{R}\to\mathbb{R}$ such that $$f(x)=f(0)+\dfrac{\partial f}{\partial x}(0)x +\dfrac{\partial^2 f}{\partial x^2} (0)\dfrac{x^2}{2!}+\dots + \dfrac{\partial^k f}{\partial x^k} (0)\dfrac{x^k}{k!}  +h_k(x)x^k$$ and $\displaystyle{\lim_{x\to 0}h_k(x)=0}$. In the setting of Lie algebroid forms, while we have coefficient functions that are  multivariate (i.e. locally $f:\mathbb{R}^{2n}\to \mathbb{R}$), we will only consider Taylor series expansions in the $x$-direction: Let $z=(z_1,\dots,z_n)$ denote a set of local coordinates for the hypersurface $Z=\left\{x=0\right\}$.  If we have a smooth function $f(x,z)$ and an integer $k\geq 1$,  $$f(x,z)=f(0,z)+\dfrac{\partial f}{\partial x}(0,z)x +\dfrac{\partial^2 f}{\partial x^2} (0,z)\dfrac{x^2}{2!}    +\dots +\dfrac{\partial^k f}{\partial x^k} (0,z)\dfrac{x^k}{k!} +h_k(x,z)x^k$$ and $\displaystyle{\lim_{x\to a}h_k(x)=0}$. Consequently, using a smooth partition of unity, we can provide Taylor series expansions in $x$ at the hypersurface $Z$ for forms of rescaled tangent bundles. To see an example of a Taylor series of a rescaled-tangent-bundle form, see for instance Theorem \ref{scderham}.

\subsection{Nondegenerate lifts \& $\mathcal{A}$-symplectic forms}  

\begin{definition} Given a Poisson manifold $(M,\pi)$ and a Lie algebroid $(\mathcal{A},[\cdot,\cdot],\rho)$ over $M$, we call $\pi$ an {\bf $\mathcal{A}$-symplectic structure} if there exists a non-degenerate Poisson $\mathcal{A}$-lift $\pi_{\mathcal{A}}$.
\end{definition} 

Given a non-degenerate Poisson structure $\pi_{\mathcal{A}}$, the sharp map $\pi_\mathcal{A}^\sharp:\mathcal{A}^\ast\to\mathcal{A}$ is an isomorphism. We denote the inverse map $(\pi_\mathcal{A}^\sharp)^{-1}: \mathcal{A}\to\mathcal{A}^\ast$ by $\omega^\flat_\mathcal{A}$. The associated two form $\omega_\mathcal{A}\in{}^{\mathcal{A}}\Omega^2(M)$ is an $\mathcal{A}$-symplectic form.

 \begin{remark} Given an $\mathcal{A}$-symplectic structure $(M,\pi)$, the condition that the $\mathcal{A}$-lift $\pi_\mathcal{A}$ is Poisson on the Lie algebroid $\mathcal{A}$, i.e. $[\pi_{\mathcal{A}},\pi_{\mathcal{A}}]=0$, is equivalent to the corresponding $\mathcal{A}$-symplectic form $\omega_\mathcal{A}$ being closed, i.e. $d\omega_\mathcal{A}=0$. This fact follows immediately from the definition of the exterior derivative $d$ for $\mathcal{A}$-forms, which uses the algebroid Lie bracket and anchor map in the exact same way as the classical exterior derivative on the tangent bundle. \end{remark} 

\subsection{Moser}   When we consider a Lie algebroid that is isomorphic to the tangent bundle away from a hypersurface (or collection of hypersurfaces), we can adapt the standard Moser technique of symplectic geometry to establish normal forms for an $\mathcal{A}$-symplectic form $\omega_{\mathcal{A}}$. 

\begin{lemma}{\bf ($\mathcal{A}$-Moser)}\label{AMoser} Let $(\mathcal{A},[\cdot,\cdot]_\mathcal{A},\rho_\mathcal{A})$ be a Lie algebroid over $M$ with $Z\subset M$ a hypersurface and assume that that $\rho_\mathcal{A}$ is an isomorphism on $M\setminus Z$. Let $\omega_1, \omega_2\in {^\mathcal{A}\Omega^2(M)}$ be $\mathcal{A}$-symplectic forms  such that  \begin{itemize} \item $[\omega_1] = [\omega_2]\in {}^{\mathcal{A}}H^2(U)$ for some tubular neighborhood $U$ of $Z$, and \item $\omega_1|_Z = \omega_2|_Z$

\end{itemize}  

\noindent Then there exists a neighbourhood $U^\prime$ of $Z$  and a Lie algebroid isomorphism $\phi: (\mathcal{A}|_{U^\prime} , \omega_1)\to (\mathcal{A}|_U, \omega_2)$ such that $\phi^\ast\omega_1 = \omega_2$. \end{lemma} 

\begin{proof} Let $U$ be a tubular neighborhood of $Z$ such that $[\omega_1]=[\omega_2]\in {}^{\mathcal{A}}H^2(U)$. Consider the convex combination of $\omega_1$ and $\omega_2$: 

\centerline{$\omega_t=\omega_1+t(\omega_2-\omega_1)$.}

\noindent The forms $\omega_1$ and $\omega_2$ representing the same class in $\mathcal{A}$-cohomology is insufficient for $\omega_t$ to be $\mathcal{A}$-symplectic. However, since $\omega_1|_Z=\omega_2|_Z$, there exists some neighborhood $U\supset U_0\supset  Z$ such that $\omega_t$ is non-degenerate for all $t\in [0,1]$.  Note that  $\omega_2-\omega_1$ is exact as an $\mathcal{A}$-form on $U_0$ because $[\omega_1]=[\omega_2]\in {}^{\mathcal{A}}H^2(U)$. Thus, 

\centerline{$\dfrac{d}{dt}\omega_t=\omega_2-\omega_1=d\sigma$} \noindent for some degree 1 $\mathcal{A}$-form $\sigma$. Following the standard Moser argument (Sec. 7.3 \cite{da2001lectures}), we want to solve the equation  

\centerline{$\mathcal{L}_{v_t}\omega_t +
\dfrac{d}{dt}\omega_t=di_{v_t}\omega_t +
d\sigma=0$.} \noindent Note that the non-degeneracy of $\omega_t$ implies the existence of a smooth family of sections $v_t \in \Gamma(\mathcal{A})$ satisfying 

\centerline{$i_{v_t}\omega_t + \sigma= 0$.}  \noindent

Next, we integrate $v_t$ to a family $\phi_t$ of Lie-algebroid automorphism of $\mathcal{A}$.  
See the appendix of \cite{crainic2003integrability} for information regarding integrating time-dependent sections of a Lie-algebroid. 
\end{proof}

\subsection{Almost regular Poisson structures} \label{AlmReg}

Scattering-symplectic structures are an example of almost regular Poisson structures in the sense of  Iakovos Androulidakis and Marco Zambon  \cite{AndroulidakisZambon17}. These are a large class of structures that includes $b$-symplectic (log-symplectic) \cite{guillemin2014symplectic} and $b^k$-symplectic \cite{scott2016geometry}. In this section we will briefly discuss some of the features of almost regular Poisson structures that will assist us in our study of scattering Poisson structures in particular. \begin{definition}\label{ARdef}(Theorem B, \cite{AndroulidakisZambon17}.) A Poisson manifold $(M, \pi)$ is {\bf almost regular} if  \begin{enumerate}\item the subset $M_{reg}$ where $\pi^\sharp$ has maximal rank is dense in $M$, and \item there is a unique integrable distribution $D$ of $M$, such that at every $x\in M_{reg}$ the symplectic
leaf of $(M,\pi)$ at $x$ is an open subset of the leaf of $D$ at $x$.\end{enumerate} \end{definition} Note that Androulidakis and Zambon provide an equivalent definition to the one provided here. However, as personal preference, we define almost regular Poisson structures via their characterization theorem. 

A sympectic manifold is trivially almost regular because $M_{reg}=M$. In this paper, we focus on Poisson structures that are non-degenerate away from a hypersurface $Z\subset M$. Thus $D=TM$, the entire tangent bundle of $M$. Accordingly, one could also say that scattering Poisson structures are `trivially' almost-regular. 

Our primary tool to study scattering Poisson structures will be Lie algebroids because they provide a context for us to view a Poisson structure as non-degenerate. Once we have a non-degenerate bivector, we have hope of adapting standard techniques of symplectic geometry to the Lie Algebroid setting. In particular, Lie algebroids whose anchor maps are an isomorphism almost everywhere will prove most useful for us. One class of Lie algebroids that are quite similar to the tangent bundle are called rescaled tangent bundles.

\subsection{Poisson cohomology \& rigged Lie algebroids} Recall (\cite{DufourZung05}, p. 39) for a general Poisson manifold $(M,\pi)$, the Poisson cohomology $H^*_\pi(M)$ is defined as the cohomology groups of the Lichnerowicz complex: This complex is formed using $\mathcal{V}^k(M):=\mathcal{C}^{\infty}(M;\wedge^k TM)$, smooth multivector fields on $M$.

\centerline{$\dots\to\mathcal{V}^{k-1}(M)\xrightarrow{d_{\pi}}\mathcal{V}^{k}(M)\xrightarrow{d_{\pi}}\mathcal{V}^{k+1}(M)\to\dots$} \noindent The differential 

\centerline{$d_{\pi}:  \mathcal{V}^k(M)\to\mathcal{V}^{k+1}(M)$} \noindent is defined as 

\centerline{$d_{\pi}=[\pi,\cdot],$} \noindent where $[\cdot,\cdot]$ is the Schouten bracket extending the standard Lie bracket on vector fields $\mathcal{V}^1(M)$.

We can easily generalize this cohomology to any Lie algebroid $(\mathcal{A},[\cdot,\cdot],\rho)$ over $M$. Given an $\mathcal{A}$-Poisson bivector $\pi_\mathcal{A}\in\mathcal{C}^\infty(M;\wedge^2 \mathcal{A})$, we define the $\mathcal{A}$-Poisson cohomology in the same way we define Poisson cohomology: The operator $d_{\pi_\mathcal{A}} = [\pi_\mathcal{A},\cdot]$, where $[\cdot,\cdot]$ is the $\mathcal{A}$-Schouten bracket, defines a differential on $\mathcal{A}$-multivector fields   $^{\mathcal{A}}\mathcal{V}^k(M)$. The {\bf $\mathcal{A}$-Poisson cohomology} of $(M,\pi_\mathcal{A})$, denoted ${}^{\mathcal{A}}H^*_{\pi}(M)$, is the cohomology of the {\bf $\mathcal{A}$-Lichnerowicz complex}:

\centerline{ $\dots\to{^{\mathcal{A}}\mathcal{V}^{k-1}(M)}\xrightarrow{d_{\pi_\mathcal{A}}}{^{\mathcal{A}}\mathcal{V}^{k}(M)}\xrightarrow{d_{\pi_\mathcal{A}}}{^{\mathcal{A}}\mathcal{V}^{k+1}(M)}\to\dots$}

\subsection{Modular vector fields} Once we have local normal forms for our Poisson structure, it is much easier to compute standard Poisson invariants. One of the easier examples is called the modular vector field. 

To compute the modular vector field we will use the description provided in section 2.6 of Dufour-Zung \cite{DufourZung05}. Recall that given a volume form $\Lambda\in\Omega^{2n}(M)$, we can define maps 

\centerline{$\Lambda^\flat:
\mathcal{V}^p(M)\to \Omega^{2n-p}(M) \text{ and }\Lambda^\sharp:\Omega^{2n-p}(M)\to \mathcal{V}^p(M)$}

 \noindent as follows: 

\centerline{$\Lambda^\flat(A)=i_A\Lambda\text{ and } \Lambda^\sharp(\eta)=-i_\eta\widehat{\Lambda}$} \noindent where $\hat{\Lambda}\in \mathcal{V}^{2n}(M)$ is the dual to $\Lambda$, i.e. $<\Lambda, \widehat{\Lambda}>=1$. Then the modular vector field $D_\pi\Lambda$ is given by $\Lambda^\sharp\circ d\circ \Lambda^\flat(\pi).$ 

Note that the modular vector field depends on the choice of volume form $\Lambda$. Consider another volume form $\widetilde{\Lambda}=f\Lambda$ for a positive function $f\in\mathcal{C}^\infty(M)$. Then 

\centerline{$D_{\pi}\widetilde{\Lambda} =D_{\pi}\Lambda -\pi^\sharp(\ln|f|)  $.}

But note that $f$ is non-zero, so $\ln|f|$ is a smooth function. Further, $-\pi^\sharp(\ln|f|)=d_\pi(\ln|f|)$, an exact element in $\mathcal{V}^1(M)$. Thus  the Poisson cohomology class $[D_{\pi}\Lambda]\in H_\pi^1(M)$ does not depend on $\Lambda$. 

\subsection{Computing Poisson cohomology} 

Because Poisson cohomology is quite challenging to compute, there are only very select cases where the answer is known. In the case of a symplectic manifold where the Poisson bivector is non-degenerate, the Poisson cohomology is isomorphic to the de Rham cohomology of the manifold $M$. The non-degeneracy of $\pi$ allows us to define an isomorphism $T^*M\to TM$ that induces this isomorphism in cohomology: $H^p(M)\simeq H^p_{\pi}(M)$. 

In the case when an $\mathcal{A}$-Poisson bivector $\pi_\mathcal{A}$ is non-degenerate, we recover an isomorphism of complexes analogous to the symplectic case. 

\begin{proposition} Let $(M,\omega_\mathcal{A})$ be an $\mathcal{A}$-symplectic manifold, and $\pi_\mathcal{A}$ the corresponding non-degenerate 
$\mathcal{A}$-Poisson structure. Then, the $\mathcal{A}$-Poisson cohomology $^{\mathcal{A}}H^*_\pi(M)$ is
isomorphic to the $\mathcal{A}$-cohomology $^{\mathcal{A}}H^*(M)$. 
\end{proposition} 

 As in the classic case when $\mathcal{A} = TM$, this proposition follows from the fact that we have an isomorphism of complexes induced by the map $\omega_\mathcal{A}^\flat:\mathcal{A}\to \mathcal{A}^\ast$. The full proof can be found in Theorem 30 of [\cite{guillemin2014symplectic}], simply replace every instance of the $b$-tangent bundle with Lie algebroid $\mathcal{A}$. 

Thus far we have constructed the following diagram of complexes. 

\centerline{$\xygraph{!{<0cm,0cm>;<1cm, 0cm>:<0cm,1cm>::}
!{(0,0)}*+{{(\mathcal{V}^*,d_\pi)}}="b"
!{(0,1.5)}*+{{({}^\mathcal{A}\mathcal{V}^*,d_{\pi_\mathcal{A}})}}="c"
!{(3,1.5)}*+{{({}^\mathcal{A}\Omega^*,d)}}="d"
"c":"b"^{\rho}_{}
"d":"c"_{\pi_\mathcal{A}^{\sharp}}^{\simeq} }$}

In very special cases, $\rho$ induces an isomorphism between $\mathcal{A}$-Poisson and Poisson cohomology. One such instance is the $b$-symplectic setting, a fact first shown by Ioan M\u{a}rcut and Boris Osorno Torres \cite{muarcuct2014deformations}. However, for general $\mathcal{A}$-Poisson structures, this will not be the case. In order to compute the Poisson cohomology of a generically symplectic Poisson manifold $(M,\pi)$, we will construct a complex $({}^\mathcal{R}\Omega^*,d)$, called the Rigged de Rham complex, that is isomorphic to the Lichnerowicz complex. 

\centerline{$\xygraph{!{<0cm,0cm>;<1cm, 0cm>:<0cm,1cm>::}
!{(0,0)}*+{{(\mathcal{V}^*,d_\pi)}}="b"
!{(0,1.5)}*+{{({}^\mathcal{A}\mathcal{V}^*,d_{\pi_\mathcal{A}})}}="c"
!{(3,1.5)}*+{{({}^\mathcal{A}\Omega^*,d)}}="d"
!{(3,0)}*+{{({}^\mathcal{R}\Omega^*,d)}}="e"
"c":"b"^{\rho}_{}
"d":"c"_{\pi_\mathcal{A}^{\sharp}}^{\simeq}
"e":"b"_{\pi^{\sharp}}^{\simeq}  }$} \noindent While computing cohomology using the Lichnerowicz complex can be quite intractable, understanding rigged Lie algebroid cohomology is typically easier.  

We begin by considering the $\mathcal{C}^\infty(M)$-submodule of $\mathcal{V}^1(M)$ generated by Hamiltonian vector fields, that is the $\mathcal{C}^\infty(M)$-module of
vector fields \begin{equation}\label{eq:rigged} \pi^\sharp(\Gamma(T^*M)).\end{equation}

Androulidakis and Zambon show that almost regular Poisson structures (Definition 2.5 \cite{AndroulidakisZambon17}) are precisely those whose module of Hamiltonian vector fields is projective, i.e. equation \ref{eq:rigged} defines the sections of a vector bundle. We will provide a sketch of their proof for the case when $(M,\pi)$ is generically symplectic. 

\begin{proposition}\label{propbundle} If $(M,\pi)$ is generically symplectic, then equation \ref{eq:rigged} defines the sections of a vector bundle $\mathcal{R}_\pi$. 
\end{proposition} 

\begin{proof} We will proceed using the Serre-Swan theorem: By arguing that $\pi^\sharp(\Gamma(T^\ast M))$ is a locally free projective module over $\mathcal{C}^\infty(M)$, we can conclude that  $\pi^\sharp(\Gamma(T^\ast M))$ is the space of sections of a vector bundle $\mathcal{R}_\pi$. First consider that $\pi^\sharp:\Gamma(T^\ast M) \to \Gamma(TM)$ is an injective map because $\pi$ is almost everywhere symplectic. Also note that $\pi^\sharp$ is a map of $\mathcal{C}^\infty(M)$-modules. Thus $\pi^\sharp(\Gamma(T^\ast M))$ is locally free and projective. \end{proof} 

\begin{definition} Given a generically symplectic Poisson manifold $(M,\pi)$, the {\bf rigged Lie algebroid $(\mathcal{R}_\pi, [\cdot,\cdot],\rho)$} of $\pi$ is the vector bundle $\mathcal{R}_\pi$ equipped with anchor map $\rho$ being evaluation in $TM$ and Lie bracket $[\cdot,\cdot]$ induced by the standard Lie bracket on $TM$.\end{definition} 

\begin{remark} The proof of Proposition \ref{propbundle} shows that if $\pi$ is generically symplectic, then $\mathcal{R}_\pi$ is canonically isomorphic to $T^\ast M$. In fact, as we will show shortly, the Lie algebroid structure that we have equipped $\mathcal{R}_\pi$ with makes  $(\mathcal{R}_\pi, [\cdot,\cdot],\rho)$  isomorphic to the standard Poisson Lie algebroid of the Poisson structure $\pi$. Accordingly, the face value of $(\mathcal{R}_\pi, [\cdot,\cdot],\rho)$ is not high. Instead, we value $\mathcal{R}_\pi$ because of the computations it enables us to complete. 

The name `rigged' is inspired from rigged Hilbert spaces, a construction designed to link distributions and square-integrable functions in spectral theory. In a similar fashion, the rigged Lie algebroid is a device that links the multivectors used to compute Poisson cohomology to singular differential forms. In particular, consider a Lie algebroid $\mathcal{A}$ with anchor map an almost everywhere isomorphism. Let $Z$ denote the singular locus of the anchor map. On $M\setminus Z$, the Lie algebroid forms ${}^\mathcal{A}\Omega^\ast (M)$ can be viewed as a subcomplex of $\Omega^\ast (M)$. In specific examples, we will characterize the types of singularities that the forms ${}^\mathcal{A}\Omega^\ast (M)$  exhibit along $Z$. This understanding allows us to calculate Poisson cohomology in a much easier fashion than wrestling with the Lichnerowicz complex directly.
\end{remark} 

\begin{lemma}\label{RiggedLemma} If $(M,\pi)$ is generically symplectic, then the Lie algebroid cohomology of the rigged Lie algebroid computes the Poisson cohomology of $\pi$. \end{lemma} 

\begin{proof} To build a map between the two complexes will require several steps. \vspace{2ex}

\noindent\begin{minipage}{.52\linewidth} Begin by considering (1):  since $\pi$ is generically symplectic, the set where $\pi^\sharp$ - viewed as a map between vector bundles - defines an isomorphism is dense. This induces (2), an injective map on sections. We obtain (3), a bijection, by  restricting to the codomain. (4) is the inverse map. Recall that the sheaf of sections $\Gamma(\mathcal{R})$ defines a vector bundle. Further, notice that $\pi^\sharp$ in step (2) is a $\mathcal{C}^\infty(M)$-linear map since it was induced by a bundle morphism. The operation of restriction still gives a $\mathcal{C}^\infty(M)$-linear map. 

\end{minipage}\hfill\begin{minipage}{.05\linewidth} ~

(1)\vspace{1ex}

(2)\vspace{1ex}

(3)\vspace{1ex}

(4)\vspace{1ex}

(5)\vspace{1ex}

(6)\vspace{1ex}

(7)

\end{minipage}\begin{minipage}{.38\linewidth}
{\singlespace
\begin{center}

\scalebox{.9}{$
\pi^\sharp:T^\ast M\to TM$}

\rotatebox{-90}{\hspace{-2ex}$\rightsquigarrow$}

\scalebox{.9}{$ \pi^\sharp:\Gamma(T^\ast M)\to \Gamma(TM)$}

\rotatebox{-90}{\hspace{-2ex}$ \rightsquigarrow $}

 \scalebox{.9}{$\pi_{\text{\tiny rest}}^\sharp:\Gamma(T^\ast M)\to  \pi^\sharp(\Gamma(T^\ast M))\simeq\Gamma(R)$}
 
\rotatebox{-90}{ \hspace{-2ex}$ \rightsquigarrow $}

\scalebox{.9}{$(\pi^\sharp_{\text{\tiny rest}})^{-1}:\Gamma(\mathcal{R})\to \Gamma(T^\ast M)$}

\rotatebox{-90}{\hspace{-2ex}$\rightsquigarrow $}

\scalebox{.9}{$(\pi^\sharp_{\text{\tiny rest}})^{-1}:\mathcal{R}\to T^\ast M$}

\rotatebox{-90}{\hspace{-2ex}$\rightsquigarrow $}

\scalebox{.9}{$\omega^\flat:TM\to\mathcal{R}^\ast$}

\rotatebox{-90}{\hspace{-2ex}$ \rightsquigarrow$}

\scalebox{.9}{$ \omega^\flat:\Gamma(TM)\to\Gamma(\mathcal{R}^\ast)$}

\end{center}}\end{minipage}\vspace{2ex}

\noindent Thus (4) is also a $\mathcal{C}^\infty(M)$-linear map and induces (5), an invertible bundle map. We dualize to obtain (6).  This induces (7), a map on sections. 

By taking exterior powers of the map $\omega^\flat$, we can extend it to a $\mathcal{C}^\infty(M)$-linear isomorphism
 
 \centerline{$\bar{\omega}:\mathcal{V}^p(M)\to {^{\mathcal{R}}\Omega^p(M)}.$}  \noindent This will be the desired isomorphism of complexes. 
 
Next we will show that for any smooth multivector field $\eta$ on a given generically symplectic Poisson manifold $(M,\pi)$, we have $\bar{\omega}(d_\pi(\eta))=-d(\bar{\omega}(\eta))$.  We will proceed by induction on the degree of $\eta$ and by using the Leibniz rule. Let $\eta$ be a degree $0$ form, that is $\eta\in\mathcal{C}^\infty(M)$. Then $\bar{\omega}(\eta)=\eta$ and  $-d(\bar{\omega}(\eta))=-d\eta$. Consider $d_\pi(\eta)= [\pi,\eta]=-X_\eta$, the Hamiltonian vector field of $\eta$. On $M_{reg}$, we have that $\pi$ defines a symplectic form $\omega$. Further, on $M_{reg}$, $\bar{\omega}$ is given by contraction with $\omega$. Thus on $M_{reg}$, we have that 

\centerline{$\bar{\omega}(-X_\eta) = -i_{X_\eta}\omega =-d\eta$,} \noindent by the correspondence of Hamiltonian vector fields between non-degenerate $\pi$ and corresponding symplectic form $\omega$. By continuity this identity extends to all of $M$. If $\eta =d_\pi f$ is an exact 1-vector field, then $\bar{\omega}(d_\pi(d_\pi f))=\bar{\omega}(0)=0$ and 

\centerline{$d(\bar{\omega}(d_\pi f)) = d(\bar{\omega}(X_f))=d(df)=0$.} \noindent By the Leibniz rule, the statement is true for all multivector fields. Thus we have shown that, up to a sign, the map $\bar{\omega}$ is an isomorphism that intertwines the differential $d$ of the rigged algebroid $\mathcal{R}$ de Rham complex with the differential operator $d_\pi$ of the Lichnerowicz complex. Hence $\bar{\omega}: H^p_\pi(M)\to {}^{\mathcal{R}}H^p(M)$ is an isomorphism.\end{proof}

%The notation for $\bar{\omega}$ suggests it arises from a possibly `singular two-form'. Indeed, in the case where we have lifted $\pi$ non-degenerately to a Lie algebroid $\mathcal{A}$, the map $\bar{\omega}$ is given by an $\mathcal{A}$-symplectic form $\omega_\mathcal{A}$. 

\begin{remark} Equation \ref{eq:rigged} defines a vector bundle $\mathcal{R}_\pi$ for any $(M,\pi)$ almost regular Poisson. However, if the maximal rank of $\pi$ is less than the dimension of $M$, in general the Lie algebroid cohomology of $\mathcal{R}_\pi$ does not directly compute the Poisson cohomology of $\pi$. This is because $\mathcal{R}_\pi$ encodes information about the leaves of the regular foliation arising from distribution $D$ and does not contain `transverse' information.   \end{remark} 

%\subsection{Modular class} As we discussed above, while the modular vector field depends on our choice of volume form $\Lambda$, the class $[D_\pi\Lambda]\in H^1_\pi(M)$ - called the modular class of $\pi$ - does not. The rigged algebroid provides a clear way to see if $[D_\pi\Lambda]$ is trivial or not. Poisson manifolds $(M,\pi)$ with trivial modular class are called unimodular. A Poisson manifold is unimodular if and only if $M$ admits a volume form invariant under the Hamiltonian flows of $\pi$. 

\section{Scattering symplectic geometry}

In this section, we will explore in greater depth the properties of scattering-symplectic manifolds.

\subsection{Scattering symplectic forms}\label{scatteringsymplecticgeometry}

Given a manifold and hypersurface $(M,Z)$, a \emph{scattering symplectic structure} on $M$ is a closed, non-degenerate, degree-2 scattering-form $\omega\in \mathcal{C}^\infty(M;\wedge^2({}^{sc}T^\ast M))=:{}^{sc}\Omega^2(M)$.

Similar to the role that second degree de Rham cohomology $H^2(M)$ plays in determining the behavior of a symplectic form $\omega$, the second  degree scattering-cohomology  ${}^{sc}H^2(M)$ will play a key role in determining the behavior of a scattering-symplectic form $\omega$. Next we obtain an explicit description of this cohomology.

\subsubsection{Scattering de-Rham cohomology} 

Our computation of scattering de-Rham cohomology utilizes the following result of Rafe Mazzeo and Richard Melrose (\cite{Melrose93}, Prop. 2.49).  

\begin{theorem}[Mazzeo-Melrose] Let ${}^bTM$ be the b-tangent bundle associated to $(M,Z)$. The $b$-cohomology is 

\centerline{$^{b}H^p(M)\simeq H^p(M)\oplus H^{p-1}(Z).$} 
\end{theorem}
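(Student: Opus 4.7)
The plan is to construct a short exact sequence of chain complexes
\[
0 \to \Omega^*(M) \xrightarrow{\iota} {}^b\Omega^*(M) \xrightarrow{r} \Omega^{*-1}(Z) \to 0,
\]
and to read off the theorem from its long exact sequence. Here $\iota$ is the inclusion (every smooth form is a b-form). To define the residue $r$, fix a tubular neighborhood $\tau \cong Z\times(-\varepsilon,\varepsilon)_x$ of $Z$ with projection $\pi:\tau\to Z$ and defining function $x$; by Remark~\ref{tseriesrmk} any b-form decomposes near $Z$ as $\omega = \frac{dx}{x}\wedge\alpha + \beta$ with $\alpha,\beta$ smooth forms free of $dx$-factors, and we set $r(\omega):=\alpha|_Z$. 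The computation $d\omega = \frac{dx}{x}\wedge(x\partial_x\beta - d_Z\alpha) + d_Z\beta$ (as in the Example immediately preceding the theorem) gives $r\circ d = -d_Z\circ r$, so $r$ is a chain map up to an immaterial sign.

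Exactness is direct: $r(\omega)=0$ forces $\alpha = x\tilde\alpha$, in which case $\frac{dx}{x}\wedge\alpha = dx\wedge\tilde\alpha$ is smooth, so $\omega\in\Omega^*(M)$; and conversely any smooth form has no $\frac{dx}{x}$-component. Surjectivity follows by exhibiting the explicit preimage $\omega_\alpha := \chi\frac{dx}{x}\wedge\pi^*\alpha$ for any $\alpha\in\Omega^{p-1}(Z)$, where $\chi$ is a bump function equal to $1$ near $Z$ and supported in $\tau$.

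The crux is showing that the connecting homomorphism $\delta:H^{p-1}(Z)\to H^p(M)$ vanishes; granting this, the long exact sequence decouples into short exact sequences $0\to H^p(M)\to {}^bH^p(M)\to H^{p-1}(Z)\to 0$, which split over $\mathbb{R}$, yielding the claimed isomorphism. For closed $\alpha$, the lift $\omega_\alpha$ above satisfies $d\omega_\alpha = d\chi\wedge\frac{dx}{x}\wedge\pi^*\alpha$ because $d(\pi^*\alpha)=0$. Now $d\chi$ is supported on a compact set on which $|x|$ is bounded away from $0$, so one can choose $\psi \in \mathcal{C}^\infty(M)$ agreeing with $\log|x|$ on $\mathrm{supp}(d\chi)$; then $d\chi\wedge\frac{dx}{x} = d\chi\wedge d\psi = d(\chi\, d\psi)$, and using $d\pi^*\alpha=0$ once more gives $d\omega_\alpha = d\bigl(\chi\, d\psi\wedge\pi^*\alpha\bigr)$, exact as a smooth form. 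Hence $\delta[\alpha]=0$. The main obstacle is this vanishing: one has to engineer the lift so that its exterior derivative not only lands in $\Omega^*(M)$ but is exact there, and the replacement $\frac{dx}{x}=d\psi$ on the compact support of $d\chi$ is precisely the device that removes the apparent logarithmic singularity.
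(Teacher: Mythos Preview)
Your proof is correct and follows the standard argument for the Mazzeo--Melrose theorem. Note, however, that the paper does not prove this result: it is quoted as Proposition~2.49 from Melrose's book and used as a black box in the computation of the scattering de~Rham cohomology (Theorem~\ref{scderham}). So there is no in-paper proof to compare against directly.

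That said, it is worth contrasting your approach with the paper's method for the analogous scattering computation. In proving Theorem~\ref{scderham}, the paper sets up the short exact sequence $0\to{}^b\Omega^*\to{}^{sc}\Omega^*\to\mathscr{C}^*\to 0$ and then, rather than working with the long exact sequence and a connecting homomorphism, observes that a choice of tubular neighborhood $\tau$ induces a \emph{splitting at the level of complexes}: the ``regular'' and ``singular'' parts $\mathcal{R}_b$, $\mathcal{S}_b$ commute with the differential, so ${}^{sc}\Omega^*(M)\cong{}^b\Omega^*(M)\oplus\mathscr{C}^*$ as complexes and the cohomology splits immediately. The same device would work one level down for your sequence $0\to\Omega^*\to{}^b\Omega^*\to\Omega^{*-1}(Z)\to 0$, bypassing the explicit $\delta=0$ computation. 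Your route via the connecting homomorphism is slightly more work but has the virtue of making explicit \emph{why} the splitting holds: the logarithmic singularity $\frac{dx}{x}$ becomes exact once pushed away from $Z$, which is exactly what your substitution $\frac{dx}{x}=d\psi$ on $\operatorname{supp}(d\chi)$ captures. Both arguments rely on the compactness of $Z$ (so that $\chi$ can be taken compactly supported and $\operatorname{supp}(d\chi)$ is a compact set bounded away from $Z$), consistent with the paper's standing hypothesis.
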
 
 
\begin{theorem}\label{scderham} Let $(M,Z)$ be a manifold $M$ with co-oreintable hypersurface $Z\subset M$. Then $^{sc}H^p(M)$, the Lie algebroid cohomology of the scattering tangent bundle $^{sc}TM$ over $(M,Z)$, is isomorphic to

\centerline{$^{b}H^p(M)\oplus \Omega^{p-1}(Z;(N^*Z)^{-p})\simeq H^p(M)\oplus H^{p-1}(Z)\oplus \Omega^{p-1}(Z;(N^*Z)^{-p}).$} \end{theorem}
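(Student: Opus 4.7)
The plan is to exhibit $^{sc}H^p(M)$ via a short exact sequence built from a boundary residue map, identify the kernel with ${}^bH^p(M)$, and invoke Mazzeo--Melrose.

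Near $Z$, fix a tubular neighborhood with defining function $x$. Any scattering $p$-form decomposes uniquely as $\omega = \frac{dx}{x^{p+1}}\wedge\alpha + \frac{1}{x^p}\beta$ with $\alpha,\beta$ horizontal and smooth, and a direct computation yields
\[
d\omega = \frac{dx\wedge(x\partial_x\beta - d_Z\alpha - p\beta)}{x^{p+1}} + \frac{d_Z\beta}{x^p}.
\]
Re-expressing in the scattering $(p+1)$-basis shows both coefficients of $d\omega$ carry an extra factor of $x$. Define
\[
\mathrm{Res}\colon {}^{sc}\Omega^p(M)\to\Omega^{p-1}(Z;|N^*Z|^{-p}),\quad \mathrm{Res}(\omega)=\alpha|_Z\otimes|dx|^{-p};
\]
the density factor absorbs the transformation $\widetilde\alpha|_Z = \alpha|_Z/f^p|_Z$ (under $x = f\widetilde x$, derived in the excerpt), which exactly matches how $|dx|^{-p}$ retrivializes. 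Since $\mathrm{Res}\circ d = 0$, the map descends to ${}^{sc}H^p(M)$. The algebroid inclusion ${}^{sc}TM\hookrightarrow {}^bTM$ induces a cochain inclusion $m\colon {}^b\Omega^\bullet(M)\to {}^{sc}\Omega^\bullet(M)$ whose local image consists of scattering forms with $\alpha,\beta\in x^p\Omega^*$; consequently $\mathrm{Res}_*\circ m_* = 0$.

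Surjectivity of $\mathrm{Res}_*$ is explicit: extending $\eta$ to an $x$-independent $\widetilde\eta$ via the tubular neighborhood,
\[
d\left(-\tfrac{\widetilde\eta}{p\,x^p}\right) = \frac{dx\wedge\widetilde\eta}{x^{p+1}} - \frac{d_Z\widetilde\eta}{p\,x^p}
\]
is locally a closed scattering $p$-form with residue $\eta$ (for $p\ge 1$), and cutting off by $\chi\equiv 1$ near $Z$ together with the smooth correction $\frac{d\chi\wedge\widetilde\eta}{p\,x^p}$ globalizes it to a closed representative. The main technical step is $\ker(\mathrm{Res}_*) = \mathrm{Im}(m_*)$. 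Given a closed $\omega$ with $\alpha|_Z = 0$, Taylor-expand $\alpha=\sum\alpha_k x^k$, $\beta=\sum\beta_k x^k$; the equation $d\omega=0$ yields $(k-p)\beta_k = d_Z\alpha_k$ and $d_Z\beta_k=0$ for all $k$, so in particular $\alpha_0 = 0$ forces $\beta_0 = 0$. For $k = 1,\dots,p-1$ the factor $(k-p)$ is invertible, and subtracting the exact form $d\left(-\alpha_k x^{k-p}/(p-k)\right)$ (cut off near $Z$) kills the $k$-th Taylor slice, using precisely the closedness relation $\beta_k = d_Z\alpha_k/(k-p)$ at that slice. After $p-1$ such corrections, $\omega$ is cohomologous to a scattering form with $\alpha,\beta\in x^p\Omega^*$ near $Z$, hence an element of $m({}^b\Omega^p(M))$. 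Injectivity of $m_*$ follows by the parallel argument applied to a scattering primitive of a closed b-form. The resulting short exact sequence $0\to{}^bH^p(M)\to {}^{sc}H^p(M)\to\Omega^{p-1}(Z;|N^*Z|^{-p})\to 0$ splits over $\mathbb R$, and substituting Mazzeo--Melrose produces the stated form.

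The hardest part will be the Taylor-coefficient induction: the local primitives $-\alpha_k x^{k-p}/(p-k)$ must be patched via cutoffs into genuine scattering $(p-1)$-forms on $M$, and the resulting smooth errors off $Z$ must be carefully tracked so that the final representative admits a genuine global b-form preimage.
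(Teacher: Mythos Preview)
Your approach is essentially correct and closely parallels the paper's, but the paper organizes the argument more efficiently. Rather than building a short exact sequence at the cohomology level via a residue map and then separately verifying surjectivity, exactness in the middle, and injectivity of $m_*$, the paper works with the quotient complex $\mathscr{C}^* = {}^{sc}\Omega^*/{}^b\Omega^*$ and observes that a choice of tubular neighborhood induces a splitting ${}^{sc}\Omega^* = {}^b\Omega^* \oplus \mathscr{C}^*$ \emph{at the level of complexes}: writing $\nu = \mathcal{R}_b(\nu) + \mathcal{S}_b(\nu)$ for the regular and singular Taylor parts, one checks directly that $d$ commutes with both projections. This immediately gives ${}^{sc}H^p = {}^bH^p \oplus H^p(\mathscr{C}^*)$, and your Taylor-coefficient induction is then exactly the computation of $H^p(\mathscr{C}^*) \cong \Omega^{p-1}(Z;|N^*Z|^{-p})$.

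The real weak point in your proposal is the injectivity of $m_*$, which you dismiss as ``the parallel argument applied to a scattering primitive.'' It is not quite parallel: given a closed b-form $\theta$ with scattering primitive $\sigma$, the primitive $\sigma$ is not closed, so your relations $(k-p)\beta_k = d_Z\alpha_k$ do not hold for its Taylor coefficients, and the inductive corrections you use (subtracting $d$ of a lower-degree scattering form) change $d\sigma$. What you actually need is to subtract \emph{closed} scattering $(p-1)$-forms from $\sigma$ to kill successive Taylor slices while preserving $d\sigma = \theta$; constructing these globally requires care. The paper's complex-level splitting sidesteps this entirely: if $\theta = d\sigma$, then $\theta = \mathcal{R}_b(d\sigma) = d(\mathcal{R}_b(\sigma))$, so $\mathcal{R}_b(\sigma)$ is a b-primitive. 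I would recommend you adopt this observation---it is implicit in your Taylor decomposition anyway, and it cleanly closes the gap.
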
 

\begin{proof}
The bundle map $i: {^{sc}TM}\to {^bTM}$ constructed in Theorem \ref{scaling} is given by evaluation and hence fits into a short exact sequence of complexes 

\centerline{$0\to {^b\Omega^{p}(M)}\xrightarrow{i^*}{^{sc}\Omega^{p}(M)}\xrightarrow{\pi} \mathscr{C}^p\to 0$}
\noindent where  

\centerline{$\mathscr{C}^p={}^{sc}\Omega^{p}(M)/{}^b\Omega^{p}(M).$} \noindent The differential on $^{\mathscr{C}}d$ is induced by the differential $^{sc}d$ on ${^{sc}\Omega^{p}(M)}$: if $\pi$ is the projection $^{sc}\Omega^{p}(M)\to{^{sc}\Omega^{p}(M)}/{^b\Omega^{p}(M)},$ then  $^{\mathscr{C}}d(\eta)=\pi({}^{sc}d(\theta))$ where  $\theta\in{}^{sc}\Omega^{p}(M)$ is any form such that $\pi(\theta)=\eta$. Hence $({}^\mathscr{C}d)^2=0$ and $(\mathscr{C}^*,{}^\mathscr{C}d)$ is in fact a complex. 

Given a tubular neighborhood $\tau=Z\times(-\varepsilon,\varepsilon)_x$ of $M$ near $Z$, note that $x$ defines a trivialization $t_x:N^*Z\to\mathbb{R}$ of $N^*Z$. We can write a degree $p$ scattering form $\nu\in {}^{sc}\Omega^p(M)$ as 

\centerline{$ \displaystyle{\nu= \theta + \sum_{i=0}^{p-1}\left( \frac{dx}{x^{p+1}}\wedge \alpha_ix^i + \dfrac{\beta_ix^i}{x^p}\right)}$} \noindent where $\theta\in{}^b\Omega^p(M)$, and $\alpha_i$, $\beta_i\in\Omega^{*}(Z)\simeq \Omega^{*}(Z;(N^*Z)^{-p})$ by $(t_x)_*$. 

We write $\mathcal{R}_b(\nu)=\theta$ and $\mathcal{S}_b(\nu)=\nu-\mathcal{R}_b(\nu)$ for `regular' and `singular' parts. It is easy to see that $\mathcal{R}_b({}^{sc}d\nu)={}^{sc}d(\mathcal{R}_b(\nu))$ and $\mathcal{S}_b({}^{sc}d\nu)={}^{sc}d(\mathcal{S}_b(\nu))$. Thus the trivialization $\tau$ induces a splitting ${}^{sc}\Omega^*(M)={}^b\Omega^*(M)\oplus\mathscr{C}^*$ as complexes. As a consequence we have ${}^{sc}H^p(M)={}^bH^p(M)\oplus  H^p(\mathscr{C}^{*})$ and are left to compute the cohomology of the quotient complex.  

Because 

\centerline{$\displaystyle{\pi({}^{sc}d\nu)= \frac{1}{x^{p+1}}\left(\sum_{i=0}^{p-1}\frac{dx}{x}\wedge\left(x^{i+1}(-d\alpha_i-(p-i)\beta_i)\right)+x^{i+1}d\beta_i\right)}$} \noindent then $\pi({}^{sc}d\nu)=0$ if and only if $\beta_i=\dfrac{-d\alpha_i}{(p-i)}$ for all $i=0,\dots,p-1$. Thus 

\centerline{$\displaystyle{\ker({}^\mathscr{C}d:\mathscr{C}^p\to\mathscr{C}^{p+1})=\left\{\sum_{i=0}^{p-1}\left( \frac{dx}{x^{p+1}}\wedge \alpha_ix^i - \dfrac{d\alpha_ix^i}{(p-i)x^p}\right)\bigg\rvert \alpha_i\in\Omega^{p-1}(Z)\right\}.}$}

Now we will consider the image of  ${}^{\mathscr{C}}d:\mathscr{C}^{p-1}\to\mathscr{C}^{p}$. There exists 

\noindent $\displaystyle{\widetilde{\nu}=\sum_{i=1}^{p-1}\frac{\alpha_ix^{i-1}}{-(p-i)x^{p-1}}\in\mathscr{C}^{p-1}}$ such that $\displaystyle{^{\mathscr{C}}d\widetilde{v}=\sum_{i=1}^{p-1}\left( \frac{dx}{x^{p+1}}\wedge \alpha_ix^i - \dfrac{d\alpha_ix^i}{(p-i)x^p}\right)}$, and hence 

\centerline{$\nu-d\widetilde{\nu}= \dfrac{dx}{x^{p+1}}\wedge \alpha_0 - \dfrac{d\alpha_0}{px^p}.$} \noindent Since 

\centerline{$\text{Im}({}^{\mathscr{C}}d)\subseteq\left\{{\alpha_0=0}\right\},$} \noindent this shows that each such form represents a distinct cohomology class. Thus we have identified $H^p(\mathscr{C}^*)$. 

Next, consider the effect of choice of local $Z$ defining function. We will show that we can identify 

\centerline{$H^p((\mathscr{C}^{*},{}^\mathscr{C}d))\simeq \Omega^{p-1}(Z;(N^*Z)^{-p}) \text{ by identifying } \dfrac{dx}{x^{p+1}}\wedge \alpha_0 - \dfrac{d\alpha_0}{px^p}\text{  with }\alpha_0.$} \noindent Indeed, each choice of $x$ defines a trivialization of $N^*Z$, $t_x:N^*Z\to\mathbb{R}$.  Then $(t_x)_*$ gives an isomorphism $\Omega^{p-1}(Z)\simeq \Omega^{p-1}(Z;(N^*Z)^{-p})$. To see that this is well-defined, note that changing $x$ to another local $Z$ defining function $\widetilde{x}$, means that $\widetilde{x}=\phi x$ for some non-vanishing function $\phi\in\mathcal{C}^\infty(M)$. Then $\dfrac{d\widetilde{x}}{\widetilde{x}}=\dfrac{dx}{x}+\dfrac{d\phi}{\phi}$. Since $\phi$ is a non-vanishing function,  

\centerline{$\left[\dfrac{d\widetilde{x}}{\widetilde{x}^{p+1}}\wedge\alpha_0\right]\text{ and }\left[\dfrac{dx}{x^{p+1}}\wedge\dfrac{\alpha_0}{\phi^p}\right]$} \noindent are representatives of the same cohomology class in $H^p(\mathscr{C})$ and 

\centerline{$(d\widetilde{x})^{-p}=\phi^{-p}(dx)^{-p}$} \noindent gives the change of trivialization of the density bundle $(N^*Z)^{-p}$. Hence, the cohomology group at $\mathscr{C}^p$ is $\Omega^{p-1}(Z;(N^*Z)^{-p})$, smooth $p-1$ forms on $Z$ valued in $(N^*Z)^{-p}$.  

We have shown that $^{sc}H^p(M)\simeq$ $^{b}H^p(M)\oplus \Omega^{p-1}(Z;(N^*Z)^{-p}).$ The final isomorphism is a consequence of $^{b}H^p(M)\simeq H^p(M)\oplus H^{p-1}(Z)$. \end{proof} 

{\bf Connection to contact geometry.} As we noted above, scattering-symplectic geometry includes the study of the standard Euclidean symplectic form at infinity. We also observed that the standard form on $\mathbb{R}^{2n}$ extends to a scattering-symplectic form that induces a contact structure on the boundary sphere at infinity. Contact structures arising in this way are imposed by all scattering-symplectic structures. The existence of a defining function $x$ in a tubular neighborhood of $Z$ means that $Z$ is co-orientable inside $M$. The scattering-symplectic structure imposes the further restriction that $Z$ is also  co-orientable as a contact manifold. 

\begin{proposition}\label{contactprop} If $M$ be a manifold with cooriented hypersurface $Z\subset M$ and scattering-symplectic form $\omega$, then $\omega$ induces a co-oriented contact structure $\xi$ on $Z$. 
\end{proposition} 

\begin{proof} Let $x$ be a local $Z$ defining function. A scattering 2-form $\omega$ can be expressed near $Z$ as 

\centerline{$\omega = \dfrac{dx}{x^3}\wedge\alpha+\dfrac{\beta}{x^2}$} \noindent for some smooth forms $\alpha\in\Omega^1(M)$ and $\beta\in\Omega^2(M)$. Because $\omega$ is closed,  $\beta = \dfrac{-d\alpha}{2}$ at $Z$. Further, because $\omega$ is a non-degenerate scattering 2-form, we know that 

\centerline{$\displaystyle{\frac{dx}{x^{2n+1}}\wedge\alpha\wedge \left(\dfrac{-d\alpha}{2}\right)^{n-1}\neq 0}$} \noindent at $Z$. Thus $\alpha\wedge (d\alpha)^{n-1}\neq 0$ as a smooth form on $Z$. If we express $\omega$ using a different local $Z$ defining function $\widetilde{x}$ such that $\phi\widetilde{x}=x$ for some non-vanishing function $\phi\in\mathcal{C}^\infty(M)$, then 

\centerline{$\omega|_Z= \dfrac{dx}{x^3}\wedge\alpha-\dfrac{d\alpha}{2x^2}=\dfrac{1}{(\phi\widetilde{x})^2}\left(\dfrac{d\phi}{\phi}+\dfrac{d\widetilde{x}}{\widetilde{x}}\right)\wedge\alpha -\dfrac{d\alpha}{2(\phi\widetilde{x})^2}= \dfrac{d\widetilde{x}}{\widetilde{x}^3}\wedge\left(\dfrac{\alpha}{\phi^2}\right)-\dfrac{d\left(\alpha/\phi^2\right)}{2\widetilde{x}^2}.$}  \noindent The contact form $\widetilde{\alpha}$ induced by $\omega$ expressed in $\widetilde{x}$ satisfies $\widetilde{\alpha}=\dfrac{\alpha}{\phi^2}$, and thus is conformally equivalent to $\alpha$. Thus the scattering symplectic form $\omega$ induces a conformal class of contact forms defining the contact structure $\ker\alpha=\xi$ on $Z$. 
\end{proof} 

We will explore the relationship between a contact hypersurface and a scattering-symplectic manifold in Section \ref{Contact}. The existence of an induced contact structure evidences the fact that scattering-symplectic structures are sufficiently rigid to all locally look the same. 

\begin{proposition}\label{scdarboux} (a sc-Darboux theorem) Let $\omega$ by a sc-symplectic form on $(M,Z)$ and let $p\in Z$. There exists a coordinate chart $(U,x_1,y_1,\dots,x_n,y_n)$ centered at $p$ such that on $U$, the hypersurface $Z$ is locally defined by $\left\{{x_1=0}\right\}$, and 

\centerline{$\displaystyle{\omega = \frac{dx_1}{x_1^3}\wedge \Big(dy_1+\sum_{i=2}^{n} y_idx_i-x_idy_i\Big)+\sum_{i=2}^n\frac{dx_i\wedge dy_i}{x_1^2}.}$}
\end{proposition}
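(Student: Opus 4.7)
The plan is to reduce the statement to Lemma \ref{AMoser} by first choosing coordinates in which $\omega$ matches the asserted normal form at $Z$.

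By Proposition \ref{contactprop}, $\omega$ induces a co-oriented contact structure $\xi$ on $Z$. I apply the classical contact Darboux theorem at $p \in Z$ to pick coordinates $(y_1, x_2, y_2, \ldots, x_n, y_n)$ on $Z$ near $p$ in which a chosen representative of the conformal class of contact forms of $\xi$ equals $\alpha_{\mathrm{std}} := dy_1 + \sum_{i=2}^n(y_i\,dx_i - x_i\,dy_i)$; observe that $-d\alpha_{\mathrm{std}}/2 = \sum_{i=2}^n dx_i \wedge dy_i$ matches the quadratic piece of the target. Using the conformal freedom exhibited in the proof of Proposition \ref{contactprop} (a positive rescaling of $\alpha$ is absorbed by rescaling the $Z$-defining function), I pick a tubular-neighborhood defining function $x_1$ so that the induced representative on $Z$ is \emph{exactly} $\alpha_{\mathrm{std}}$, and extend to a chart $(x_1, y_1, x_2, \ldots, y_n)$ on $M$ near $p$ with $Z = \{x_1 = 0\}$. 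In these coordinates
\[
\omega = \frac{dx_1}{x_1^3}\wedge \alpha(x_1) + \frac{\beta(x_1)}{x_1^2}, \qquad \alpha(0) = \alpha_{\mathrm{std}},\ \beta(0) = -\tfrac{1}{2}d\alpha_{\mathrm{std}},
\]
the latter being forced by $d\omega = 0$.

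Let $\omega_0 := \frac{dx_1}{x_1^3}\wedge \alpha_{\mathrm{std}} - \frac{d\alpha_{\mathrm{std}}}{2x_1^2}$ be the asserted normal form (sc-symplectic by direct computation). To invoke Lemma \ref{AMoser}, the difference $(\rho_{sc}^{-1})^*(\omega - \omega_0)$ viewed as an ordinary form on $M \setminus Z$ must extend smoothly to zero on $Z$; matching only the leading Taylor coefficients is insufficient for this. I upgrade the match to infinite order via a Borel-type flattening: the closedness relations, which take the Taylor-level form $d_Z \alpha(x_1) + 2\beta(x_1) - x_1 \partial_{x_1} \beta(x_1) = 0$ and $d_Z \beta(x_1) = 0$, determine each coefficient $\beta^{(k)}$ from $\alpha^{(k)}$ for $k \neq 2$ and constrain $\beta^{(2)}$ to be $d_Z$-closed. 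Inductively I construct vector fields tangent to $Z$ whose time-one flows kill the obstructions $\alpha^{(k)} - \alpha_{\mathrm{std}}^{(k)}$ at each order (and kill the residual $\beta^{(2)}$); each step reduces to a Poincar\'e-lemma problem on the contractible chart. Borel's lemma assembles these flows into a single diffeomorphism $\phi$, defined near $p$ and fixing $Z$, with $\phi^*\omega - \omega_0$ vanishing to infinite order at $Z$, hence smooth as an ordinary form on $M \setminus Z$ extending to zero on $Z$.

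With this flattening in hand, Lemma \ref{AMoser} applies to the pair $(\phi^*\omega, \omega_0)$: both are closed sc-symplectic forms, their difference extends smoothly to zero as required, and non-degeneracy on $Z$ follows from the matching of leading terms. This yields an $\mathcal{A}$-symplectomorphism $\psi$ fixing $Z$ with $\psi^*(\phi^*\omega) = \omega_0$ near $p$, and the composition $\phi \circ \psi$ supplies the coordinate chart realizing the normal form. The main obstacle is the Borel flattening step: at each Taylor order one must verify that the linearized equation for the correcting tangential vector field is solvable, which hinges on the Darboux model of $\alpha_{\mathrm{std}}$ on $Z$ together with Poincar\'e exactness on the contractible chart to produce the required primitives.
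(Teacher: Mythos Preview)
Your proof takes a genuinely different and more elaborate route than the paper's. After setting up contact-Darboux coordinates on $Z$ and the model form $\omega_0$, the paper simply asserts that $(\rho_{sc}^{-1})^*(\omega-\omega_0)$ is a ``standard'' smooth differential form on the contractible chart $U_p\times\{|x|<\varepsilon\}$, applies the ordinary Poincar\'e lemma to write it as $d\nu$, uses a homotopy operator to arrange $\nu|_Z=0$, and then invokes Lemma~\ref{AMoser}. There is no Borel flattening in the paper.

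Your instinct that matching only the leading Taylor coefficients is insufficient for hypothesis~(2) of Lemma~\ref{AMoser} is correct: agreement of $\omega$ and $\omega_0$ as sections of $\wedge^2\,{}^{sc}T^*M|_Z$ leaves a difference of the shape $\tfrac{dx_1}{x_1^2}\wedge\tilde\alpha+\tfrac{\tilde\beta}{x_1}$ after pullback to $TM$, which is still singular across $Z$. So the paper's assertion that the difference is already a smooth ordinary form is at best under-explained, and you are right to flag it. Your Borel-type flattening is a valid way to close this, though you leave the key inductive step---solvability of the linearized correction equation at each Taylor order---only sketched, as you yourself acknowledge.

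A leaner fix, closer in spirit to what the paper seems to intend, is to run Moser directly at the scattering level rather than through Lemma~\ref{AMoser} as stated: on the contractible chart, the computation in Theorem~\ref{scderham} shows that a closed scattering $2$-form whose restriction to $Z$ vanishes is ${}^{sc}$-exact, so $\omega-\omega_0={}^{sc}d\sigma$ for a scattering $1$-form $\sigma$; the Moser field $V_t=-(\omega_t^\flat)^{-1}(\sigma)$ is then a scattering vector field, hence vanishes at $Z$ and integrates to an isotopy fixing $Z$ pointwise. This sidesteps both the smoothness issue and your infinite-order machinery.
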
 

\begin{proof} Let $\omega$ be a scattering symplectic form on $(M,Z)$. By Proposition \ref{contactprop}, given a local $Z$ defining function $x$, 

\centerline{$\omega|_Z=\dfrac{dx}{x^3}\wedge\alpha-\dfrac{d\alpha}{2x^2}$} \noindent where $\alpha$ is a contact form on $Z$.  Let $p\in Z$. In a neighborhood $U_p\subset Z$, there exist contact-Darboux coordinates $y_1,x_2,y_2,\dots,x_n,y_n$ such that 

\centerline{$\displaystyle{\alpha= dy_1+\sum_{i=1}^n(y_idx_i- x_idy_i)}.$} \noindent Then 

\centerline{$\displaystyle{\omega|_Z = \frac{dx}{x^3}\wedge \Big(dy_1+\sum_{i=2}^{n} y_idx_i-x_idy_i\Big)+\sum_{i=2}^n\frac{dx_i\wedge dy_i}{x^2}.}$} \noindent In the set $U_p\times \left\{{|x|<\varepsilon}\right\}$, choose 

\centerline{$\displaystyle{\omega_0= \frac{dx}{x^3}\wedge \Big(dy_1+\sum_{i=2}^{n} y_idx_i-x_idy_i\Big)+\sum_{i=2}^n\frac{dx_i\wedge dy_i}{x^2}.}$}

\noindent  By Lemma \ref{AMoser}, we have the existence of a Lie algebroid isomorphism such that $\phi^\ast\omega = \omega_0$.  However, we will continue our analysis to prove the stronger statement that we can guarantee $\phi$ also covers the identity on $Z$. 

\noindent  Consider that $\omega - \omega_0 = \dfrac{dx}{x}\wedge \beta_1 + \beta_2$ for some closed $1$-form $\beta\in H^1(U_p)$ and a closed $2$-form $\beta_2\in H^2(U_p)$.  Since $\beta_1$ and $\beta_2$ are smooth differential forms on  contractible set $U_p$, we can appeal to the Poincar\'{e} lemma and there exists $f\in\Omega^0(M)$ and $\theta\in \Omega^1(M)$ so that $\sigma := f\dfrac{dx}{x} + \theta$ satisfies $d\sigma = \omega - \omega_0$.  As described in the proof of Lemma \ref{AMoser}, we want $v_t$ such that $i_{v_t}\omega_t=- f\dfrac{dx}{x} - \theta$. Consider that $$\omega_t = \dfrac{dx}{x^3}\wedge \alpha_t - \dfrac{d\alpha_t}{2x^2}$$ for a smooth family of differential forms $\alpha_t$. Because $\omega_t$ is so singular in every single one of its terms, in order for $v_t$ to contract in and create the less-singular $b$-form, we must have that $v_t$ has at least one copy of $x$ in front of every term. Thus $v_t|_Z=0$. Consequently, when we integrate $v_t$ into the automorphism $\phi_t$, this bundle map will cover the identity on $Z$. \end{proof} 

We can further employ Lemma \ref{AMoser} to establish a tubular neighborhood theorem for $\omega$ near $Z$.  

Recall from Theorem \ref{scderham} that 

\centerline{${}^{sc}H^2(M)\simeq H^2(M)\oplus H^1(Z)\oplus \Omega^{1}(Z;(N^*Z)^{-2}).$} \noindent Given a cohomology class $[\sigma]\in {}^{sc}H^2(M)$, we can associate to it a \emph{decomposition} $(a,b_1,b_2)$ where $a\in \Omega^{1}(Z;(N^*Z)^{-2})$, $b_1\in H^1(Z)$, and $b_2\in H^2(M)$. We will consider scattering symplectic forms $\omega$ and their cohomology decompositions $(a,b_1,b_2)$. A given local $Z$ defining function $x$ gives us a trivialization $t_x:N^*Z\to \underline{\mathbb{R}}$ and defines a smooth contact form $\alpha=(t_x)_*(a)\in\Omega^1(Z)$. We will show that for any $\beta_i\in b_i$, there is a tubular neighborhood of $Z$ such that  \begin{equation}\label{eq:normalform}\omega=\dfrac{dx}{x^3}\wedge (\alpha+x^2\beta_1)-\dfrac{d\alpha}{2x^2}+\beta_2.\end{equation} 

\begin{proposition}\label{sctube1} Let $(M,Z,\omega)$ be a scattering symplectic manifold. Given a local $Z$ defining function $x$, there exists a tubular neighborhood $U\supset Z$, a contact form $\alpha$, and closed forms $\beta_1\in\Omega^1(Z)$, $\beta_2\in\Omega^2(Z)$ such that on $U$ there exists a scattering-symplectomorphism pulling $\omega$ back to (\ref{eq:normalform}). Further, if $[\omega]\in {}^{sc}H^2(M)$ has decomposition $(a,b_1,b_2)$, then $\beta_1$ can be any $\beta_1\in b_1$ and $\beta_2$ can be any $\beta_2\in b_2$. 
\end{proposition}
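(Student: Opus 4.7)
The plan is to realize $[\omega] \in {}^{sc}H^2(\tau)$ on a tubular neighborhood $\tau \simeq Z \times (-\varepsilon, \varepsilon)_x$ by the explicit representative $\omega_0$ of \eqref{eq:normalform}, and then produce the scattering-symplectomorphism $\phi^*\omega = \omega_0$ in two Moser-type steps. Following Remark \ref{tseriesrmk}, I would first Taylor-expand $\omega$ as
\[\omega = \frac{dx}{x^3} \wedge \sum_{i\geq 0} x^i \alpha_i + \frac{1}{x^2} \sum_{i\geq 0} x^i \beta_i,\]
with $\alpha_i \in \Omega^1(Z)$, $\beta_i \in \Omega^2(Z)$. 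Closedness of $\omega$ yields the recursions $d_Z\beta_i = 0$ and $d_Z\alpha_i + (2-i)\beta_i = 0$; in particular $\alpha_0 = \alpha$ is the contact form of Proposition \ref{contactprop}, $\beta_0 = -d\alpha/2$, $\beta_1 = -d\alpha_1$, and the closed forms $\alpha_2, \beta_2$ have classes $[\alpha_2] \in H^1(Z)$, $[\beta_2] \in H^2(Z)$ realizing the two non-leading summands of $[\omega]$ under Theorem \ref{scderham}.

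Choosing closed representatives $\widetilde{\beta}_1 \in [\alpha_2]$ and $\widetilde{\beta}_2 \in [\beta_2]$, I would define $\omega_0$ to be the right-hand side of \eqref{eq:normalform}; it is closed by construction and, since its leading singular part $\frac{dx}{x^3}\wedge\alpha - \frac{d\alpha}{2x^2}$ coincides with that of $\omega$, scattering-symplectic on a (possibly smaller) tubular neighborhood $U$. Thus $\omega$ and $\omega_0$ are cohomologous scattering-symplectic forms on $U$.

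The central step is to produce a scattering 1-form $\sigma$ for which $\omega - d\sigma - \omega_0$ is a smooth 2-form vanishing on $Z$, so that Lemma \ref{AMoser} applies. I would build $\sigma$ order-by-order in the Taylor expansion: $-\alpha_1/x$ (a legitimate scattering 1-form since $\alpha_1/x$ is smooth in the basis $\{dy_i/x\}$ of ${}^{sc}T^*M$) cancels the $x^{-2}$ mismatch $\frac{dx\wedge\alpha_1}{x^2}$ together with the $-d\alpha_1/x$ contribution from $\beta_1/x$ via $\beta_1 = -d\alpha_1$; writing $\alpha_2 - \widetilde{\beta}_1 = d_Z h$ and $\beta_2 - \widetilde{\beta}_2 = d_Z g$, the additional terms $-h\,dx/x$ and $g$ clear the $x^{-1}$ singularity and the $O(1)$ discrepancy respectively. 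Inductive continuation of this scheme and Borel summation assemble the corrections into a single smooth scattering 1-form $\sigma$. Two Moser steps then follow: the scattering Moser integration of the family $\omega_s = \omega - s\,d\sigma$, whose Moser vector field $(\omega_s^\flat)^{-1}(\sigma)$ is a scattering field (hence smooth on $M$ and vanishing pointwise on $Z$), produces a diffeomorphism $\phi_1$ fixing $Z$ with $\phi_1^*\omega = \omega - d\sigma$; and Lemma \ref{AMoser} applied to the smooth-vanishing difference $\omega - d\sigma - \omega_0$ yields $\phi_2$ fixing $Z$ with $\phi_2^*(\omega - d\sigma) = \omega_0$. The composition $\phi = \phi_1 \circ \phi_2$ satisfies $\phi^*\omega = \omega_0$ on $U$.

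The main obstacle is the order-by-order construction and Borel summation of $\sigma$. Solvability at each Taylor order relies on the closedness recursions together with the cohomological matching of $\alpha_2, \beta_2$ to the chosen representatives $\widetilde{\beta}_1, \widetilde{\beta}_2$; assembling the formal sum of local corrections into an honest smooth scattering 1-form on a common tubular neighborhood is a standard Borel extension argument, executed after shrinking $\tau$ enough to keep both Moser interpolations scattering-symplectic throughout.
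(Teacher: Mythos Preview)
Your approach is correct and follows the same overall strategy as the paper: build the model form $\omega_0$ with the prescribed cohomology decomposition $(a,b_1,b_2)$, exhibit a scattering primitive for $\omega-\omega_0$, and run a Moser argument whose vector field is a section of ${}^{sc}TM$ and hence integrates to a diffeomorphism fixing $Z$.

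The main difference is one of economy. The paper does not construct the primitive term by term: once $\omega_0$ is chosen so that $[\omega]=[\omega_0]$ in ${}^{sc}H^2(U)$, Theorem~\ref{scderham} already gives a scattering $1$-form $\nu$ with $\omega-\omega_0=d\nu$, and the Moser argument goes through directly with this $\nu$. Your Taylor-expansion recursion is effectively re-proving that exactness by hand. In particular, the Borel summation is unnecessary: after your explicit corrections $-\alpha_1/x,\ -h\,dx/x,\ g$ (and at most one further smooth term $xA_3$ to kill the $dx\wedge A_3$ residue at $Z$), the difference $\omega-d\sigma-\omega_0$ is already smooth and $O(x)$, so Lemma~\ref{AMoser} applies. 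Equivalently, once the singular part is cleared the remainder is a smooth closed $2$-form with $i_Z^*=0$, and the standard relative Poincar\'e homotopy operator (exactly what the paper invokes) furnishes a smooth primitive vanishing on $Z$; no infinite-order matching is needed.

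Your two Moser steps are both valid, but they can be merged: writing $\omega-\omega_0=d(\sigma+\tau)$ with $\sigma$ scattering and $\tau$ smooth (from the homotopy operator), the single Moser field $(\omega_t^\flat)^{-1}(\sigma+\tau)$ is a section of ${}^{sc}TM$ and its flow gives $\phi$ in one go. This is the content of the paper's one-line appeal to Lemma~\ref{AMoser}. So your proof is sound; it simply does more work than the paper by reconstructing explicitly what the scattering cohomology computation already packages.
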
 

\begin{proof} Let $\omega$ be a scattering symplectic form on a manifold $(M,Z)$ with cohomology class decomposition $(a,b_1,b_2)$. Let $x$ be a local $Z$ defining function and $t_x:N^*Z\to\mathbb{R}$ the associated trivialization. Let $\alpha$ denote $(t_x)_*(a)$ and let $U$ be a tubular neighborhood of $Z$. Choose a closed form $\beta_2$ that is cohomologous to $b_2|_U$ and choose a representative $\beta_1\in b_1$. Let 

\centerline{$\omega_0=\dfrac{dx}{x^3}\wedge (\alpha+x^2\beta_1)-\dfrac{d\alpha}{2x^2}+\beta_2.$} \noindent   

We have satisfied the conditions of Lemma \ref{AMoser}, and thus we have $\phi$ such that $\phi^\ast \omega = \omega_0$. However, as explained in the proof of Proposition \ref{scdarboux}, the scattering tangent bundle has the special quality that the vector field $v_t$, which we integrate to construct $\phi$, additionally satisfies $v_t|_Z=0$. Thus $\phi$ covers the identity on $Z$. \end{proof} 

In the previous two results, we wrote a scattering-symplectic form in a standard way by assuming that $\omega$ fixed a contact structure on $Z$. In general a scattering-symplectomorphism will induce a contactomorphism rather than merely fix the contact structure.    
\begin{proposition} If there exists a scattering-symplectomorphism

 \centerline{$\Phi:(M_1,Z_1,\omega_1)\to(M_2,Z_2,\omega_2),$}\noindent  then $\Phi|_{Z_1}:Z_1\to Z_2$ is a contactomorphism between the contact structures induced by $\omega_1$ and $\omega_2$ respectively. 
\end{proposition}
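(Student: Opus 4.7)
The plan is to unpack what a scattering-symplectomorphism does to the local model near $Z$, extract the contact form from the coefficient of $dx/x^3$, and compare. The first observation is that because $\Phi$ is an $\mathcal{A}$-map with $\Phi^{-1}(Z_2) = Z_1$, for any choice of defining functions $x_1$ for $Z_1$ and $x_2$ for $Z_2$, we have
\[
\Phi^{*}x_2 = f\cdot x_1
\]
for some smooth function $f$ on a neighborhood of $Z_1$ that is nowhere zero (positive, after fixing orientations).

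Next, using Proposition \ref{contactprop}, I would write $\omega_2$ near $Z_2$ as
\[
\omega_2 = \frac{dx_2}{x_2^{3}}\wedge \alpha_2 - \frac{d\alpha_2}{2x_2^{2}},
\]
where $\alpha_2$ is a smooth one-form on $M_2$ whose restriction to $Z_2$ is a contact form representing $\xi_2$. Pulling back and using $d(fx_1) = x_1\,df + f\,dx_1$, a direct computation gives
\[
\Phi^{*}\omega_2 = \frac{dx_1}{x_1^{3}}\wedge \frac{\Phi^{*}\alpha_2}{f^{2}} + \frac{1}{x_1^{2}}\Bigl(\frac{df}{f^{3}}\wedge \Phi^{*}\alpha_2 - \frac{d\Phi^{*}\alpha_2}{2f^{2}}\Bigr).
\]
On the other hand, $\omega_1 = \frac{dx_1}{x_1^{3}}\wedge \alpha_1 - \frac{d\alpha_1}{2x_1^{2}}$ near $Z_1$. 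Setting $\Phi^{*}\omega_2 = \omega_1$ and comparing the coefficient of $dx_1/x_1^{3}$ at $Z_1$ (which, as noted in the proof of Proposition \ref{contactprop}, is well-defined modulo a conformal factor coming from the change of $Z$-defining function) yields
\[
\alpha_1\big|_{Z_1} = \frac{1}{f^{2}\big|_{Z_1}}\,(\Phi^{*}\alpha_2)\big|_{Z_1}.
\]
Since pullback by $\Phi$ commutes with restriction to the hypersurface, $(\Phi^{*}\alpha_2)|_{Z_1} = (\Phi|_{Z_1})^{*}(\alpha_2|_{Z_2})$, and $f|_{Z_1}$ is a nowhere vanishing positive function on $Z_1$. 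Therefore $(\Phi|_{Z_1})^{*}\alpha_2$ lies in the same conformal class as $\alpha_1$, so $\Phi|_{Z_1}$ sends $\xi_1=\ker\alpha_1$ to $\xi_2=\ker\alpha_2$, i.e.\ it is a contactomorphism.

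The main subtlety, and the step to be carried out with care, is the claim that the $dx_1/x_1^{3}$-coefficient at $Z_1$ is a well-defined object independent of the choices made in representing $\omega_1$ in normal form. This is precisely the point addressed in the proof of Proposition \ref{contactprop}: different defining functions change $\alpha$ only by multiplication by a positive function, so the conformal class, and with it the contact structure, is intrinsic. Once this invariance is in hand, the computation above is routine and the conclusion follows.
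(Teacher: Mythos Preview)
Your proposal is correct and follows essentially the same approach as the paper: write $\Phi^{*}x_2=fx_1$, expand $\Phi^{*}\omega_2$ in the local model, and match the $dx_1/x_1^{3}$-coefficient at $Z_1$ to obtain $\Phi^{*}\alpha_2=f^{2}\alpha_1$ there. One small imprecision: the identity $\omega_2=\frac{dx_2}{x_2^{3}}\wedge\alpha_2-\frac{d\alpha_2}{2x_2^{2}}$ holds only \emph{at} $Z_2$ (Proposition~\ref{contactprop}), not in a full neighborhood, but since your comparison is performed at $Z_1$ this does not affect the argument.
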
 

\begin{proof} Given a local $Z_2$ defining function $x_2$, we can write  

\centerline{$\displaystyle{\omega_2|_{Z_2}=\dfrac{dx_2}{x_2^3}\wedge\alpha_2-\dfrac{1}{2}\dfrac{d\alpha_2}{x_2^2}}$} \noindent for some contact form $\alpha_2$ on $Z_2$. Then 

\centerline{$\displaystyle{\Phi^*\omega_2|_{Z_2}=\frac{d(\Phi^*x_2)\wedge\Phi^*\alpha_2}{(\Phi^*x_2)^3}-\frac{1}{2}\frac{\Phi^*d\alpha_2}{(\Phi^*x_2)^2}=\frac{dx_1}{x_1^3}\wedge\alpha_1-\frac{1}{2}\frac{d\alpha_1}{x_1^2}=\omega_1|_{Z_1}}$} \noindent for some local $Z_1$ defining function $x_1$ and contact form $\alpha_1$. We will compare the terms in this equality. Note since $\Phi$ preserves the singular locus of $\omega_1$ and $\omega_2$, then $\Phi^*x_2=fx_1$ for non-vanishing $f\in\mathcal{C}^{\infty}(M_1)$. So 

\centerline{$\displaystyle{d(\Phi^*x_2)=d(fx_1)=\dfrac{x_1df+fdx_1}{x_1^3f^3}~\text{ and }~\dfrac{x_1df+fdx_1}{x_1^3f^3}\Big\rvert_Z=\dfrac{dx_1}{x_1^3f^2}.}$} \noindent Then $\dfrac{dx_1}{x_1^3f^2}\wedge\Phi^*\alpha_2=\dfrac{dx_1}{x_1^3}\wedge\alpha_1$ and thus $\Phi^*\alpha_2=f^2\alpha_1$. 
\end{proof}  

In fact, we can use certain contactomorphisms to construct local scattering-symplectomorphisms. 

\begin{proposition}\label{sctube2} Let $\omega$ and $\widetilde{\omega}$ be scattering-symplectic forms on $(M,Z)$ with cohomology decompositions $(a,b_1,b_2)$ for $[\omega]$ and $(\widetilde{a},\widetilde{b}_1,\widetilde{b}_2)$ for $[\widetilde{\omega}]$. Let $x$ be a local $Z$ defining function and consider the induced trivialization $t_x:N^*Z\to\mathbb{R}$. If there is a contactomorphism $\Phi:Z\to Z$ such that \begin{enumerate}\item $\Phi^*(t_x)_*a=f\cdot(t_x)_*\widetilde{a}$ for positive $f\in\ C^\infty(Z)$,    
\item $\Phi^*b_1=\widetilde{b}_1$, and
\item $\Phi^*b_2|_Z=\widetilde{b}_2|_Z$, \end{enumerate} then there exists a neighborhood $U\supset Z$ and a scattering-symplectomorphism $\phi:U\to U$ such that $\phi^*\omega_1=\omega_2$.\end{proposition}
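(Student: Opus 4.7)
The plan is to reduce to the $\mathcal{A}$-Moser lemma by first putting both forms in the normal form of Proposition \ref{sctube1} and then explicitly extending $\Phi$ across a tubular neighborhood. Using Proposition \ref{sctube1}, I would choose representatives $\beta_1\in b_1$, $\beta_2\in b_2$, $\widetilde\beta_1\in\widetilde b_1$, $\widetilde\beta_2\in\widetilde b_2$ and a common product tubular neighborhood $U=Z\times(-\varepsilon,\varepsilon)_x$ on which, after scattering-symplectomorphisms,
$$\omega = \frac{dx}{x^3}\wedge(\alpha+x^2\beta_1) - \frac{d\alpha}{2x^2}+\beta_2, \qquad \widetilde\omega = \frac{dx}{x^3}\wedge(\widetilde\alpha+x^2\widetilde\beta_1) - \frac{d\widetilde\alpha}{2x^2}+\widetilde\beta_2,$$
with $\alpha=(t_x)_*a$ and $\widetilde\alpha=(t_x)_*\widetilde a$.

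Next, I would extend $\Phi$ to a diffeomorphism $\hat\Phi\colon U\to U$ (shrinking $\varepsilon$ if necessary) by $\hat\Phi(z,x)=(\Phi(z),\sqrt{f(z)}\,x)$. The factor $\sqrt f$ is calibrated by the transformation rule for $Z$ defining functions recalled before Theorem \ref{scderham}: a direct computation at $Z$ shows that the contributions from the $\tfrac{dx}{x^3}\wedge\alpha$ and $\tfrac{d\alpha}{2x^2}$ terms of $\hat\Phi^*\omega$ conspire so that $\hat\Phi^*\bigl(\tfrac{dx}{x^3}\wedge\alpha - \tfrac{d\alpha}{2x^2}\bigr) = \tfrac{dx}{x^3}\wedge\widetilde\alpha - \tfrac{d\widetilde\alpha}{2x^2}$ exactly, the extra $\tfrac{df\wedge\widetilde\alpha}{2f x^2}$ pieces cancelling, using $\Phi^*\alpha=f\widetilde\alpha$ from hypothesis (1). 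Consequently the density-valued piece of $[\hat\Phi^*\omega]\in{}^{sc}H^2(U)$ agrees with $\widetilde a$; the $H^1(Z)$ piece agrees by hypothesis (2); and the $H^2(U)$ piece agrees by hypothesis (3), since $U$ deformation retracts onto $Z$.

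Once the cohomology classes match, $\hat\Phi^*\omega - \widetilde\omega$ is scattering-exact, and by a homotopy operator argument (as in the proof of Proposition \ref{sctube1}) one can arrange $(\rho_{sc}^{-1})^*(\hat\Phi^*\omega - \widetilde\omega)$ to be a smooth form on $U$ extending smoothly to zero on $Z$. Lemma \ref{AMoser} then produces a scattering-symplectomorphism $\psi\colon U\to U$ fixing $Z$ with $\psi^*\hat\Phi^*\omega=\widetilde\omega$, and $\phi:=\hat\Phi\circ\psi$ is the desired map. The main technical obstacle will be the second step: verifying the cancellation in the computation of $\hat\Phi^*\omega$, tracking how the $\sqrt f$ rescaling affects the density trivialization $t_x$, and confirming that hypotheses (1)--(3) are exactly what is required to match each of the three pieces of the scattering cohomology decomposition.
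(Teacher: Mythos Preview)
Your proposal is correct and follows essentially the same route as the paper: put both forms in the normal form of Proposition~\ref{sctube1}, extend $\Phi$ across the tubular neighborhood via $(z,x)\mapsto(\Phi(z),\sqrt{f(z)}\,x)$, verify the cancellation so that the density piece becomes exactly $\widetilde\alpha$, check that the $H^1(Z)$ and $H^2$ pieces match by hypotheses (2) and (3), and then finish with a Moser argument. The only cosmetic difference is that the paper writes $f=e^g$ and invokes Proposition~\ref{sctube1} a second time at the end rather than citing Lemma~\ref{AMoser} directly, but the content is the same.
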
 
\begin{proof} Fix a local $Z$ defining function $x$ and consider the induced trivialization $t_x:N^*Z\to\mathbb{R}$. Let $\alpha=\Phi^*(t_x)_*a$ and $\widetilde{\alpha}=(t_x)_*\widetilde{a}$. Let $\beta_1\in b_1$ and  $\beta_2\in b_2$. 
By Proposition \ref{sctube1}, there exists a tubular neighborhood $Z\times \left\{{x\in (-\delta,\delta)}\right\}$ on which

\centerline{$\displaystyle{\omega=\dfrac{dx}{x^3}\wedge (\alpha+x^2\beta_1)-\dfrac{d\alpha}{2x^2}+\beta_2.}$} 
\noindent Let $\Phi:Z\to Z$ be a contactomorphism such that $\Phi^*(\alpha)= e^g\widetilde{\alpha}$, for a smooth function $g\in\mathcal{C}^\infty(Z)$. Define a function 

\centerline{$f:Z\times (-\delta,\delta)\to\mathbb{R}~~\text{ by } ~~f(z,x)=\sqrt{e^{g(z)}}x.$} \noindent Then consider 

\centerline{$\Psi:Z\times(-\delta,\delta)\to Z\times(-\delta,\delta)~~ \text{ given by }~~\Psi(z,x)=(\Phi(z),f(z,x)).$} \noindent
Since 

\centerline{$\dfrac{d(e^{g/2}x)}{e^{3g/2}x^3}=\dfrac{dg}{2e^gx^2}+\dfrac{dx}{e^gx^3}$,} \noindent we have that 

\centerline{$\Psi^*(\omega)=\dfrac{dx}{x^3}\wedge\widetilde{\alpha}+\dfrac{dx}{x}\wedge\Phi^*\beta_1 -\dfrac{d\Phi^*\alpha}{2e^gx^2}+\Phi^*\beta_2 + \dfrac{dg}{2x^2}\wedge\widetilde{\alpha}+\dfrac{dg}{2}\wedge\Phi^*\beta_1.$} \noindent 
Because 

\centerline{$-\dfrac{d\Phi^*\alpha}{2e^gx^2}=-\dfrac{e^gdg}{2e^gx^2}\wedge\widetilde{\alpha}- \dfrac{e^gd\widetilde{\alpha}}{2e^gx^2},$} \noindent we have that 

\centerline{$\Psi^*(\omega)=\dfrac{dx}{x^3}\wedge\widetilde{\alpha}-\dfrac{d\widetilde{\alpha}}{2x^2}+\dfrac{dx}{x}\wedge\Phi^*\beta_1 +\Phi^*\beta_2 +\dfrac{dg}{2}\wedge\Phi^*\beta_1.$}
\noindent Note that $\Phi^*\beta_1$ is closed, so 

\centerline{$d\left(\dfrac{g}{2}\Phi^*\beta_1\right)=\dfrac{dg}{2}\wedge\Phi^*\beta_1$} \noindent is an exact form. Since $\Phi^*\widetilde{b_1}=b_1$ and $\Phi^*\widetilde{b}_2|_Z=b_2|_Z$, we have $\Phi^*\beta_1\in \widetilde{b_1}$ and  

\centerline{$\Phi^*\beta_2 +\dfrac{dg}{2}\wedge\Phi^*\beta_1\in\widetilde{b_2}$.} \noindent Thus by Proposition \ref{sctube1}, there exists a scattering-symplectomorphism between $\Psi^*\omega$ and $\widetilde{\omega}$ on a tubular neighborhood of $Z$.  \end{proof}

\subsubsection{Scattering-Symplectic Spheres} We will conclude this section by providing an example of scattering-symplectic manifolds. All even dimensional spheres admit scattering-symplectic structures. Let $(x_1,y_1,\dots,x_n,y_n,z)$ be global coordinates in $\mathbb{R}^{2n+1}$. Consider the sphere

\centerline{$\displaystyle{\mathbb{S}^{2n}=\left\{{\sum_{i=1}^{n}(x_i^2+y_i^2)+z^2=1}\right\}}$} \noindent with equator

\centerline{$\displaystyle{\mathbb{S}^{2n-1}=\left\{{\sum_{i=1}^n(x_i^2+y_i^2)=1, z=0}\right\}}.$} \vspace{1ex}

\begin{fig}  \begin{center} $\mathbb{S}^{2n}$ embedded in $\mathbb{R}^{n+1}$ \vspace{2ex}
 
\includegraphics[scale=.35]{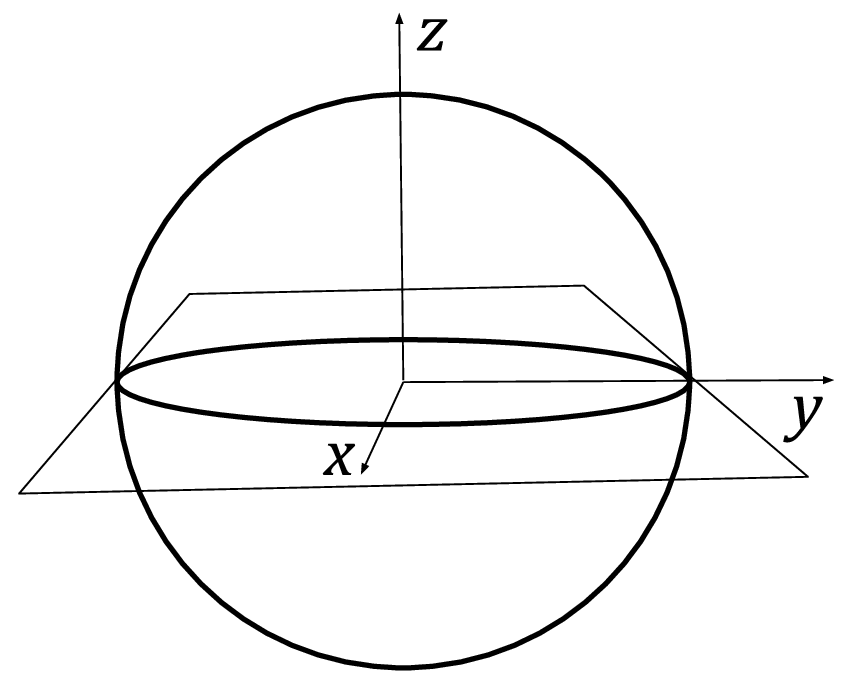}\end{center} \end{fig}

\noindent We define a one form $\displaystyle{\sigma=\frac{1}{2}\sum_{i=1}^{n}(x_idy_i-y_idx_i)}$. Consider the scattering form 

\centerline{$\displaystyle{\beta = -2\frac{dz}{z^3}\wedge\sigma+\frac{1}{z^2}d\sigma}$} \noindent restricted to $\mathbb{S}^{2n}$. 

\begin{proposition}\label{scspheres} $(\mathbb{S}^{2n},\mathbb{S}^{2n-1}, \beta)$ is a scattering-symplectic manifold. \end{proposition}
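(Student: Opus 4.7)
The plan is to verify the three conditions for $\beta$ to be a scattering-symplectic form on $(\mathbb{S}^{2n},\mathbb{S}^{2n-1})$: (i) $\beta$ is a well-defined scattering 2-form, (ii) $\beta$ is closed, and (iii) $\beta$ is non-degenerate. Observe first that the restriction of $z$ to $\mathbb{S}^{2n}$ is a global defining function for the equator: it vanishes exactly on $\mathbb{S}^{2n-1}$, and $\partial_z$ is tangent to the sphere at every equatorial point, so $dz$ is nonzero there. Using $z$, the formula $\beta=-2\frac{dz}{z^3}\wedge\sigma+\frac{d\sigma}{z^2}$ identifies $\beta$ as a section of $\wedge^2\,{}^{sc}T^*\mathbb{S}^{2n}$, since $\frac{dz}{z^2}$ is a scattering 1-form and $\sigma$, $d\sigma$ produce forms of the shape $\frac{dx_i}{z},\frac{dy_i}{z}$.

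For closedness, a direct computation shows $\beta=d(\sigma/z^2)$ on $\mathbb{S}^{2n}\setminus\mathbb{S}^{2n-1}$. Since the scattering differential agrees with the ordinary exterior derivative away from $Z$, and since $\beta$ extends smoothly across $Z$ as a scattering form, this forces $\beta$ to be a closed scattering 2-form on all of $\mathbb{S}^{2n}$.

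For non-degeneracy I would argue in two regions. Away from the equator I would use the diffeomorphism $\Phi:\mathbb{S}^{2n}\cap\{z>0\}\to\mathbb{R}^{2n}$ given by $(x_i,y_i,z)\mapsto(p_i,q_i):=(x_i/z,y_i/z)$, and its analogue for $\{z<0\}$. Substituting $x_i=zp_i,y_i=zq_i$ into $\sigma$ yields $\sigma=\tfrac{z^2}{2}\sum(p_i\,dq_i-q_i\,dp_i)$, so $\sigma/z^2$ pulls back to the standard Liouville form and $\beta=d(\sigma/z^2)$ pulls back to $\sum dp_i\wedge dq_i$, the standard symplectic form on $\mathbb{R}^{2n}$ (recovering the introduction's description). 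Near the equator I would compute
$$\beta^n=-\frac{2n}{z^{2n+1}}dz\wedge\sigma\wedge(d\sigma)^{n-1}+\frac{(d\sigma)^n}{z^{2n}},$$
and check that the $z^{-2n-1}$-term is a non-vanishing scattering top form. This reduces to showing that $\sigma\wedge(d\sigma)^{n-1}$ restricts to a nowhere-zero $(2n-1)$-form on $\mathbb{S}^{2n-1}$, which holds because $\sigma|_{\mathbb{S}^{2n-1}}$ is (up to scalar) the standard contact form on the sphere.

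The main obstacle is the equatorial non-degeneracy: one has to verify that the leading coefficient of $\beta^n$ genuinely survives as a scattering volume form on $\mathbb{S}^{2n}$ after pulling back from $\mathbb{R}^{2n+1}$. The key input is the identification of $\sigma|_{\mathbb{S}^{2n-1}}$ with the standard contact form, together with the fact that $\partial_z$ is transverse to $\mathbb{S}^{2n-1}$ inside $\mathbb{S}^{2n}$, so that $dz$ together with a contact volume form on $\mathbb{S}^{2n-1}$ generates the local scattering volume $\frac{dz\wedge(\text{vol}_{\mathbb{S}^{2n-1}})}{z^{2n+1}}$. Once this is in place the result follows immediately.
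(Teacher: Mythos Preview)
Your argument is correct and takes a different, more conceptual route than the paper. The paper checks non-degeneracy chart by chart: in each $U_{x_i}=\mathbb{S}^{2n}\setminus\{x_i=0\}$ (and similarly $U_{y_i}$) it writes $\beta$ out explicitly in the coordinates $(y_1,x_2,\dots,y_n,z)$, identifies the relevant coefficients, and verifies they do not vanish; the poles are handled separately. You instead split into $\{z\neq 0\}$ and $\{z=0\}$. On $\{z>0\}$ your map $\Phi(x_i,y_i,z)=(x_i/z,y_i/z)$ is precisely the inverse of the radial compactification from the introduction, and your computation $\sigma/z^2=\tfrac12\sum(p_i\,dq_i-q_i\,dp_i)$ cleanly shows $\beta$ pulls back to the standard $\omega_{\mathrm{std}}$; this avoids all the chart-by-chart bookkeeping. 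At the equator your reduction to $\sigma\wedge(d\sigma)^{n-1}|_{\mathbb{S}^{2n-1}}\neq 0$ is exactly the content of Proposition~\ref{contactprop} specialised to this example, and the step you flag as the ``main obstacle'' is genuinely fine: since $\sigma$ and $d\sigma$ on $\mathbb{R}^{2n+1}$ contain no $dz$, their pullbacks to $T\mathbb{S}^{2n}$ annihilate $\partial_z$ along $\{z=0\}$, so $dz\wedge\sigma\wedge(d\sigma)^{n-1}$ evaluated on $(\partial_z,e_1,\dots,e_{2n-1})$ reduces exactly to $(i^*\sigma)\wedge(d\,i^*\sigma)^{n-1}(e_1,\dots,e_{2n-1})$, which is the contact volume. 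The paper's approach is more self-contained and produces an explicit local expression for $\beta$; yours is shorter and makes the link to the introduction and to the induced contact structure explicit.
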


\begin{proof} First notice that $\displaystyle{\beta = d\left(\frac{\sigma}{z^2}\right)}$. Thus $\beta$ is closed. We point out that this does not make $\beta$ exact as a scattering form. We are left to show that $\beta$ is non-degenerate on the $2n$-sphere. In the set $U_{x_1}:=\mathbb{S}^{2n}\setminus\left\{{x_1=0}\right\}$, we have smooth coordinates $(y_1,x_2,y_2,\dots,x_n,y_n,z)$. By rewriting 

\centerline{$\displaystyle{x_1=\sqrt{1-y_1^2-\sum_{i=2}^n(x_i^2+y_i^2)-z^2}}$} \noindent and

\centerline{$\displaystyle{dx_1=\dfrac{-\Big(y_1dy_1+\displaystyle{\sum_{i=2}^n(x_idx_i+y_idy_i)}+zdz\Big)}{x_1}},$} \noindent we see that in $U_{x_1}$,

\centerline{$\displaystyle{\beta = -\dfrac{dz}{z^3}\wedge\Big(x_1dy_1+\dfrac{y_1^2dy_1}{x_1}+\dfrac{y_1}{x_1}\Big(\displaystyle{\sum_{i=2}^n(x_idx_i+y_idy_i)}\Big)+\sum_{i=2}^{n}(x_idy_i-y_idx_i)\Big)}$}

\centerline{$\displaystyle{+\dfrac{1}{z^2}\dfrac{-\Big(\displaystyle{\sum_{i=2}^n(x_idx_i+y_idy_i)}+zdz\Big)}{x_1}\wedge dy_1+\dfrac{1}{z^2}\displaystyle{\sum_{i=2}^{n}(dx_i\wedge dy_i)}.}$} The coefficient is 1 for terms of the form 

\centerline{$\dfrac{1}{z^2}dx_i\wedge dy_i$} \noindent for $i=2,\dots,n$. Thus to show non-degeneracy, it suffices to show that the coefficient of the term $\displaystyle{\frac{dz}{z^3}\wedge dy_1}$ is always nonzero. This coefficient is 

\centerline{$-\left(x_1+\dfrac{y_1^2}{x_1}+\dfrac{z^2}{x_1}\right)$.} \noindent Since, $x_1\neq 0$, this function is always nonzero in $U_{x_1}$. By symmetry, this argument shows that $\beta$ is non-degenerate in the sets  $U_{x_i}=\mathbb{S}^{2n}\setminus\left\{{x_i=0}\right\}$, and $U_{y_i}=\mathbb{S}^{2n}\setminus\left\{{y_i=0}\right\}$ for $i=1,\dots,n$. We are left to consider $\beta$ at the poles where $z$ is $\pm 1$, that is at the points $(0,0,0,\dots,0,0,\pm 1)$. Here, $\beta = \displaystyle{\sum_{i=1}^{n}dx_i\wedge dy_i}$. Thus $\beta$ is non-degenerate on the $2n$-sphere. 
\end{proof}

\begin{remark} For cohomological reasons, there are no symplectic spheres in dimensions greater than 2. Similarly, M\u{a}rcut and Orsono-Torres \cite{muarcuct2014cohomological} proved that a compact b-symplectic manifold $(M,Z)$ of dimension $2n$ has a class $c$ in $H^2(M)$ such that $c^{n-1}$ is nonzero in $H^{2n-2}(M)$. Thus there are also no b-symplectic spheres in dimensions greater than 2.\end{remark}

\begin{remark} This shows for a scattering symplectic structure to exist on a compact manifold $(M,Z)$, $Z$ must admit a co-orientable contact structure. Strikingly, it also shows that sometimes this is all you need!   
\end{remark} 

\subsection{Contact hypersurfaces} \label{Contact}
We have seen that every scattering-symplectic manifold has a co-oriented contact hypersurface. Now we will explore the opposite question: given a contact hypersurface, does it appear as the singular hypersurface of a scattering symplectic manifold? Given a co-oriented contact manifold $(Z,\alpha)$, there always exists a non-compact scattering symplectic manifold $(M,\omega)$ with $Z$ as singular hypersurface such that $\omega$ induces $\alpha$ on $Z$. 

\begin{proposition} Let $Z$ be a $2n-1$ dimensional contact manifold with globally defined contact form $\alpha$. Let $\widetilde{Z}=Z\times\mathbb{R}$ and let $\pi:\widetilde{Z}\to Z$ be the obvious projection. Let $x$ be a coordinate on $\mathbb{R}$. Then 

\centerline{$\displaystyle{\omega = d\left(\frac{\pi^*\alpha}{x^2}\right)}$} \noindent is a scattering symplectic form on $(\widetilde{Z},Z)$. We call $(\widetilde{Z},Z,\omega)$ the {\bf scattering symplectization} of $(Z,\alpha)$.  \end{proposition}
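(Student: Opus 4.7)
The plan is to verify two things: closedness is automatic, and non-degeneracy follows from the contact condition on $\alpha$ together with a dimension count that kills the "worst" term in $\omega^n$.

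First I would set up a local picture. Write $\omega$ explicitly by carrying out the exterior derivative on a product $Z \times \mathbb{R}$:
\begin{equation*}
\omega = d\!\left(\frac{\pi^*\alpha}{x^2}\right) = -\frac{2\,dx}{x^3}\wedge \pi^*\alpha + \frac{\pi^*d\alpha}{x^2}.
\end{equation*}
This is in the model form $\dfrac{dx}{x^3}\wedge\alpha' + \dfrac{\beta'}{x^2}$ of a smooth section of $\wedge^2\,{}^{sc}T^*\widetilde{Z}$ in the tubular neighborhood $Z\times (-\varepsilon,\varepsilon)$, with $\alpha' = -2\pi^*\alpha$ and $\beta' = \pi^*d\alpha$. (Away from $Z$, $\omega$ is manifestly a smooth ordinary 2-form, and the above expression shows it extends smoothly to a scattering 2-form across $\{x=0\}$.) Closedness is immediate from $\omega = d(\pi^*\alpha/x^2)$, interpreted in the scattering de Rham complex: $\pi^*\alpha/x^2$ is a scattering 1-form, and the scattering differential agrees with the ordinary $d$ off $Z$ and extends.

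Next I would verify non-degeneracy by computing $\omega^n$, where $\dim \widetilde{Z} = 2n$. Expanding the $n$-th wedge of the two-term expression above and using $dx\wedge dx = 0$ gives
\begin{equation*}
\omega^n \;=\; n\left(-\frac{2\,dx}{x^3}\wedge\pi^*\alpha\right)\wedge \left(\frac{\pi^*d\alpha}{x^2}\right)^{n-1} \;+\; \left(\frac{\pi^*d\alpha}{x^2}\right)^n.
\end{equation*}
The second summand is the pullback of a $2n$-form from the $(2n-1)$-dimensional manifold $Z$, hence vanishes identically. The first summand simplifies to
\begin{equation*}
\omega^n \;=\; -\frac{2n}{x^{2n+1}}\, dx\wedge \pi^*\!\bigl(\alpha\wedge (d\alpha)^{n-1}\bigr).
\end{equation*}
Since $\alpha$ is a contact form on the $(2n-1)$-dimensional $Z$, the form $\alpha\wedge(d\alpha)^{n-1}$ is a volume form there. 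Comparing with the local scattering top frame $\frac{dx}{x^{2n+1}}\wedge dy_1\wedge\cdots\wedge dy_{2n-1}$ on $\wedge^{2n}\,{}^{sc}T^*\widetilde{Z}$, the coefficient of $\omega^n$ is a nonzero multiple of a nowhere-vanishing function on $Z$, so $\omega^n$ is nowhere zero as a scattering top form on all of $\widetilde{Z}$, including along $Z$.

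The only mildly subtle point, and the step I expect to be the main obstacle, is the dimensional vanishing of $\bigl(\pi^*d\alpha/x^2\bigr)^n$: it is the reason the expression looks non-degenerate despite $\alpha$ only being contact (not symplectic). Once that observation is in hand, the rest is bookkeeping. Away from $Z$, $\omega$ is just the pullback to a collar of the usual symplectization-type form $d(\pi^*\alpha/x^2)$, so non-degeneracy off $Z$ follows from the same calculation, and the scattering framework gives the clean extension across $\{x=0\}$.
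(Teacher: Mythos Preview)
Your proof is correct and follows essentially the same approach as the paper: write $\omega$ explicitly, note closedness is immediate since $\omega$ is exact as a scattering form, then compute $\omega^n$ and use the contact condition $\alpha\wedge(d\alpha)^{n-1}\neq 0$ to conclude non-degeneracy. Your treatment is in fact slightly more careful than the paper's---you explicitly explain why the $(\pi^*d\alpha)^n$ term vanishes by dimension, and you retain the binomial factor $n$ in the coefficient (the paper writes $-2/x^{2n+1}$ rather than $-2n/x^{2n+1}$, a harmless slip).
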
 

\begin{proof}  It is clear by construction that $\omega$ is closed. This does not make $\omega$ exact as a scattering form. For ease of notation we will write $\alpha$ rather than $\pi^*\alpha$. Then 

\centerline{$\begin{displaystyle}\omega = -\frac{2}{x^3} dx\wedge \alpha+\frac{d_Z\alpha}{x^2} \end{displaystyle}.$} \noindent To check non-degeneracy of $\omega$, notice that 

\centerline{$\displaystyle{\omega^n=-\frac{2}{x^{2n+1}} dx\wedge \alpha\wedge(d_Z\alpha)^{n-1}}.$} \noindent A consequence of $Z$ being contact is that $\alpha\wedge(d_Z\alpha)^{n-1}\neq 0$. Thus $\omega$ is a closed non-degenerate scattering form on $(\widetilde{Z},Z)$.   
\end{proof}

In this construction, away from $\left\{{x=0}\right\}$, the one form 

\centerline{$\dfrac{\alpha}{x^2}$} \noindent is a smooth primitive for $\omega$. In particular, the vector field 

\centerline{$V=-\dfrac{x}{2}\dfrac{\partial }{\partial x}$} \noindent is non-zero when $x\neq 0$, is transverse to each level set $Z = \left\{{x=c}\right\}$ for nonzero constants $c\in\mathbb{R}$, and satisfies 

\centerline{$i_V\omega=\alpha.$} \noindent  
This precisely means that $\widetilde{Z}\setminus\left\{{|x|<\varepsilon}\right\}$, for any $\varepsilon>0$, is a disjoint union of strong symplectic fillings of the contact manifold $(Z,\alpha)$. 

This additional structure - a Liouville vector field $V$ giving this relation between $\omega$ and $\alpha$ - is not a feature of all scattering-symplectic manifolds. Indeed, we will now show that whether or not a scattering-symplectic manifold is a strong symplectic filling in this sense can be read off of the sc- cohomology class of the scattering symplectic form. 

Recall from Section \ref{scatteringsymplecticgeometry}, that we can associate to $[\omega]\in {}^{sc}H^2(M)$ a decomposition $(a,b_1,b_2)$  where $a\in \Omega^{1}(Z;(N^*Z)^{-2})$, $b_1\in H^1(Z)$, and $b_2\in H^2(M)$. There is a tubular neighborhood of $Z$ with a given local $Z$ defining function $x$ giving us a trivialization $t_x:N^*Z\to \mathbb{R}$. By Proposition \ref{sctube1}, this defines a smooth contact form $\alpha=(t_x)_*(a)\in\Omega^1(Z)$, $\beta_1\in b_1$, and $\beta_2\in b_2$ such that 

\centerline{$\displaystyle{\omega=\dfrac{dx}{x^3}\wedge (\alpha+x^2\beta_1)-\dfrac{d\alpha}{2x^2}+\beta_2.}$} \noindent In the following propositions we will always be working with $\omega$ in such a tubular neighborhood. 

\begin{proposition}\label{notfill} Let $(M,Z,\omega)$ be a scattering-symplectic manifold with singular contact hypersurface $(Z,\alpha)$. If $[\omega]$ has cohomology decomposition $(a,b_1,b_2)$ with $b_1$ or $b_2 \neq 0$, then for $\varepsilon>0$ small, $(M\setminus \left\{|x|<\varepsilon\right\},\omega)$ is not a strong symplectic filling of $(Z,\alpha)$.   
\end{proposition}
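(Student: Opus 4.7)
The strategy is to argue by contradiction: assume $(M\setminus\{|x|<\varepsilon\},\omega)$ is a strong symplectic filling of $(Z,\alpha)$ for arbitrarily small $\varepsilon > 0$, and derive that $b_1 = 0$ and $b_2 = 0$. First I invoke Proposition \ref{sctube1} to put $\omega$ in the normal form
$$\omega = \frac{dx}{x^3}\wedge(\alpha + x^2\beta_1) - \frac{d\alpha}{2x^2} + \beta_2$$
on a tubular neighborhood of $Z$, rewritten via the identities $\tfrac{dx}{x^3}\wedge\alpha-\tfrac{d\alpha}{2x^2}=-\tfrac{1}{2}d(\alpha/x^2)$ and $\tfrac{dx}{x}\wedge\beta_1=d(\log|x|\cdot\beta_1)$ as $\omega = -\tfrac{1}{2}d(\alpha/x^2) + d(\log|x|\cdot\beta_1) + \beta_2$. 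The filling hypothesis supplies a smooth Liouville $1$-form $\lambda$ with $d\lambda=\omega$ on a collar of $\{x=\varepsilon\}$ and $\lambda|_{\{x=\varepsilon\}}$ a contact form for $\xi=\ker\alpha$. Because the collar deformation-retracts to $Z$, the class $[\omega|_{\text{collar}}]\in H^2(\text{collar})\cong H^2(Z)$ equals $i^{*}b_2$, so existence of $\lambda$ already forces $i^{*}b_2 = 0$.

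Next I write every such primitive as
$$\lambda = -\frac{\alpha}{2x^2} + \log|x|\cdot\beta_1 + \widetilde{\mu},$$
for a smooth $1$-form $\widetilde{\mu}$ on the collar with $d\widetilde{\mu}=\beta_2$. Setting $\lambda|_{\{x=\varepsilon\}}=c\alpha$ for a nowhere-vanishing function $c$ produces the $1$-form identity on $Z$
$$\log\varepsilon\cdot\beta_1 + \widetilde{\mu}|_Z = \Big(c + \tfrac{1}{2\varepsilon^2}\Big)\alpha.$$
Applying $d$ yields $\beta_2|_Z = dc\wedge\alpha + (c+\tfrac{1}{2\varepsilon^2})d\alpha$; wedging with $\alpha\wedge(d\alpha)^{n-2}$ and using the contact condition $\alpha\wedge(d\alpha)^{n-1}\neq 0$ pins down $c+\tfrac{1}{2\varepsilon^2}$ as an explicit bounded, $\varepsilon$-independent function $\Theta$ on $Z$ built from $\beta_2|_Z$, $\alpha$, and $d\alpha$. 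The residual identity
$$\log\varepsilon\cdot\beta_1 + \widetilde{\mu}|_Z = \Theta\alpha$$
must then hold for a sequence $\varepsilon\to 0$.

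Passing to cohomology classes in $H^1(Z)$, the coefficient $\log\varepsilon\cdot b_1$ diverges while the right-hand side carries a fixed, $\varepsilon$-independent class; the remaining freedom in $\widetilde{\mu}$ modulo closed forms on the collar only allows absorption of one fixed class in $H^1(Z)$, not a divergent family, so $b_1 = 0$. A parallel analysis of the $2$-form constraint $\beta_2|_Z = d(\Theta\alpha)$ pins down $\beta_2|_Z$, and combining with Mayer--Vietoris for $M = (M\setminus\{|x|<\varepsilon\})\cup\{|x|<\varepsilon\}$, where the second piece retracts onto $Z$, upgrades $i^{*}b_2=0$ to $b_2=0$ in $H^2(M)$. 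This contradicts the hypothesis that $b_1$ or $b_2$ is nonzero.

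The main obstacle is the asymptotic step isolating the $\log\varepsilon$-divergent cohomological contribution: one must carefully separate the $\varepsilon$-independent part of the identity from the $\log\varepsilon$ part, check that no choice of primitive $\widetilde{\mu}$ (exploiting all available gauge freedom on the collar) can cancel the divergent piece, and then promote the $Z$-level vanishing of the $b_2$ data to a global vanishing on $M$. The wedge calculation with $\alpha\wedge(d\alpha)^{n-2}$ is the decisive algebraic tool, and the $\varepsilon\to 0$ limit is what transforms a pointwise condition into the cohomological conclusion.
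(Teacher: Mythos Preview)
There is a genuine gap in the step forcing $b_1=0$. You argue by letting $\varepsilon\to 0$ and claiming that $\log\varepsilon\cdot b_1$ diverges while the ``freedom in $\widetilde\mu$'' is a single fixed class. But the Liouville primitive $\lambda_\varepsilon$, and hence $\widetilde\mu_\varepsilon$, is furnished by the filling structure on $M\setminus\{|x|<\varepsilon\}$, and nothing ties the fillings for different $\varepsilon$ together: the hypothesis only asserts that for each small $\varepsilon$ \emph{some} Liouville vector field exists. So $[\widetilde\mu_\varepsilon|_{\{x=\varepsilon\}}]\in H^1(Z)$ is free to equal $[\Theta\alpha]-\log\varepsilon\cdot b_1$ for every $\varepsilon$ separately, and no contradiction arises. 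In effect you have only used that $\omega$ admits \emph{some} primitive near the boundary whose restriction is a contact form for $\xi$; this is strictly weaker than the Liouville condition $\mathcal L_V\omega=\omega$. (The Mayer--Vietoris upgrade of $i^*b_2=0$ to $b_2=0$ is also not justified: exactness of $\omega$ on a collar of the boundary says nothing about $[\omega]$ on the rest of $M\setminus\{|x|<\varepsilon\}$.)

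The paper's argument works at a single fixed $\varepsilon$ and exploits the full Liouville structure via Lemma~\ref{gluelemma}: on a collar the Liouville primitive is not an arbitrary primitive but has the specific shape $f\alpha$, a functional multiple of the pulled-back contact form. Equating $d(f\alpha)$ with the normal form gives $\beta_1=(x\partial_xf-x^{-2})\alpha$; closedness of $\beta_1$ then yields an ODE for $f$ whose solution is $f=-\tfrac{1}{2x^2}$, forcing $\beta_1=\beta_2=0$ directly. The structural input your argument is missing is precisely that a Liouville primitive is $f\alpha$ on a whole collar, not merely a primitive whose boundary trace is $c\alpha$.
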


The following lemma gives us a normal form for strong convex and strong concave fillings in a neighborhood of the boundary. 

\begin{lemma} \label{gluelemma} If $(M,\omega)$ is a strong convex symplectic filling of $(Z,\xi)$, then for some $c>0$, there exists a collar neighborhood $Z\times [0,c)_r$ of $Z$ on which 

\centerline{$\omega=d(e^{-r}\alpha)$} \noindent and, given the projection $p:Z\times [0,c)\to Z$, $\alpha=p^*(\widetilde{\alpha})$ for an $\widetilde{\alpha}$ satisfying $\ker\widetilde{\alpha}=\xi$. 

If $(M,\omega)$ is a strong concave symplectic filling of $(Z,\xi)$, then for some $c>0$, there exists a collar neighborhood $Z\times [0,c)_r$ of $Z$ on which 

\centerline{$\omega=d(e^{r}\alpha)$} \noindent and, given the projection $p:Z\times [0,c)\to Z$, $\alpha=p^*(\widetilde{\alpha})$ for an $\widetilde{\alpha}$ satisfying $\ker\widetilde{\alpha}=\xi$. 
\end{lemma} 

\begin{proof} By definition of convex strong symplectic filling, near $Z$ there exists a nowhere vanishing vector field $v$ transverse to $Z$ such that $\mathcal{L}_{-v}\omega=\omega$. By using the flow of $\vec{v}$, we can choose a collar neighborhood $Z\times  [0,c)_r$ of $Z$ such that $r$ is the coordinate for $[0,c)$, $c>0$, and  

\centerline{$\displaystyle{v|_{Z\times [0,c)}=\frac{\partial}{\partial r}.}$}

It follows from the definition of strong symplectic filling that 

\centerline{$\displaystyle{(i_{-\frac{\partial}{\partial r}}\omega)|_Z= \alpha}$} \noindent for some contact form $\alpha$ defining $\xi$ on $Z$. Given the projection 

\centerline{$p:Z\times [0,c)\to Z,$} \noindent let $\widetilde{\alpha}=p^*\alpha$. Let 

\centerline{$\gamma = i_{-\frac{\partial}{\partial_r}}\omega.$} \noindent Then because $\mathcal{L}_{-v} \omega = \omega$, we have $d\gamma = \omega$. 

Let $x_1,x_2,\dots,x_n$ be any set of coordinates in $Z$. Then in these coordinates, 

\centerline{$\displaystyle{\gamma = gdr+\sum_{i}f_idx_i}$} \noindent for some smooth functions $g,f_i\in\mathcal{C}^{\infty}(Z\times [0,c))$. We can compute 

\centerline{$\displaystyle{d\gamma = \sum_j \left( \frac{\partial g}{\partial x_j}dx_j\wedge dr +\sum_{i}  \frac{\partial f_i}{\partial x_j}dx_j\wedge dx_i\right)+\sum_i \frac{\partial f_i}{\partial r}dr\wedge dx_i}.$} 

Then $\displaystyle{i_{-\frac{\partial}{\partial_r}}d\gamma = \gamma}$ gives the relations 

\centerline{$\displaystyle{ \sum_i  \left(\frac{\partial g}{\partial x_i} - \frac{\partial f_i}{\partial r}\right) dx_i=gdr+\sum_{i}f_idx_i}.$} \noindent In other words, 

\centerline{$g=0,\text{ and}-\frac{\partial f_i}{\partial r}=f_i.$} \noindent Thus $f_i=c_i\cdot e^{-r}$ for some functions $c_i$ constant with respect to $r$. Since $\gamma|_{\left\{r=0\right\}}=\alpha$, we have that 

\centerline{$\displaystyle{\gamma = e^{-r}\widetilde{\alpha}\text{ and }\omega=d(e^{-r}\widetilde{\alpha}).}$} 

A similar computation shows the statement for a concave filling. 
\end{proof} 

 We are now prepared to prove Proposition \ref{notfill}. 

\begin{proof}  Consider a tubular neighborhood $\tau=Z\times(-\varepsilon,\varepsilon)_x$ of $Z$. Assume, for a contradiction, that $M\setminus \left\{{x<\varepsilon}\right\}$ for $\varepsilon>0$ is a strong symplectic filling of $(Z,\alpha)$. By Lemma \ref{gluelemma}, there is a smooth function $f\in\mathcal{C}^\infty(\tau)$ such that  

\centerline{$\displaystyle{\omega=d(f\alpha)=\partial_xfdx\wedge\alpha+d_Zf\wedge\alpha+fd\alpha.}$} \noindent 

By the proof of Proposition \ref{sctube1}, there exists a tubular neighborhood $Z\times(-\widetilde{\varepsilon},\widetilde{\varepsilon})_x$ with $\widetilde{\varepsilon}$ chosen smaller than $\varepsilon$ as necessary such that    

\centerline{$\displaystyle{\omega=\dfrac{dx}{x^3}\wedge (\alpha+x^2\beta_1)-\dfrac{d\alpha}{2x^2}+\beta_2}$} \noindent for $\alpha,\beta_1,\beta_2\in\Omega^*(Z)$.

Thus 

\centerline{$\displaystyle{\dfrac{dx}{x^3}\wedge(\alpha+\beta_1x^2)=\partial_xfdx\wedge\alpha\text{ and }\dfrac{\alpha}{x^3}+\dfrac{\beta_1}{x}=\partial_xf\alpha.}$} \noindent  We can solve for $\beta_1$, 

\centerline{$\displaystyle{\beta_1=(x\partial_xf-\dfrac{1}{x^2})\alpha.}$} \noindent Since $\beta_1$ is closed, 

\centerline{$\displaystyle{0=d\beta_1=(x\partial_xf-\dfrac{1}{x^2})d\alpha+(\partial_xf+x\partial_{xx}f+\dfrac{2}{x^3})dx\wedge\alpha+xd_Z(\partial_xf)\wedge\alpha.}$} \noindent Thus 

\centerline{$\displaystyle{\partial_xf+x\partial_{xx}f=-\dfrac{2}{x^3}}$} \noindent with solution $f=-\dfrac{1}{2x^2}$. 

Then 

\centerline{$\displaystyle{\omega=d(-\dfrac{1}{2x^2}\alpha)=\dfrac{dx}{x^3}\wedge\alpha-\dfrac{d\alpha}{2x^2}\text{ and }\beta_1=\beta_2=0.}$} \noindent Thus we have reached a contradiction. \end{proof}

On the other hand, if $b_1,b_2=0$ in a cohomological decomposition of a scattering symplectic form $[\omega]$, then we always have the additional structure of a strong symplectic filling. And in fact, that filling is always convex, meaning the Liouville vector field points outward at $Z$.  

\begin{proposition} Let $(M,Z,\omega)$ be a scattering-symplectic manifold with singular contact hypersurface $(Z,\alpha)$ separating $M$. If $[\omega]$ has cohomology decomposition $(a,0,0)$, then for $\varepsilon>0$ small, $(M\setminus \left\{|x|<\varepsilon\right\},\omega)=(M_{x\geq \varepsilon},\omega)\cup(M_{x\leq \varepsilon},\omega)$ is a collection of symplectic manifolds  each with contact boundary $(Z,\alpha)$ such that $\omega$ is a convex strong symplectic filling of $\alpha$.\end{proposition}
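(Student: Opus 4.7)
The plan is to apply the tubular neighborhood result of Proposition \ref{sctube1} to put $\omega$ in a convenient normal form near $Z$, and then exhibit an explicit outward-pointing Liouville vector field. Since the decomposition $(a,0,0)$ has $b_1=0 \in H^1(Z)$ and $b_2=0 \in H^2(M)$, when invoking Proposition \ref{sctube1} I can choose the representatives $\beta_1=0$ and $\beta_2=0$. This produces a tubular neighborhood $U = Z \times (-\varepsilon_0,\varepsilon_0)_x$ of $Z$ together with a scattering-symplectomorphism pulling $\omega$ back to the model form
\[
\omega_0 \;=\; \frac{dx}{x^3}\wedge \alpha \;-\; \frac{d\alpha}{2x^2} \;=\; d\!\left(-\frac{\alpha}{2x^2}\right),
\]
so that $\lambda := -\alpha/(2x^2)$ is a smooth symplectic primitive of $\omega_0$ on $U\setminus Z$.

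Next I would verify that $V := -\tfrac{x}{2}\,\partial_x$ is a Liouville vector field for $\omega_0$. Since $\alpha$ is pulled back from $Z$ and thus contains no $dx$, a one-line contraction gives $i_V \omega_0 = -\alpha/(2x^2) = \lambda$, and Cartan's formula then yields $\mathcal{L}_V\omega_0 = d(i_V\omega_0) = d\lambda = \omega_0$. For any $0 < \varepsilon < \varepsilon_0$, the field $V$ is transverse to each level set $\{x = \pm\varepsilon\}$, and because $V$ always points in the direction of decreasing $|x|$ it points outward from both $M_{x\geq \varepsilon}$ and $M_{x\leq -\varepsilon}$; each piece is therefore a strong \emph{convex} filling. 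The contact form induced on each boundary component is $i_V\omega_0|_{\{x = \pm\varepsilon\}} = -\alpha/(2\varepsilon^2)$, a nonzero constant multiple of $\alpha$, so it cuts out the contact structure $\xi = \ker\alpha$ and identifies the boundary with $(Z,\xi)$. Transporting $V$ and $\lambda$ back along the scattering-symplectomorphism from Proposition \ref{sctube1} produces the required Liouville data for the original $\omega$.

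The main obstacle is essentially cosmetic once Proposition \ref{sctube1} is in hand: the substantive analytic work has been done there, and with $\beta_1 = \beta_2 = 0$ both the primitive $-\alpha/(2x^2)$ and the Liouville vector field $-\tfrac{x}{2}\partial_x$ are forced. The one subtlety worth flagging is that the normal form only holds on the tubular neighborhood $U$, so $V$ and $\lambda$ are defined only on $U \setminus Z$; this is harmless since the definition of a strong symplectic filling requires the Liouville vector field only on a collar of the boundary, and it suffices to choose $\varepsilon < \varepsilon_0$ so that $\{x = \pm\varepsilon\}\subset U$. The sign computation above also shows that the filling is forced to be convex on both sides of $Z$, neatly complementing the obstruction established in Proposition \ref{notfill}.
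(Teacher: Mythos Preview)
Your proof is correct and follows essentially the same approach as the paper: invoke Proposition \ref{sctube1} with $\beta_1=\beta_2=0$ to reduce to the model $\omega_0 = \dfrac{dx}{x^3}\wedge\alpha - \dfrac{d\alpha}{2x^2}$, then check directly that $V=-\tfrac{x}{2}\partial_x$ is an outward-pointing Liouville field on each side. Your computation $i_V\omega_0=-\alpha/(2x^2)$ is in fact the correct one (the paper records $\alpha/x^2$, off by a harmless factor of $-\tfrac12$), and your explicit remark that the Liouville data need only exist on a collar is a useful clarification.
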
 

\begin{proof} Choose a tubular neighborhood $\tau=Z\times(-\varepsilon,\varepsilon)_x$ of $Z$ as in Proposition \ref{sctube1}. Then 

\centerline{$\displaystyle{\omega|_\tau=\dfrac{dx}{x^3}\wedge\alpha-\dfrac{d\alpha}{2x^2}.}$} \noindent Define $M_{x\geq \varepsilon}$ to be the connected component of $M\setminus (Z\times (-\varepsilon,\varepsilon))$ containing $Z\times \left\{{\varepsilon}\right\}$ and let its symplectic form be the scattering symplectic form on $M$ restricted to $M_{x\geq \varepsilon}$. Similarly, define $M_{x\leq\varepsilon}$ to be the connected component of $M\setminus (Z\times (-\varepsilon,\varepsilon))$ containing $Z\times \left\{{-\varepsilon}\right\}$ and let its symplectic form be the scattering symplectic form on $M$ restricted to $M_{x\leq \varepsilon}$. 

Let $V=-\dfrac{x}{2}\partial_x$. Notice for all points in $Z\times \left\{{\varepsilon}\right\}$ and $Z\times \left\{{-\varepsilon}\right\}$ that $V$ is tranverse to $Z$. Next, observe that $i_V\omega =\dfrac{\alpha}{x^2}$. Thus $\mathcal{L}_V\omega=di_V\omega=\omega$. Notice that $\omega$ on $M_{x\geq \varepsilon}$ and $\omega$ on $M_{x\leq \varepsilon}$ induce the same contact form. In particular, 

\centerline{$\displaystyle{-\dfrac{\alpha}{2x^2}\Big|_{Z\times \left\{{\varepsilon}\right\}}=-\dfrac{\alpha}{2(\varepsilon)^2}=-\dfrac{\alpha}{2(-\varepsilon)^2}=-\dfrac{\alpha}{2x^2}\Big|_{Z\times \left\{{-\varepsilon}\right\}}.}$}

Further, notice that $V\big|_{Z\times \left\{{\varepsilon}\right\}}=-\dfrac{\varepsilon}{2}\partial_x$ is an outward pointing vector. Similarly, 

\centerline{$\displaystyle{V\big|_{Z\times \left\{{-\varepsilon}\right\}}=-\dfrac{-\varepsilon}{2}\partial_x}$} \noindent is outward pointing. Thus both fillings are convex. 
\end{proof} 

\subsection{Symplectic Gluing} \label{gluingsection}
We will demonstrate how to glue strong symplectic fillings along a common boundary. In particular, by allowing scattering-symplectic structures, we can glue convex fillings to convex fillings and by allowing folded-symplectic structures, we can glue concave fillings to concave fillings. 

 We will begin by recalling how a strong concave and strong convex symplectic filling are glued to form a symplectic manifold; see for example Theorem 5.4, \cite{mrowka2009low}. 

\begin{proposition}
Let $(M_1,\omega_1)$ and $(M_2,\omega_2)$ be a strong convex and strong concave symplectic filling respectively of $(Z,\xi)$. Then $\displaystyle{M_1\cup_Z M_2}$, the union of $M_1$ to $M_2$ at $Z$, has a symplectic structure $\omega$ such that $\omega|_{N_1}=\omega_1|_{N_1}$ and $\omega|_{N_2}=\omega_2|_{N_2}$ where $N_i=M_i\setminus U_i$ for a tubular neighborhood $U_i$ of $Z$ in $M_i$.\end{proposition}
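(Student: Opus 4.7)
The plan is to use Lemma \ref{gluelemma} to reduce to a standard symplectization model near $Z$, and then glue the two fillings inside the symplectization $(Z\times\mathbb{R}, d(e^t\alpha))$, where everything extends smoothly across $Z$.

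First I would invoke Lemma \ref{gluelemma} on each side. For the convex filling $(M_1,\omega_1)$ we obtain a collar neighborhood $U_1\cong Z\times[0,c_1)_{r}$ of $Z$ on which $\omega_1=d(e^{-r}\alpha_1)$, where $\alpha_1=p^*\widetilde\alpha_1$ for some contact form $\widetilde\alpha_1$ with $\ker\widetilde\alpha_1=\xi$. For the concave filling $(M_2,\omega_2)$ we obtain a collar $U_2\cong Z\times[0,c_2)_s$ on which $\omega_2=d(e^{s}\alpha_2)$, where $\alpha_2=p^*\widetilde\alpha_2$ with $\ker\widetilde\alpha_2=\xi$. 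Both $\widetilde\alpha_1$ and $\widetilde\alpha_2$ define the same co-oriented contact structure $\xi$, so they differ by a positive smooth factor $g\in\mathcal{C}^\infty(Z)$, say $\widetilde\alpha_1=g\widetilde\alpha_2$. The next step is to absorb $g$ into the collar coordinate: replacing $r$ by $r'=r-\ln g$ on the $M_1$-collar yields the normal form $\omega_1=d(e^{-r'}\widetilde\alpha_2)$ on a slightly shrunken collar, now with the same contact form $\widetilde\alpha_2$ on both sides. Relabeling, I may assume $\omega_1=d(e^{-r}\alpha)$ on $Z\times[0,\varepsilon)$ and $\omega_2=d(e^s\alpha)$ on $Z\times[0,\varepsilon)$ for a single contact form $\alpha$.

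Next I would pass to the symplectization $(\mathcal{S},\Omega)=(Z\times\mathbb{R}_t,d(e^t\alpha))$. Observe that the map $(z,r)\mapsto(z,-r)$ identifies the $M_1$-collar $Z\times[0,\varepsilon)_r$ with $Z\times(-\varepsilon,0]_t\subset\mathcal{S}$ symplectically, while $(z,s)\mapsto(z,s)$ identifies the $M_2$-collar with $Z\times[0,\varepsilon)_t\subset\mathcal{S}$ symplectically. Together these two embeddings cover the open slab $Z\times(-\varepsilon,\varepsilon)_t\subset\mathcal{S}$, each mapping onto its respective half. I would then form the glued manifold by the pushout
\begin{equation*}
M_1\cup_Z M_2 \;:=\; \bigl(M_1\sqcup (Z\times(-\varepsilon,\varepsilon))\sqcup M_2\bigr)\big/\!\sim,
\end{equation*}
where $\sim$ is the equivalence identifying a punctured collar $Z\times(-\varepsilon,0)_r\subset M_1$ with its image in the symplectization, and similarly for $M_2$. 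The smooth structure on this union is inherited from the symplectization in the slab and from $M_i$ outside the collars. Define $\omega$ to equal $\omega_i$ on $M_i$ and $\Omega$ on the slab; the identifications are built from symplectomorphisms, so $\omega$ is a well-defined closed $2$-form, and non-degeneracy is preserved piecewise. This produces the required symplectic structure on $M_1\cup_Z M_2$.

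The main obstacle, and the only real content beyond bookkeeping, is the reconciliation of the two a priori different contact representatives $\widetilde\alpha_1,\widetilde\alpha_2$ of $\xi$ by a collar reparametrization, together with verifying that this reparametrization still yields a smooth (not merely continuous) collar on which the normal form from Lemma \ref{gluelemma} holds. Once the two sides are expressed with the same contact form $\alpha$, the symplectization $d(e^t\alpha)$ supplies a single smooth symplectic model through which the gluing is transparent, and the conclusion $\omega|_{M_i\setminus Z}\simeq \omega_i$ is immediate because the gluing was carried out by symplectomorphisms on the overlap collars.
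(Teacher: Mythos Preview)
Your approach---normalize both collars via Lemma~\ref{gluelemma} and glue inside the symplectization $(Z\times\mathbb{R},\,d(e^t\alpha))$---is the same as the paper's, which attaches collars $Z\times(-c,0)$ to each side, identifies $r_1=-r_2$, and takes $\omega=d(e^{r}\alpha)$ on the resulting tube. You actually go a step further than the paper by addressing the reconciliation of the two a priori different contact forms $\widetilde\alpha_1,\widetilde\alpha_2$; the paper simply writes the same $\alpha$ on both sides without comment.

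One small point about that reconciliation: after the substitution $r'=r-\ln g$, the boundary $Z$ of $M_1$ sits at $r'=-\ln g(z)$, which depends on $z$, so the reparametrized collar is not literally a product $Z\times[0,\varepsilon)_{r'}$ and you cannot just ``relabel'' to put $Z$ at $r'=0$. The fix is harmless and in the spirit of what you already wrote: embed both collars directly into the common symplectization $(Z\times\mathbb{R}_t,\,d(e^t\alpha_2))$; the $M_1$-boundary lands on the graph $\{t=\ln g(z)\}$ and the $M_2$-boundary on $\{t=0\}$, and the region between these two hypersurfaces is itself part of the symplectization, so the glued form is smooth and nondegenerate across it. With that adjustment the rest of your argument goes through unchanged.
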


\begin{proof} By Lemma \ref{gluelemma}, since $\omega_1$ is a strong convex symplectic filling of $(Z,\xi)$, there exists  $Z\times [0,c_1)$ a tubular neighborhood of $Z$ in $M_1$ on which $\omega_1= d(e^{-r_1}\alpha)$ for $r_1$ the coordinate for the interval $[0,c_1)$ and where $\ker\alpha=\xi$. Similarly, because $(M_2,\omega_2)$ is a strong concave symplectic filling, we have $Z\times [0,c_2)$ a tubular neighborhood of $Z$ in $M_2$ on which $\omega_2= d(e^{r_2}\alpha)$ for $r_2$ the coordinate of the interval $[0,c_2)$. Without loss of generality, assume $c_1=c_2=c$. 

We attach a collar neighborhood $Z\times (-c,0)_{r_1}$ to $M_1$ and a collar neighborhood $Z\times (-c,0)_{r_2}$ to $M_2$. The union $M_1\cup_Z M_2$ is formed from identifying  $Z\times (-c,c)_{r_1}$ with $Z\times (-c,c)_{r_2}$ by mapping $Z$ to itself and setting $r_1=-r_2=r$. The smooth structure on $M_1\cup_Z M_2$ is obtained from the charts on $M_1$ and $M_2$ respectively. 

We define a symplectic form on $M_1\cup_Z M_2$ by $\omega=d(e^r\alpha)$. We interpret $\omega$ to extend as $\omega_2$ into $M_2\setminus (Z\times [0,c)_{r_2})$.  Similarly, we interpret $\omega$ to extend as $\omega_1$ into $M_1\setminus (Z\times [0,c)_{r_1})$. \end{proof} 

Next, we introduce a method for gluing two strong convex symplectic fillings by using a scattering-symplectic structure.  

\begin{theorem}\label{gluingtheorem}Let $(M_1,\omega_1)$ and $(M_2,\omega_2)$ be strong convex symplectic fillings of $(Z,\xi)$. Then $\displaystyle{M_1\cup_Z M_2}$, the union of $M_1$ to $M_2$ at $Z$, has a scattering symplectic structure $\omega$ such that $\omega|_{N_1}=\omega_1|_{N_1}$ and $\omega|_{N_2}=\omega_2|_{N_2}$ where $N_i=M_i\setminus U_i$ for a tubular neighborhood $U_i$ of $Z$ in $M_i$, and the singular hypersurface of $\omega$ is $Z$. 
\end{theorem} 

\begin{proof} Begin by gluing $M_1$ and $M_2$ along $Z$. By Lemma \ref{gluelemma}, since $\omega_1$ and $\omega_2$ are both strong convex symplectic fillings of $(Z,\xi)$, there exists  a tubular neighborhood $U \cong Z\times (-\varepsilon,\varepsilon)_r$ of $Z$ in $M_1\cup_Z M_2$ where $\displaystyle{\omega_1\big|_{(Z\times (-\varepsilon,0)} = d(e^{|-r|}\alpha)}$ and where  $\omega_2\big|_{(Z\times (0,\varepsilon)} = d(e^{|-r|}\alpha)$. We define a scattering symplectic 2-form $\omega$ on $M_1\cup_Z M_2$ by $\omega = d(f(|r|)\alpha)$, where $f:(0,\infty)\to (0,\infty)$ is a smooth map satisfying $f(t) = e^{-t}$ for $t\geq \varepsilon/2$, $f(t) = 1/t^2$ for $t\geq \varepsilon/4$ and $f^\prime\leq 0$. By definition, the scattering form $\omega$ is closed. Since $f^\prime(t)\leq 0$, we also have that $\omega$ is non-degenerate. \end{proof}

\begin{remark} Because $\mathbb{D}^{2n}$ with the standard symplectic form $\omega_{st}$ is a strong convex symplectic filling of the unit sphere $\mathbb{S}^{2n-1}$ with standard contact structure $\xi_{st}$, Theorem \ref{gluingtheorem} provides an alternate way of constructing the scattering symplectic spheres described in Proposition \ref{scspheres}.

\begin{fig} $\mathbb{D}^{2n}$ glues with $\mathbb{D}^{2n}$ to yield scattering symplectic $(\mathbb{S}^{2n},\mathbb{S}^{2n-1})$. 

\begin{center}\includegraphics[scale=.27]{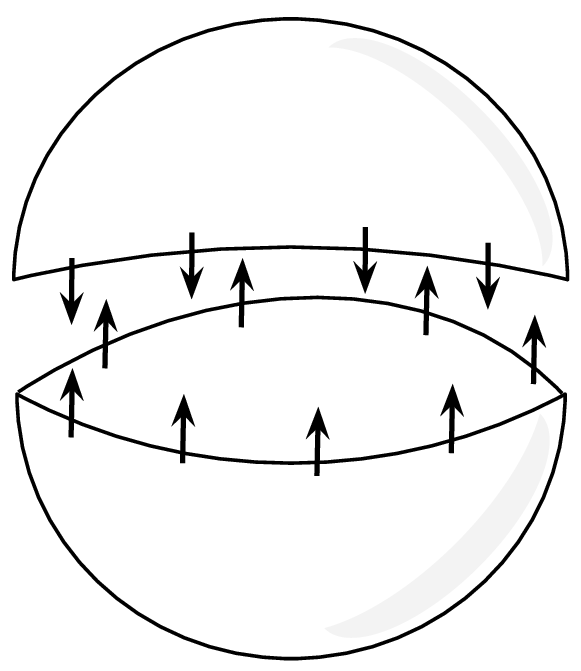}\hspace{10ex}  \includegraphics[scale=.27]{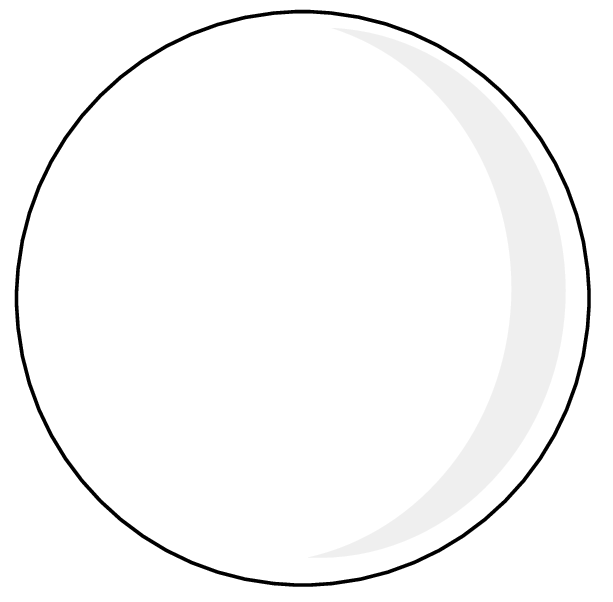}\end{center} \end{fig}

\end{remark} 

Theorem \ref{gluingtheorem} provides us with a treasure trove of additional examples, particularly in dimension 4 where constructing strong convex symplectic fillings has been an industry in its own right. For a certainly incomplete list, see \cite{eliashberg1996unique,geiges1995examples,ghiggini2005strongly}, or \cite{mcduff1991symplectic}. For the sake of brevity, we will limit our attention to a couple of examples. 

\begin{example} {\bf The pair $(\mathbb{T}^{2}\times\mathbb{S}^2,\mathbb{T}^{3})$ is scattering symplectic}.

Let $(q_1,q_2)$ be coordinates for the torus $\mathbb{T}^2$ and let $(p_1,p_2)$ be coordinates for the disk $\mathbb{D}^2$. Then $\omega=dp_1\wedge dq_1 +dp_2\wedge dq_2$ is a symplectic form on $\mathbb{T}^2\times\mathbb{D}^2$. We can rewrite this expression using polar coordinates by setting $p_1=r\cos\theta$ and $p_2=r\sin\theta$. Then $\gamma = r\cos\theta~dq_1+r\sin\theta~dq_2$ is a primitive for $\omega$ near the boundary $\partial(\mathbb{T}^2\times\mathbb{D}^2)$. At the boundary, $\gamma|_{r=1}=\cos\theta~dq_1+\sin\theta~dq_2$ is a contact form on $\mathbb{T}^2\times\mathbb{S}^1=\mathbb{T}^3$. Notice that $i_{r\partial_r}d\gamma=\gamma$ for outward pointing normal vector $r\partial_r$. 

\begin{fig}  $\mathbb{T}^2\times\mathbb{D}^2$ glues with $\mathbb{T}^2\times\mathbb{D}^2$ to yield scattering symplectic $(\mathbb{T}^{2}\times\mathbb{S}^2,\mathbb{T}^{3})$.

\begin{center}
\includegraphics[scale=.25]{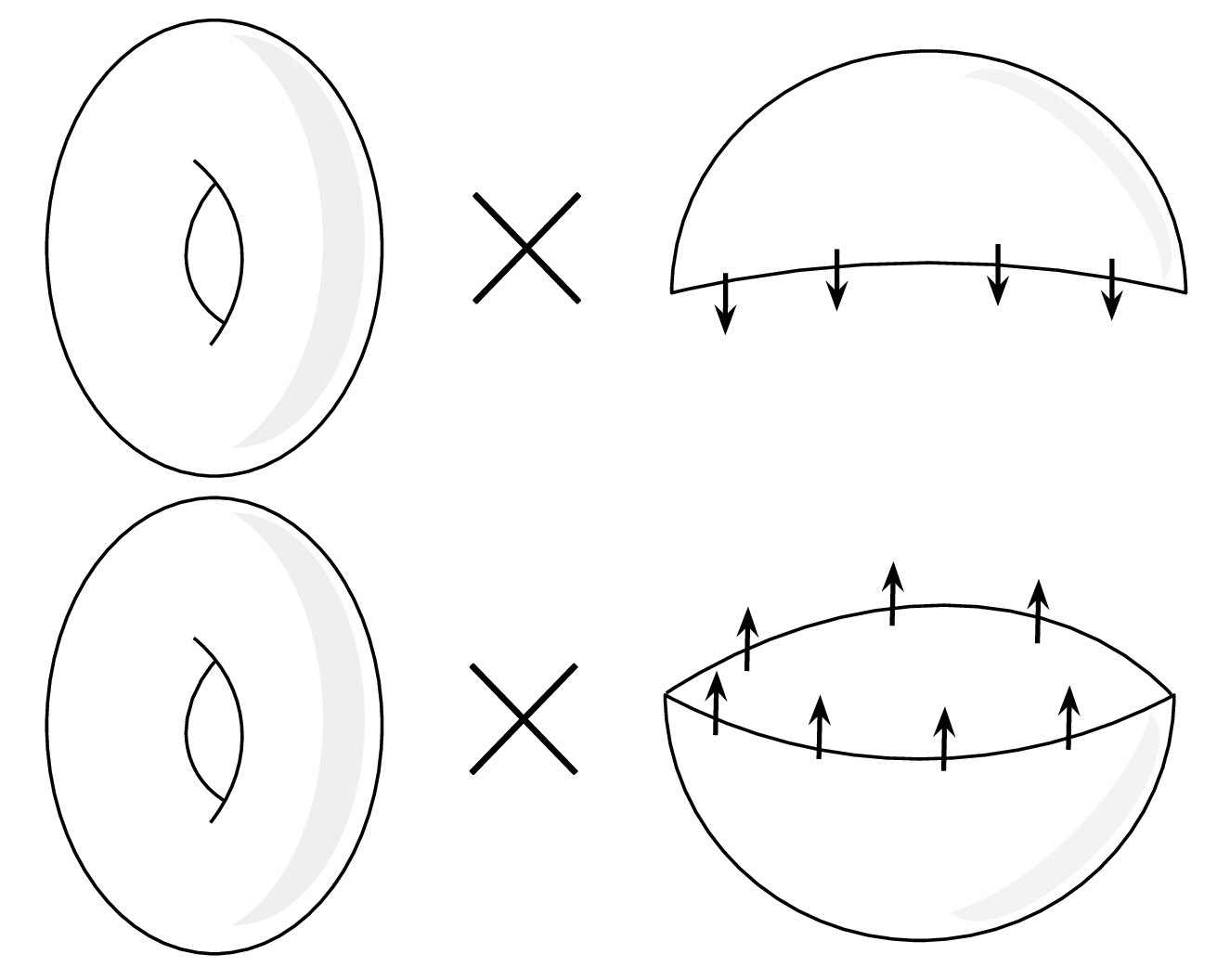}\hspace{10ex}\includegraphics[scale=.24]{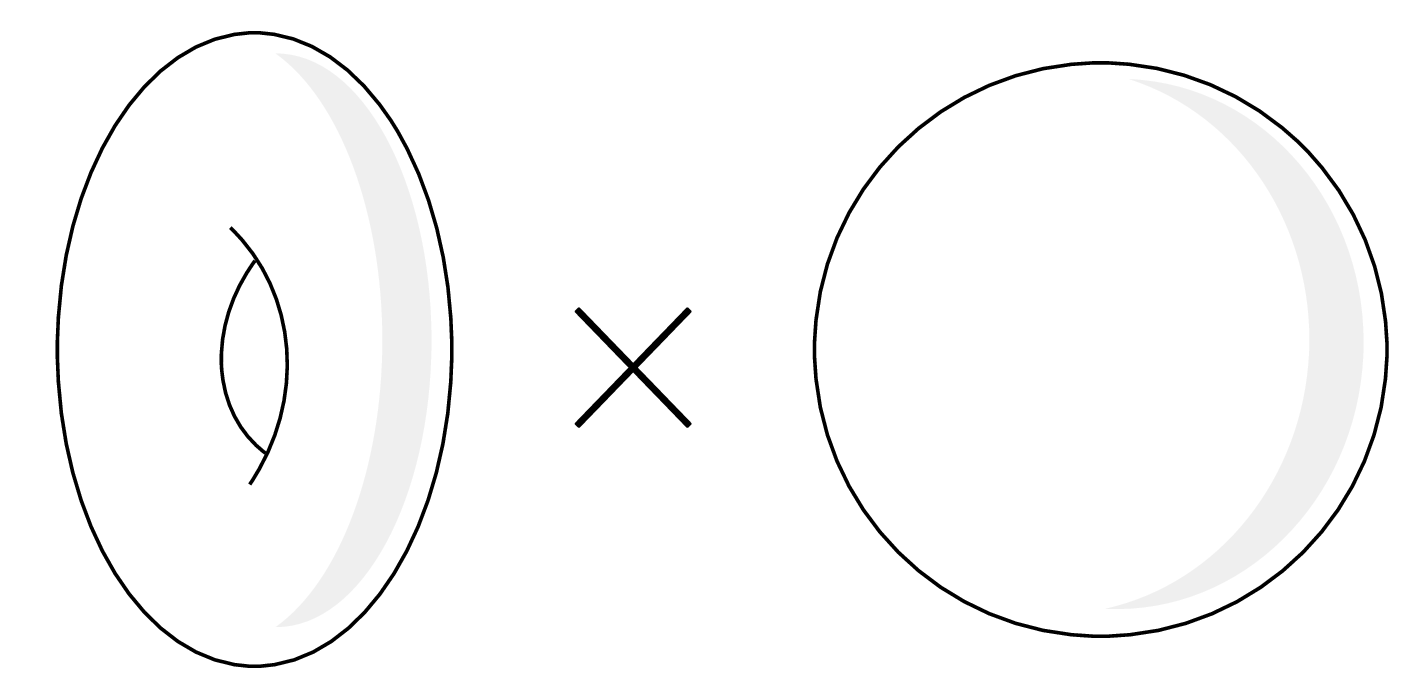}\end{center}
\end{fig}

Thus $(\mathbb{T}^2\times\mathbb{D}^2, \omega)$ is a strong convex symplectic filling of the torus $\mathbb{T}^3$ with contact structure $\gamma|_{r=1}=0$. As described in Theorem  \ref{gluingtheorem}, we construct the union of $\mathbb{T}^2\times\mathbb{D}^2$ with itself at $\partial(\mathbb{T}^2\times\mathbb{D}^2)$. Thus the pair $(\mathbb{T}^2\times\mathbb{S}^2,\mathbb{T}^3)$, where $\mathbb{T}^3$ is identified as $\mathbb{T}^2\times\mathbb{S}^1$ and the factor $\mathbb{S}^1$ is the equator of $\mathbb{S}^2$, admits a scattering symplectic structure.  

\end{example}

\begin{example} {\bf The pair $(\mathbb{S}^3\times\mathbb{S}^1,\mathbb{S}^2\times\mathbb{S}^1)$ is scattering symplectic}.  

Let $(x, y, z)$ be the standard Euclidean coordinates for $\mathbb{D}^3$ and let $\theta$ be a coordinate for $\mathbb{S}^1$. The manifold $\mathbb{D}^3\times\mathbb{S}^1$ admits the symplectic form $\omega=2dx\wedge dy +dz\wedge d\theta$. Then $\gamma = xdy-ydx+zd\theta$ is a primitive for $\omega$. 
The vector field  $R=\dfrac{x}{2}\partial_x+\dfrac{y}{2}\partial_y+z\partial_z$ is an outward pointing normal vector at the boundary $\partial(\mathbb{D}^3\times\mathbb{S}^1)$. 
Further $i_Rd\gamma=\gamma$ and $\gamma|_{\partial(\mathbb{D}^3\times\mathbb{S}^1)}$ defines a contact structure on $\mathbb{S}^2\times\mathbb{S}^1$.  Thus $(\mathbb{D}^3\times\mathbb{S}^1, \omega)$ is a strong convex symplectic filling of $\mathbb{S}^2\times\mathbb{S}^1$ with contact structure $\gamma|_{\partial(\mathbb{D}^3\times\mathbb{S}^1)}$. 

\begin{fig}  $\mathbb{D}^3\times\mathbb{S}^1$ glues with $\mathbb{D}^3\times\mathbb{S}^1$ to yield scattering symplectic $(\mathbb{S}^3\times\mathbb{S}^1,\mathbb{S}^2\times\mathbb{S}^1)$.

\begin{center}

\includegraphics[scale=.2]{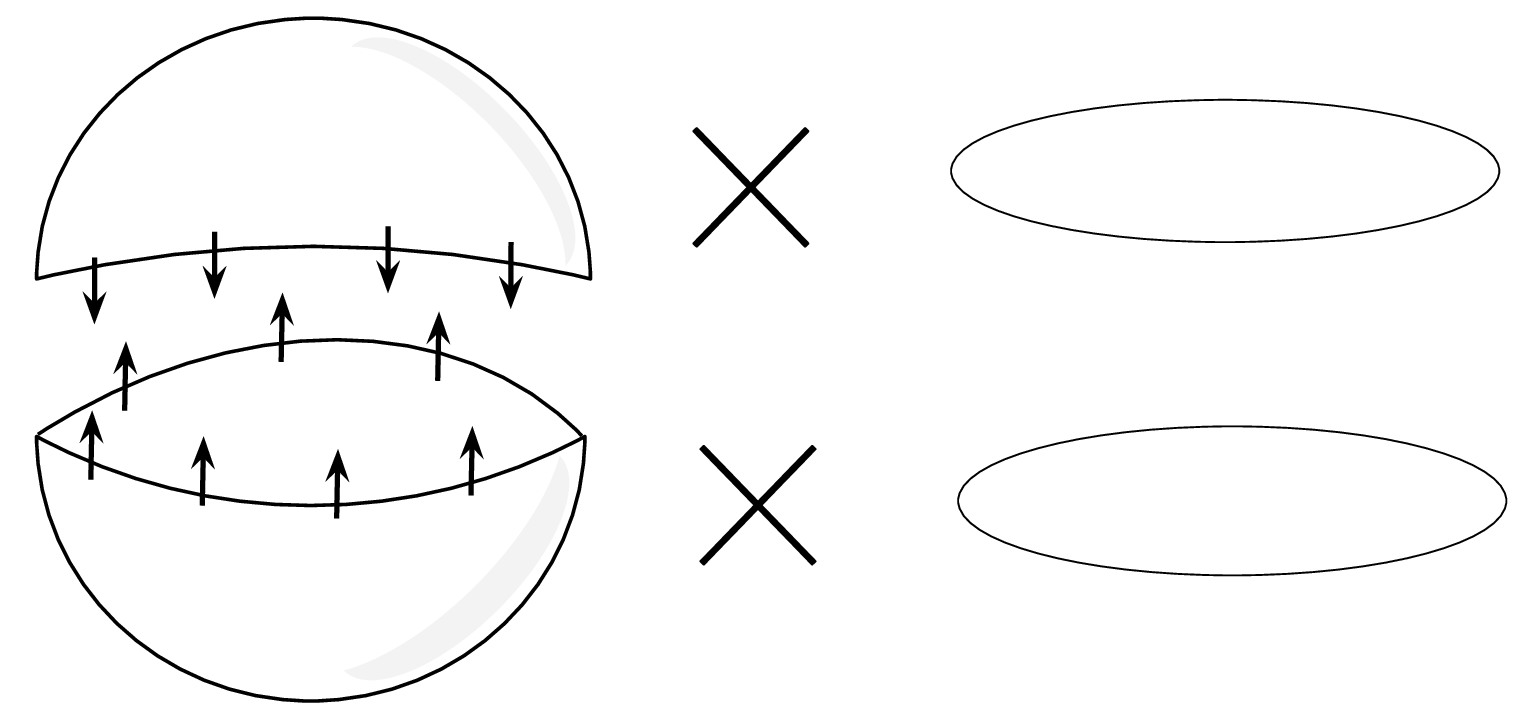}\hspace{10ex}\includegraphics[scale=.22]{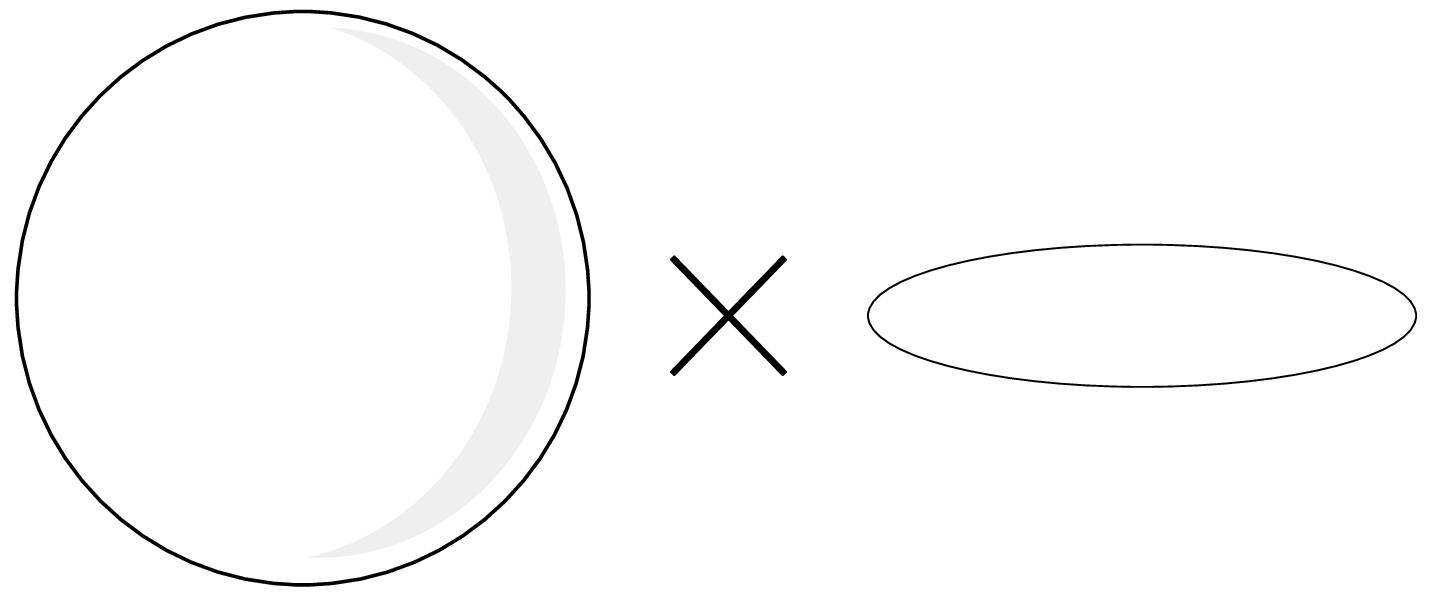}\end{center}\end{fig}

By Theorem  \ref{gluingtheorem}, we construct the union of $\mathbb{D}^3\times\mathbb{S}^1$ with itself at $\partial(\mathbb{D}^3\times\mathbb{S}^1)$. Then the pair $(\mathbb{S}^3\times\mathbb{S}^1,\mathbb{S}^2\times\mathbb{S}^1)$, where the factor $\mathbb{S}^2$ in $\mathbb{S}^2\times\mathbb{S}^1$ is identified as the equator of $\mathbb{S}^3$, admits a scattering symplectic structure.

\end{example} 
 
By expanding the symplectic category to allow scattering symplectic structures, we can overcome the obstacle preventing convex fillings from being glued to other convex fillings. In the next theorem, we show that folded symplectic structures can similarly overcome this obstacle for concave fillings, allowing a concave filling to be glued to another concave filling. 

Recall that a folded symplectic manifold $(M^{2n},Z,\omega)$ is a $2n$-dimensional manifold $M$ equipped with a closed two-form $\omega$ that is non-degenerate except on a hypersurface $Z$, called the folding hypersurface, where there exist
coordinates such that locally $Z=\left\{x_1=0\right\}$ and 
\begin{center}$\displaystyle{\omega=x_1dx_1\wedge dy_1+\sum_{i=2}^n dx_i\wedge dy_i.}$\end{center}
In section 6 of \cite{da2000unfolding}, Ana Cannas da Silva, Victor Guillemin, and Christopher Woodward prove that given any two compact oriented 2d-dimensional symplectic
manifolds $W_1$, $W_2$ with common boundary $Z$, their union over their boundary $W_1\cup_Z W_2$ admits a folded-symplectic structure. We will consider the special case when the two manifolds $W_1$ and $W_2$ are strong concave symplectic fillings of a contact boundary $(Z,\alpha)$. We will prove that $W_1\cup_Z W_2$ can be endowed with a folded-symplectic structure that preserves this strong concavity on either side of the hypersurface $Z$. 
%\newpage 
\begin{theorem}\label{foldedgluingtheorem} Let $(M_1,\omega_1)$ and $(M_2,\omega_2)$ be strong concave symplectic fillings of $(Z,\xi)$. Then $\displaystyle{M_1\cup_Z M_2}$, the union of $M_1$ to $M_2$ at $Z$, has a folded symplectic structure $\omega$ such that $\omega|_{N_1}=\omega_1|_{N_1}$ and $\omega|_{N_2}=\omega_2|_{N_2}$ where $N_i=M_i\setminus U_i$ for a tubular neighborhood $U_i$ of $Z$ in $M_i$, and the folding hypersurface of $\omega$ is $Z$.
\end{theorem} 

This construction follows from an argument of Cannas da Silva,  Guillemin, and Woodward; see Example (2), Section 3 in \cite{da2000unfolding} with the added assumptions that $\omega_1|_Z=\omega_2|_Z$ and that the orientation induced on $\ker(\omega_i|_Z)$ is the same.  

Because $\mathbb{R}^{2n}\setminus\mathbb{D}^{2n}$ with the standard symplectic form is a concave symplectic filling of $\mathbb{S}^{2n-1}$ with the standard contact form, this construction immediately gives us a folded-symplectic connect sum over the sphere with its standard contact structure. 

By combining Theorems \ref{gluingtheorem} and \ref{foldedgluingtheorem}, we can construct examples of scattering-folded symplectic manifolds: 

\begin{definition} A \emph{scattering-folded symplectic manifold} $(M,Z_{sc},Z_f,\omega)$ is a manifold $M$, with two distinct hypersurfaces $Z_{sc}$ and $Z_f$, equipped with a two form $\omega$ that is symplectic everywhere except for on a singular hypersurface $Z_{sc}$ where it is a scattering symplectic form and on a folding hypersurface $Z_f$ where it is a folded symplectic form. 
\end{definition} 

\begin{example} Recall \cite{audin2012symplectic} that a \emph{symplectic cone} is a triple $(M,\omega,X)$ of a manifold $M$ with a symplectic form $\omega$ and a vector field $X$ such that $X$ generates a proper action of the real numbers on $M$ and $\mathcal{L}_X\omega=\omega$. Any symplectic cone is of the form $B\times\mathbb{R}$ where $B$ is a co-oriented contact manifold with contact form $\alpha$. In fact, all symplectic cones can be written as $(B\times\mathbb{R},d(e^t\alpha),\frac{\partial}{\partial t})$ where $t$ is the $\mathbb{R}$ coordinate. 

Let $(B\times\mathbb{R},d(e^t\alpha),\frac{\partial}{\partial t})$ be any symplectic cone of a co-oriented contact manifold $(B,\alpha)$. We will truncate $B\times\mathbb{R}$ by considering $B\times[-k,k]$ for any real number $k>0$. 

\begin{fig} Truncated symplectic cone
\begin{center}\includegraphics[scale=.7]{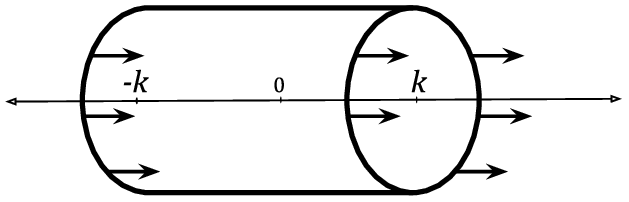}\end{center}\end{fig}

At the level set $t=k$, the vector field $\frac{\partial}{\partial t}$ is outward pointing and satisfies $i_\frac{\partial}{\partial t}\omega=\alpha$. Thus at $t=k$, we have a strong convex symplectic filling of $(B,\alpha)$. At the level set $t=-k$, the vector field $\frac{\partial}{\partial t}$ is inward pointing and satisfies $i_\frac{\partial}{\partial t}\omega=\alpha$. Thus at $t=-k$, we have a strong concave symplectic filling of $(B,\alpha)$. 

By Theorems \ref{gluingtheorem} and \ref{foldedgluingtheorem}, we can glue $B\times[-k,k]$ to a copy of itself to form the scattering-folded symplectic manifold $(B\times\mathbb{S}^1, B\times \left\{{k}\right\}, B\times\left\{{-k}\right\},\omega)$. 
\end{example} 

Our next example provides an explicit description of a scattering-folded symplectic manifold.  

\begin{example} {\bf The triple $\displaystyle{(\mathbb{T}^{2n},\cup_{2m}\mathbb{T}^{2n-1}, \cup_{2m}\mathbb{T}^{2n-1})}$ is scattering-folded symplectic for all $n,m\in\mathbb{N}$.}

In \cite{bourgeois2002odd}, Fr\'{e}d\'{e}ric Bourgeois defines a contact form $\beta$ on all odd dimensional tori $\mathbb{T}^{2n-1}$. 
We identify $\mathbb{T}^{2n}$ with $\mathbb{T}^{2n-1}\times \mathbb{S}^1$ and denote the angular coordinate on $\mathbb{S}^1$ by $\theta$. We define a scattering-folded form on $\mathbb{T}^{2n}$ by 

\centerline{$\displaystyle{\omega_m=d\left(\dfrac{\beta}{\sin^2(m\theta)}\right)=\dfrac{-2m\cos(m\theta)~d\theta\wedge\beta}{\sin^3(m\theta)} + \dfrac{d\beta}{\sin^2(m\theta)}}$} \noindent for any $m\in\mathbb{N}$. 

Then for each zero $z$ of $\sin(m\theta)$ in $[0,2\pi)$, we have a singular hypersurface $\mathbb{T}^{2n-1}\times \left\{{z}\right\}$. Since there are $2m$ zeroes of $\sin(m\theta)$ in $[0,2\pi)$, $Z_{sc}=\cup_{2m}\mathbb{T}^{2n-1}$. Similarly, for each zero $z$ of $\cos(m\theta)$ in $[0,2\pi)$, we have a folding hypersurface $\mathbb{T}^{2n-1}\times \left\{{z}\right\}$. Since there are $2m$ zeroes of $\cos(m\theta)$ in $[0,2\pi)$, $Z_{f}=\cup_{2m}\mathbb{T}^{2n-1}$. 

\begin{fig} $\displaystyle{(\mathbb{T}^{2},\cup_{4}\mathbb{S}^1}, \cup_{4}\mathbb{S}^{1})$. 

\begin{center}\hspace{5ex}\begin{minipage}{.3\linewidth}

\begin{center}\includegraphics[scale=.4]{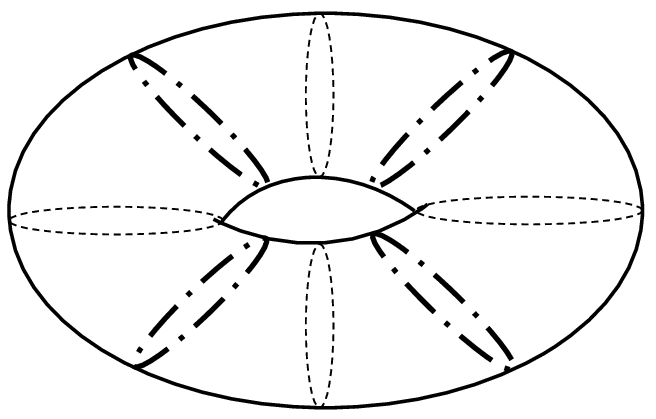}\end{center}\end{minipage}\hspace{5ex}\begin{minipage}{.15\linewidth} 

\begin{center}
folding 

singular \end{center}\end{minipage}\begin{minipage}{.3\linewidth}

\includegraphics[scale=.5]{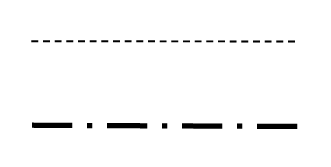}

\end{minipage}\end{center}

We equip $\mathbb{T}^{2}$ with the form 

\centerline{$d\left(\dfrac{d\theta_1}{\sin^2(2\theta_2)}\right)=\dfrac{-4\cos(2\theta_2)~d\theta_2\wedge d\theta_1}{\sin^3(2\theta_1)}.$}

The folding hypersurfaces occur at $\theta_2=\pi/4,3\pi/4, 5\pi/4, $ and $7\pi/4$. The singular hypersurfaces occur at $\theta_2=0,\pi/2, \pi,$ and $3\pi/2$. 
\end{fig} 

\end{example}

\subsection{Scattering Poisson geometry} 

A scattering-Poisson structure is dual to a scattering-symplectic structure. In this section we will explore these structures utilizing the language of Poisson geometry. 

\begin{definition} A Poisson manifold $(M,\pi)$ is \emph{scattering-Poisson} if there exists an oriented hypersurface $Z\subset M$ such that there is a bivector $\pi_{sc}\in\bigwedge^2(^{sc}TM)$ with $\rho (\pi_{sc})=\pi$. 

It is assumed that $\pi_{sc}$ is non-degenerate unless otherwise stated. 
\end{definition} 

By dualizing the scattering-symplectic form as in Proposition \ref{scdarboux}, for any non-degenerate scattering-Poisson manifold $(M,\pi)$, for all $p\in Z$ there exists a coordinate chart $(U,x_1,y_1,\dots,x_n,y_n)$ centered at $p$ such that on $U$, the hypersurface $Z$ is locally defined by $\left\{{x_1=0}\right\}$, and 

\begin{equation}\label{eq:ScPos}\displaystyle{\pi =  x_1^3\dfrac{\partial}{\partial y_1}\wedge\dfrac{\partial}{\partial x_1}+x_1^2\dfrac{\partial}{\partial y_1}\wedge\left(\sum_{i=2}^n y_i\dfrac{\partial}{\partial y_i}+x_i\dfrac{\partial}{\partial x_i}\right)+x_1^2\sum_{i=2}^n\dfrac{\partial}{\partial x_i}\wedge\dfrac{\partial}{\partial y_i}.}\end{equation}
  
 {\bf Symplectic foliations.} Every Poisson structure on a manifold induces a foliation by symplectic manifolds. The symplectic foliation for a non-degenerate scattering Poisson structure contains the open leaves $M\setminus Z$, locally given by $\left\{x<0\right\}$ and $\left\{x>0\right\}$ as depicted in Figure \ref{fig:ScFol}. The individual points of the hyperplane $Z$ are zero dimensional symplectic leaves.

\begin{fig}\label{fig:ScFol} sc-Poisson foliation

\begin{center}\includegraphics[scale=1.3]{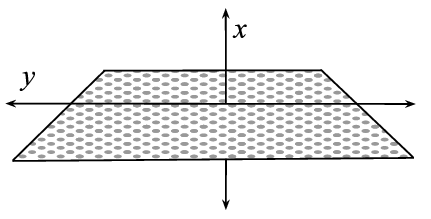}\end{center}\end{fig}

Recall from \cite{guillemin2014symplectic} and \cite{scott2016geometry}, in the case of $b$- and $b^k$- Poisson structures, the symplectic foliation contains the open leaves $\left\{x>0\right\}$ and $\left\{x<0\right\}$, and a regular codimension 1 foliation of the hypersurface $Z$ - depicted in Figure \ref{fig:BkFol} as the level sets $y=c$ for each $c\in\mathbb{R}$. 

\begin{fig} \label{fig:BkFol}  $b^k$-Poisson foliation

\begin{center}\includegraphics[scale=1.2]{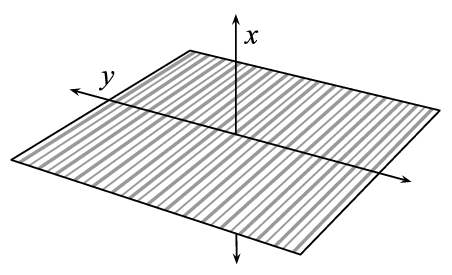}\end{center}

 \end{fig}  
  
Note that the Hamiltonian vector fields associated to a scattering-Poisson manifold are all zero at $Z$.  However, as we will show in the following discussion, $\pi$ contains information about the induced contact structure on the degeneracy hypersurface. Since contact structures are maximally non-integrable, it is fitting that the associated symplectic foliation is maximally trivial. 

%{\bf Modular vector field.} We can use the local normal form for a non-degenerate scattering Poisson bivector, given in equation (\ref{eq:ScPos}), to compute the modular vector field with respect to the standard volume form  $\Lambda = dx_1\wedge dy_1\wedge\dots\wedge dx_n\wedge dy_n$. We have that $\Lambda^\flat\pi=$

%\centerline{$\displaystyle{ -x_1^3dx_2\wedge dy_2\wedge \dots \wedge dx_n\wedge dy_n-\sum_{i=2}^n x_1^2y_i dx_1\wedge dx_2\wedge dy_2\wedge\dots  \widehat{dy_i}\dots\wedge dx_n\wedge dy_n}$}

%\centerline{$\displaystyle{+ x_1^2\sum_{i=2}^n (x_i dx_1\wedge dx_2\wedge dy_2\wedge\dots  \widehat{dx_i}\dots\wedge dy_n+dx_1\wedge dy_1\wedge \dots \widehat{dx_i}\wedge \widehat{dy_i} \dots\wedge dx_n\wedge dy_n)}$.}

%\noindent Next we compute that $d(\Lambda^\flat\pi) =-(2n-1)x_1^2dx_1\wedge dx_2\wedge dy_2\wedge \dots \wedge dx_n\wedge dy_n$.  Thus 

%\centerline{$D_\pi\Lambda = (2n-1)x_1^2\dfrac{\partial}{\partial y_1}$.} 

{\bf Poisson cohomology.} Consider a non-degenerate scattering Poisson manifold $(M,Z,\pi)$  and let $\omega$ be the corresponding scattering-symplectic form. We will now give an explicit description of the scattering-rigged algebroid. Given a choice of local $Z$ defining function $x$, $\omega$ induces a contact form $\alpha$ on $Z$. 

For local coordinates $(y_1,x_2,y_2,\dots x_m,y_m)$, in $Z$ such that $\displaystyle{\alpha = dy_1+\sum_{i=2}^m x_idy_i}$, the local sections of $\mathcal{R}$ are smooth linear combinations of \begin{equation}\label{eq:ScRig} x_1^3\dfrac{\partial}{\partial x_1}, x_1^3\dfrac{\partial}{\partial y_1},x_1^2\dfrac{\partial}{\partial x_2},\dots,x_1^2\dfrac{\partial}{\partial x_m}, x_1^2\left(x_2\dfrac{\partial}{\partial y_1}-\dfrac{\partial}{\partial y_2}\right),\dots,x_1^2\left(x_m\dfrac{\partial}{\partial y_1}-\dfrac{\partial}{\partial y_m}\right).\end{equation} \noindent Notice that we can locally identify $\Gamma(\mathcal{R}^*)$ as the span of \vspace{2ex}

\centerline{$\dfrac{dx_1}{x_1^3},~~~~~ \dfrac{\alpha}{x_1^3},~~~~~~~~~\dfrac{dy_2}{x_1^2},\dots,\dfrac{dy_m}{x_1^2},~~~~~~~~~\dfrac{dx_2}{x_1^2},\dots, \dfrac{dx_m}{x_1^2}.$}\vspace{3ex}

Further, by Lemma \ref{RiggedLemma}, the Poisson cohomology of a non-degenerate scattering-Poisson manifold $(M,Z,\pi)$ is isomorphic to the Lie algebroid cohomology $^{\mathcal{R}}H^*(M)$ of the scattering rigged algebroid $\mathcal{R}$ as locally identified in equation (\ref{eq:ScRig}).

Given a  $2n$-dimensional non-degenerate scattering Poisson manifold $(M,Z,\pi)$, let $\xi$ denote the contact distribution on $Z$ induced by $\pi$. Let $\alpha$ be a contact one form such that $\ker\alpha =\xi$.  We will use the space of differential forms $\Omega_\xi^{k}(Z)$ defined as  

\centerline{$\Omega_\xi^{k}(Z):=\alpha\wedge \Omega^{k}(Z) \subset \Omega^{k+1}(Z)$}
 
Note that any other choice of contact one-form $\widetilde{\alpha}$ will give us the same space $\Omega_\xi^{k}(Z)$ of forms because contact one forms are conformally equivalent, i.e. $\widetilde{\alpha} = f \alpha$ for a non-zero function $f\in\mathcal{C}^\infty(Z)$.

We will also consider the forms 

\centerline{$\mathcal{K}^{k}:=\ker(d\alpha\wedge:\Omega_\xi^{k}(Z)\to\Omega_\xi^{k+2}(Z))$.}  

Note that this module is independent of the choice of $\alpha$ because any other choice of contact form will give a symplectic structure conformal to $d\alpha$ on $\xi$. We adopt the convention that $\mathcal{K}^{k}=0$ for $k\leq 0$.

\begin{theorem}\label{scpoissoncohom} Given $(M,Z,\pi)$  a $2n$-dimensional non-degenerate scattering Poisson manifold, let $\xi$ denote the contact distribution on $Z$ induced by $\pi$. The Poisson cohomology $H^p_\pi(M)$ of $(M,Z,\pi)$ is 

\centerline{$ H^{p}(M)\oplus H^{p-1}(Z)\oplus\mathcal{J}_Z^1({}^\mathcal{R}\Omega_{cl}^p(M)).$} \noindent
Given a fixed local $Z$ defining function $x$, 

\centerline{$H^p_\pi(M)\simeq H^{p}(M)\oplus H^{p-1}(Z)\oplus\Omega^{p-1}(Z)\oplus\Omega_\xi^{p-1}(Z)\oplus \mathcal{K}^{p-2}.$} \end{theorem}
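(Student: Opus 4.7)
By Lemma \ref{PoissonResolveLem}, computing $H^p_\pi(M)$ reduces to computing the rigged de Rham cohomology ${}^\mathcal{R}H^p(M)$. My plan is to parallel the proof of Theorem \ref{scderham}: exhibit ${}^b\Omega^*(M)$ as a sub-complex of ${}^\mathcal{R}\Omega^*(M)$, split off the ``singular'' part, and compute each piece.

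First I will verify the inclusion ${}^b\Omega^*(M) \hookrightarrow {}^\mathcal{R}\Omega^*(M)$: using the local frame $\{dx/x^3, \alpha/x^3, ds_i/x^2, dt_i/x^2\}$ for $\mathcal{R}^*$ described before Lemma \ref{PoissonResolveLem}, each $b$-form basis element is a smooth $\mathcal{C}^\infty(M)$-combination of these. Fixing a tubular neighborhood $\tau = Z \times (-\varepsilon,\varepsilon)_x$, I will define a splitting
$${}^\mathcal{R}\Omega^*(M) \simeq {}^b\Omega^*(M) \oplus \mathcal{Q}^*$$
at the level of complexes, where $\mathcal{Q}^*$ captures the contributions of a rigged form involving the $1/x^3$-singular basis elements $dx/x^3$ and $\alpha/x^3$, together with the extra $1/x$-singularity in $ds_i/x^2$ and $dt_i/x^2$ relative to ${}^b\Omega^*$. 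The key verification will be that the rigged differential preserves this splitting. Granted this, ${}^\mathcal{R}H^p(M) \simeq {}^bH^p(M) \oplus H^p(\mathcal{Q}^*)$, and Mazzeo-Melrose gives the first two summands $H^p(M) \oplus H^{p-1}(Z)$.

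The bulk of the work will be identifying $H^p(\mathcal{Q}^*)$. I will Taylor-expand a singular rigged form in $x$ near $Z$ and examine closedness order by order. The 0-jet has two components: the coefficient of $dx/x^3$ is an arbitrary smooth $(p-1)$-form on $Z$, contributing $\Omega^{p-1}(Z)$; the coefficient of $\alpha/x^3$ can be taken horizontal (since $\alpha\wedge\alpha=0$ lets us discard any multiple of $\alpha$), contributing $\Omega^{p-1}_\xi(Z)$. The first-order-in-$x$ Taylor coefficient will be constrained by closedness to satisfy $d\alpha\wedge(\cdot)=0$, giving the $\mathcal{K}^{p-2}$ summand, while higher Taylor coefficients should be killable by explicit primitives in $\mathcal{Q}^*$, in analogy with the image calculation in the proof of Theorem \ref{scderham}.

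The invariant statement will follow by noting that a change of defining function $\widetilde{x}=fx$ rescales $\alpha$ conformally and mixes the three singular summands, yet their combined data is precisely the 1-jet at $Z$ of a closed rigged $p$-form modulo those arising from exact forms, i.e.\ $\mathcal{J}^1_Z({}^\mathcal{R}\Omega^p_{cl}(M))$. The main obstacle will be extracting the $\mathcal{K}^{p-2}$ summand: tracking the first-order Taylor coefficient and showing that closedness at that order reduces exactly to the $\ker(d\alpha\wedge)$ condition is the delicate new step beyond the scattering de Rham calculation, since it requires recognizing a cancellation between the $d$ of a first-order $\alpha/x^3$-coefficient and the 0-jet $dx/x^3$-coefficient at one differential-form degree higher.
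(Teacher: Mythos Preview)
Your overall strategy matches the paper's: reduce to ${}^\mathcal{R}H^*(M)$ via Lemma~\ref{PoissonResolveLem}, split off the $b$-subcomplex using a tubular neighborhood, and compute the quotient by Taylor expansion in $x$. However, your identification of which Taylor coefficients produce which summands is off, and if you carried out your plan as written you would find the pieces landing in different places than you predict.

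The $\mathcal{K}^{p-2}$ summand does \emph{not} come from a first-order coefficient. The most singular term of a rigged $p$-form is $\dfrac{dx\wedge\alpha\wedge\theta}{x^{2p+2}}$ with $\theta\in\Omega_\xi^{p-2}(Z)$; its differential contributes $-\dfrac{dx\wedge d\alpha\wedge\theta}{x^{2p+2}}$, and nothing else lives at that singularity order to cancel it, so closedness forces $d\alpha\wedge\theta=0$ already at the \emph{zeroth} jet. Conversely, the 0-jet coefficient of $\dfrac{\alpha}{x^{2p+1}}$ (what you call the ``coefficient of $\alpha/x^3$'') is \emph{not} free: closedness determines it as $\gamma=\dfrac{d\theta-\alpha\wedge i_R d\theta}{2p+1}$ in terms of $\theta$, so it does not furnish an independent $\Omega_\xi^{p-1}(Z)$. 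That summand instead arises from $\delta_1$, the horizontal part of the \emph{first-order} coefficient of $\dfrac{dx}{x^{2p+1}}$, which survives in cohomology after the $\alpha$-part $\alpha\wedge\gamma_1$ at that order is killed by the explicit primitive $\dfrac{-\alpha\wedge\gamma_1}{(2p-1)x^{2p-1}}$. Your ``main obstacle'' paragraph is therefore aimed at the wrong order of the expansion; the delicate step is recognizing that the most singular $\dfrac{dx\wedge\alpha}{x^{2p+2}}$ coefficient carries the $\ker(d\alpha\wedge)$ constraint, while the extra $\Omega_\xi^{p-1}(Z)$ is a genuinely first-order survivor.
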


\begin{remark} Given a contact hypersurface $(Z,\alpha)$ of dimension $2n+1$, the map $d\alpha\wedge:\Omega_\xi^{k}(Z)\to\Omega_\xi^{k+2}(Z)$, by a local computation at a point, is injective  for $k=0,\dots,n-1$ and is identically equal to zero for $k=2n, 2n+1$. Thus $\mathcal{K}^{k}=0$ for $k=0,\dots,n-1$ and $\mathcal{K}^{k}=\Omega_\xi^{k}(Z)$ for $k=2n, 2n+1$. 
\end{remark}

\begin{proof} We are left to compute ${}^\mathcal{R}H^p(M)$. We have a short exact sequence \begin{center}$0\to {}^b\Omega^{k}(M)\xrightarrow{i^*}{}^\mathcal{R}\Omega^{k}(M)\xrightarrow{P} \mathscr{C}^k\to 0$ \end{center} where 

\centerline{$\mathscr{C}^p={^\mathcal{R}\Omega^{p}(M)}/{^b\Omega^{p}(M)}$} \noindent is the quotient under the evaluation bundle map $i: {}^\mathcal{R}TM\to {}^bTM$. We have a differential ${}^\mathscr{C}d$ induced by the differential $^\mathcal{R}d$ on ${}^\mathcal{R}\Omega^{p}(M)$. In particular, if $P$ is the projection $^\mathcal{R}\Omega^{p}(M)\to{}^\mathcal{R}\Omega^{p}(M)/{}^b\Omega^{p}(M),$ then  $^{\mathscr{C}}d(\eta)=P(^\mathcal{R}d(\theta))$ where  $\theta\in{^\mathcal{R}\Omega^{p}(M)}$ is any form such that $P(\theta)=\eta$. Hence $(^{\mathscr{C}}d)^2=0$ and $(\mathscr{C}^{*},{^{\mathscr{C}}d})$ is in fact a complex.  

Given a tubular neighborhood $\tau=Z\times(-\varepsilon,\varepsilon)_x$ of $M$ near $Z$, we can write a degree-$k$ form $\nu$ in $^\mathcal{R}\Omega^k(M)$ as an expansion in $x$ of the form

\centerline{$\displaystyle{\nu = \mu_b+ \dfrac{dx}{x^{2k+1}}\wedge(\sum_{i=0}^{2k-1}\eta_ix^i)+\dfrac{1}{x^{2k}}\sum_{i=0}^{2k-1}\beta_ix^i+\dfrac{dx\wedge\alpha\wedge\theta}{x^{2k+2}} + \dfrac{\alpha\wedge\gamma}{x^{2k+1}},}$} \noindent $\mu_b$ is a b-form, $\alpha$ is the contact form on $Z$ induced by $\omega$ and $x$, $\eta_i\in\Omega^{k-1}(Z)$, $\beta_i\in\Omega^{k}(Z)$, $\alpha\wedge\theta\in\Omega^{k-1}(Z)$, and $\alpha\wedge\gamma\in\Omega^{k}(Z)$.

We write $R_b(\nu)=\mu_b$ and $S_b(\nu)=\nu-R_b(\nu)$ for `regular' and `singular' parts. It is easy to see that $R_b({}^{\mathcal{R}}d\nu)={}^{\mathcal{R}}d(R_b(\nu))$ and $S_b({}^{\mathcal{R}}d\nu)={}^{\mathcal{R}}d(S_b(\nu))$. Thus the trivialization $\tau$ induces a splitting ${}^{\mathcal{R}}\Omega^*(M)={}^b\Omega^*(M)\oplus\mathscr{C}^*$ as complexes. As a consequence ${}^{\mathcal{R}}H^k(M)={}^bH^k(M)\oplus  H^k(\mathscr{C}^{*})$ and we are left to compute the cohomology of the quotient complex.  

We have that 

\centerline{$\displaystyle{{}^{\mathcal{R}}d(S_b(\nu))=-\sum_{i=0}^{2k-1}\dfrac{dx}{x^{2k+1}}\wedge d\eta_ix^i- \sum_{i=0}^{2k-1}\frac{(2k-i)dx}{x^{2k+1}}\wedge\beta_ix^i+\sum_{i=0}^{2k-1}\dfrac{d\beta_i}{x^{2k}}x^i}$}

\centerline{$\displaystyle{-\dfrac{dx\wedge d\alpha\wedge\theta}{x^{2k+2}}+ \dfrac{dx\wedge \alpha\wedge d\theta}{x^{2k+2}} -\dfrac{2k+1}{x^{2k+2}}dx\wedge \alpha\wedge\gamma + \dfrac{d\alpha\wedge\gamma}{x^{2k+1}}-\dfrac{\alpha\wedge d\gamma}{x^{2k+1}}.}$}

Thus the kernel $^{\mathscr{C}}d:\mathscr{C}^k\to\mathscr{C}^{k+1}$ is defined by the relation \begin{center}$-d\eta_ix^{i+1}-(2k-i)\beta_ix^{i+1}-d\alpha\wedge\theta+\alpha\wedge d\theta -(2k+1)\alpha\wedge\gamma=0.$ \end{center} 

In order for the expression to be zero, the coefficients of the polynomial must be zero and thus \centerline{$\beta_i=\dfrac{-d\eta_i}{(2k-i)}$} for $i=0,\dots,2k-1$. 

Now we consider the kernel relation given by the coefficient 

\centerline{$-d\alpha\wedge\theta+\alpha\wedge d\theta-(2k+1)\alpha\wedge\gamma=0.$} \noindent By contracting with $R$, the Reeb vector field associated to $\alpha$, we recover 

\centerline{$d\theta-\alpha\wedge i_R d\theta-(2k+1)\gamma =0.$} \noindent Thus 

\centerline{$\gamma = \dfrac{d\theta-\alpha\wedge i_R d\theta}{(2k+1)}.$} \noindent Substituting this into the original expression, we have that 

\centerline{$-d\alpha\wedge\theta+\alpha\wedge d\theta-(2k+1)\alpha\wedge\dfrac{d\theta}{(2k+1)}=0$} \noindent since $\alpha^2=0$. Thus $d\alpha\wedge\theta=0$. 

Thus all closed forms in $\mathscr{C}^{k}$ are of the form 

\centerline{$\displaystyle{ \dfrac{dx}{x^{2k+1}}\wedge(\sum_{i=0}^{2k-1}\eta_ix^i)+\dfrac{1}{x^{2k}}\sum_{i=0}^{2k-1}\dfrac{-d\eta_i}{(2k-i)}x^i+\dfrac{dx\wedge\alpha\wedge\theta}{x^{2k+2}} + \dfrac{\alpha}{x^{2k+1}}\wedge\dfrac{(d\theta-\alpha\wedge i_R d\theta)}{(2k+1)},}$} \noindent where $\theta\in\ker (d\alpha\wedge:\Omega_\xi^{k-2}(Z)\to\Omega_\xi^{k}(Z))$.  

Elements in ${}^\mathscr{C}d(\mathscr{C}^{k-1})$ are of the form 

 \begin{equation}\label{eq:scriggedcohom} -\sum_{i=0}^{2k-3}\dfrac{dx}{x^{2k-1}}\wedge d\eta_ix^i- \sum_{i=0}^{2k-3}\frac{(2k-2-i)dx}{x^{2k-1}}\wedge\beta_ix^i+\sum_{i=0}^{2k-3}\dfrac{d\beta_i}{x^{2k-2}}x^i \end{equation}  
 
\centerline{$-\dfrac{dx\wedge d\alpha\wedge\theta}{x^{2k}}+ \dfrac{dx\wedge \alpha\wedge d\theta}{x^{2k}} -\dfrac{2k-1}{x^{2k}}dx\wedge \alpha\wedge\gamma + \dfrac{d\alpha\wedge\gamma}{x^{2k-1}}-\dfrac{\alpha\wedge d\gamma}{x^{2k-1}}.$}

\noindent Thus there is the element 

\centerline{$\displaystyle{\sum_{j=2}^{2k-1}\dfrac{-x^{j-2}\eta_{j}}{(2k-j)x^{2k-2}}}$} \noindent in $\mathscr{C}^{k-1}$ such that 

\centerline{$\displaystyle{d\left(\sum_{j=2}^{2k-1}\dfrac{-x^{j-2}\eta_{j}}{(2k-j)x^{2k-2}}\right)=\sum_{j=2}^{2k-1}\left(\dfrac{dx}{x^{2k+1}}\wedge\eta_{j}x^{j}-\dfrac{d\eta_{j}x^{j}}{(2k-j)x^{2k}} \right).}$} \noindent If we express $\eta_i=\delta_i+\alpha\wedge\gamma_i$ for $\delta_i,\gamma_i\in\Omega_\xi^k(Z),$ then there is the element 

\centerline{$\dfrac{-\alpha\wedge\gamma_1}{(2k-1)x^{2k-1}}$} \noindent in $\mathscr{C}^{k-1}$ such that 

\centerline{$d\left(\dfrac{-\alpha\wedge\gamma_1}{(2k-1)x^{2k-1}}\right)= \dfrac{dx\wedge\alpha\wedge\gamma_1x}{x^{2k+1}}-\dfrac{d(\alpha\wedge\gamma_1)x}{(2k-1)x^{2k}}.$}

By (\ref{eq:scriggedcohom}), the remaining terms in a closed form in $\mathscr{C}^k$ are too singular to appear in the image $d(\mathscr{C}^{k-1})$. Thus an element of $H^k(\mathscr{C})$ has a representative of the form  

\centerline{$\displaystyle{\nu=\dfrac{dx}{x^{2k+1}}\wedge(\delta_0+\alpha\wedge\gamma_0)+\dfrac{dx}{x^{2k+1}}\wedge x\delta_1+ \dfrac{dx}{x^{2k+2}}\wedge\alpha\wedge\theta}$} 

\centerline{$\displaystyle{-\dfrac{d(\delta_0+\alpha\wedge\gamma_0)}{(2k)x^{2k}}-\dfrac{d\delta_1}{(2k-1)x^{2k-1}}-\dfrac{d(\alpha\wedge\theta)}{(2k+1)x^{2k+1}}}$} \noindent  where $\delta_0, \gamma_0,\delta_1\in\Omega_\xi^{*}(Z)$ and $\theta\in\mathcal{K}^{k-2}=\ker(d\alpha\wedge:\Omega^{k-2}_\xi(Z)\to\Omega^{k}_\xi(Z))$ and each such form represents a separate cohomology class. Thus for a fixed local $Z$ defining function $x$, by the map  

\centerline{$\nu\mapsto ( \delta_0+\alpha\wedge\gamma_0, \delta_1,\theta ),$} 

\centerline{$H^k(\mathscr{C})\simeq\Omega^{k-1}(Z)\oplus\Omega_\xi^{k-1}(Z)\oplus \mathcal{K}^{k-2}.$}

To conclude, we consider what happens under change of local $Z$ defining function $x$. Note that $\nu$ is completely determined by $i_{\partial x}\nu$. Thus, to account for change in defining function for the contribution of terms $$\dfrac{dx}{x^{2k+1}}\wedge(\delta_0+\alpha\wedge\gamma_0),$$ we tensor $\Omega^{k-1}(Z)$ with $\Gamma((N^\ast Z)^{-2k})$ over smooth real valued functions on $Z$: $\Omega^{k-1}(Z; (N^\ast Z)^{-2k})$. Similarly, the contribution of terms 
$\dfrac{dx}{x^{2k+1}}\wedge x\delta_1$ is $\Omega^{k-1}_{\xi} (Z; (N^\ast Z)^{-2k+1})$ and the contribution of terms $\dfrac{dx}{x^{2k+2}}\wedge\alpha\wedge\theta$ is $\mathcal{K}^{k-2}\otimes_{\mathcal{C}^\infty(Z)} \Gamma((N^\ast Z)^{-2k-1})$.

We have shown that the Poisson cohomology $H^k_\pi(M)$ of $(M,Z,\pi)$ is 

\centerline{${}^{b}H^k(M)\oplus\Omega^{k-1}(Z; (N^\ast Z)^{-2k})\oplus \Omega^{k-1}_{\xi} (Z; (N^\ast Z)^{-2k+1}) \oplus \mathcal{K}^{k-2}\otimes_{\mathcal{C}^\infty(Z)} \Gamma((N^\ast Z)^{-2k-1})$} \noindent and, given a fixed local $Z$ defining function $x$, is 

\centerline{${}^{b}H^k(M)\oplus \Omega^{k-1}(Z)\oplus\Omega_\xi^{k-1}(Z)\oplus \mathcal{K}^{k-2}.$} \noindent The final isomorphism is due to Mazzeo-Melrose that $^{b}H^k(M)\simeq H^k(M)\oplus H^{k-1}(Z)$. \end{proof}

\bibliographystyle{plain}
\bibliography{reference}

\begin{thebibliography}{10}

\bibitem{AndroulidakisZambon17}
Iakovos Androulidakis and Marco Zambon.
\newblock Almost regular poisson manifolds and their holonomy groupoids.
\newblock {\em Selecta Mathematica}, pages 1--40, 2017.

\bibitem{audin2012symplectic}
Mich{\`e}le Audin, Ana~Cannas da~Silva, and Eugene Lerman.
\newblock {\em Symplectic geometry of integrable Hamiltonian systems}.
\newblock Birkh{\"a}user, 2012.

\bibitem{bourgeois2002odd}
Fr{\'e}d{\'e}ric Bourgeois.
\newblock Odd dimensional tori are contact manifolds.
\newblock {\em International Mathematics Research Notices},
  2002(30):1571--1574, 2002.

\bibitem{crainic2003integrability}
Marius Crainic and Rui~Loja Fernandes.
\newblock Integrability of lie brackets.
\newblock {\em Annals of Mathematics}, pages 575--620, 2003.

\bibitem{da2000unfolding}
A~Cannas da~Silva, Victor Guillemin, and Christopher Woodward.
\newblock On the unfolding of folded symplectic structures.
\newblock {\em Mathematical Research Letters}, 7(1):35--54, 2000.

\bibitem{da2001lectures}
Ana~Cannas Da~Silva.
\newblock {\em Lectures on symplectic geometry}, volume 3575.
\newblock Springer, 2001.

\bibitem{DufourZung05}
Jean-Paul Dufour and Nguyen~Tien Zung.
\newblock {\em Poisson Structures and Their Normal Forms}, volume 242 of {\em
  Progress in Mathematics}.
\newblock Birkhauser Verlag, Boston, 2005.

\bibitem{eliashberg1996unique}
Yasha Eliashberg.
\newblock Unique holomorphically fillable contact structure on the 3-torus.
\newblock {\em International Mathematics Research Notices}, 1996(2):77--82,
  1996.

\bibitem{geiges1995examples}
Hansj{\"o}rg Geiges.
\newblock Examples of symplectic 4-manifolds with disconnected boundary of
  contact type.
\newblock {\em Bulletin of the London Mathematical Society}, 27(3):278--280,
  1995.

\bibitem{ghiggini2005strongly}
Paolo Ghiggini et~al.
\newblock Strongly fillable contact 3--manifolds without stein fillings.
\newblock {\em Geometry \& Topology}, 9(3):1677--1687, 2005.

\bibitem{gualtieri2013symplectic}
Marco Gualtieri and Songhao Li.
\newblock Symplectic groupoids of log symplectic manifolds.
\newblock {\em International Mathematics Research Notices},
  2014(11):3022--3074, 2013.

\bibitem{guillemin2014symplectic}
Victor Guillemin, Eva Miranda, and Ana~Rita Pires.
\newblock Symplectic and {P}oisson geometry on b-manifolds.
\newblock {\em Advances in mathematics}, 264:864--896, 2014.

\bibitem{Klaasse17}
Ralph~L. Klaasse.
\newblock {\em Geometric structures and Lie algebroids}.
\newblock PhD thesis, Utrecht University, 2017.

\bibitem{li2013constructions}
Travis~Songhao Li.
\newblock {\em Constructions of Lie groupoids}.
\newblock PhD thesis, 2013.

\bibitem{muarcuct2014deformations}
Ioan M{\u{a}}rcu{\c{t}} and Boris Osorno~Torres.
\newblock Deformations of log-symplectic structures.
\newblock {\em Journal of the London Mathematical Society}, 90(1):197--212,
  2014.

\bibitem{muarcuct2014cohomological}
Ioan M{\u{a}}rcu{\c{t}} and Boris~Osorno Torres.
\newblock On cohomological obstructions for the existence of log-symplectic
  structures.
\newblock {\em Journal of Symplectic Geometry}, 12(4):863--866, 2014.

\bibitem{mazzeo1988hodge}
Rafe Mazzeo et~al.
\newblock The hodge cohomology of a conformally compact metric.
\newblock {\em J. Differential Geom}, 28(2):309--339, 1988.

\bibitem{mazzeo1987meromorphic}
Rafe~R Mazzeo and Richard~B Melrose.
\newblock Meromorphic extension of the resolvent on complete spaces with
  asymptotically constant negative curvature.
\newblock {\em Journal of Functional analysis}, 75(2):260--310, 1987.

\bibitem{mcduff1991symplectic}
Dusa McDuff.
\newblock Symplectic manifolds with contact type boundaries.
\newblock {\em Inventiones mathematicae}, 103(1):651--671, 1991.

\bibitem{melrose1981equivalence}
RB~Melrose.
\newblock Equivalence of glancing hypersurfaces. ii.
\newblock {\em Mathematische Annalen}, 255(2):159--198, 1981.

\bibitem{Melrose93}
Richard~B. Melrose.
\newblock {\em The {A}tiyah-{P}atodi-{S}inger index theorem}, volume~4 of {\em
  Research Notes in Mathematics}.
\newblock A K Peters, Ltd., Wellesley, MA, 1993.

\bibitem{Melrose95}
Richard~B. Melrose.
\newblock {\em Geometric scattering theory}.
\newblock Stanford Lectures. Cambridge University Press, Cambridge, 1995.

\bibitem{mrowka2009low}
Tomasz Mrowka and Peter~Steven Ozsv{\'a}th.
\newblock {\em Low dimensional topology}, volume~15.
\newblock American Mathematical Soc., 2009.

\bibitem{nest1996formal}
Ryszard Nest and Boris Tsygan.
\newblock Formal deformations of symplectic manifolds with boundary.
\newblock {\em Journal fur die Reine und Angewandte Mathematik}, 481:27--54,
  1996.

\bibitem{scott2016geometry}
Geoffrey Scott.
\newblock The geometry of $ b^{k}$ manifolds.
\newblock {\em Journal of Symplectic Geometry}, 14(1):71--95, 2016.

\end{thebibliography}

\end{document}